\documentclass[11pt]{article}
\usepackage{amssymb,amsmath,accents}
\usepackage{amscd}
\usepackage{amsfonts,amsthm,mathrsfs}
\usepackage{setspace}
\usepackage{cases,empheq}
\usepackage{subcaption}
\usepackage{diagbox}
\usepackage[hyphens,allowmove]{url}
\usepackage{hyperref}
\usepackage{cleveref}
\usepackage{float}

\numberwithin{equation}{section}
\newtheorem{thm}{Theorem}
\newtheorem{cor}[thm]{Corollary}
\newtheorem{lem}[thm]{Lemma}
\newtheorem{prop}[thm]{Proposition}

\newtheorem{definition}[thm]{Definition}
\newtheorem{rem}[thm]{Remark}

\begin{document}

\title{On existence of a collapsed bubble with surface tension \\ in viscous incompressible fluid}
\bigskip
\author{Yoshikazu Giga$^1$ and Zhongyang Gu$^2$ \vspace{0.7em}\\ 
\centerline{{\small $^1$Graduate School of Mathematical Sciences}} \\
\centerline{{\small The University of Tokyo}} \\
\centerline{{\small 3-8-1 Komaba, Meguro-ku, Tokyo 153-8914, Japan}} \vspace{0.7em}\\
\centerline{{\small $^2$School of Mathematical Sciences}} \\
\centerline{{\small Shenzhen University}} \\
\centerline{{\small Nanshan district 518060, Shenzhen, P.~R.~China}}}
\date{}
\maketitle
\thispagestyle{empty}
\footnote[0]{2020 {\it Mathematics Subject Classification.} Primary: 35R35; Secondary: 35Q35, 76D03.}
\footnote[0]{{\it Key words and phrases.}
 Navier-Stokes equations, free boundary problem, splash domain, interface singularity, collapse of bubbles.}
%
%%%%%%%%%%%%%%
\begin{abstract}
We consider the one-phase free boundary problem for the incompressible Navier-Stokes equations in $\mathbb{R}^d$ ($d\ge2$).
The surface tension is taken into account.
 The initial domain, which is the outside a bubble, is an exterior domain.
 We prove that there exists a bubble evolving by this free boundary problem which collapses in a finite time without blowing up of principal curvatures of its boundary.
 In other words, what is called a splash singularity is formed in a finite time.
 This type of result also is valid for a bounded initial domain.
To construct such an example, we introduce the notion of a domain with $\delta$-wing which is a flat Riemannian manifold that is not embedded in $\mathbb{R}^d$, but it covers the $\delta$-neighborhood of the original domain whose boundary is self-intersected.
\end{abstract}
\maketitle
\thispagestyle{empty}

%%%%%%%%%%%%%%%%%%%%%%%%%%%%%%%%%%%%%%%%%%%%%%%%%
\section{Introduction} \label{SI} % Section 1

We consider the one-phase free boundary problem for the Navier-Stokes equations with surface tension of the form
\begin{equation}
\begin{aligned} \label{ENS}
    \partial_t u+(u\cdot\nabla)u-\operatorname{div}\mathbb{S}(u,\varpi) &=0
    &&\quad\text{in}\quad \Omega(t), \\
    \operatorname{div}u &=0
    &&\quad\text{in}\quad \Omega(t), \\
    \mathbb{S}(u,\varpi)\mathbf{n} &=\sigma\kappa\mathbf{n}
    &&\quad\text{on}\quad \partial\Omega(t), \\
    u\cdot\mathbf{n}  &=V
    &&\quad\text{on}\quad \partial\Omega(t),
\end{aligned}
\end{equation}
where $\mathbb{S}$ denotes the stress tensor defined by
\[
    \mathbb{S}(u,\varpi)=-\varpi I+2\mu\mathbb{D}(u), \quad
    \mathbb{D}(u)=\frac12 \left(\nabla u+(\nabla u)^T\right)
\]
and $\mathbf{n}$ denotes the outward unit normal of $\partial\Omega$; 
$\kappa$ denotes $(d-1)$ times mean curvature of $\partial\Omega$; 
$V$ denotes the normal velocity of $\partial\Omega$ in the direction of $\mathbf{n}$.
Here we assume that viscosity coefficient $\mu$ and surface tension coefficient $\sigma$ are positive constants.
 The condition \eqref{ENS}$_4$ is called a kinematic boundary condition which requires that $\partial\Omega(t)$ moves by the fluid velocity $u$.
 The condition \eqref{ENS}$_3$, usually known as the free boundary condition, is the force balance, i.e., the force from the fluid is proportional to the surface tension.

The problem is supplemented with initial condition
\begin{equation} \label{EIn}
    \Omega(0)=\Omega_0, \quad
    u(x,0)=u_0(x).
\end{equation}
We are interested in the case that $\Omega_0$ is a domain in $\mathbb{R}^d$ ($d\ge2$) whose boundary $\Gamma=\partial\Omega$ in $\mathbb{R}^d$ is compact to simplify the presentation.
 In other words, $\Omega_0$ is either a bounded or an exterior domain.
 For a bubble we mean that it is a complement of an exterior domain.
 The unknowns for \eqref{ENS}, \eqref{EIn} are velocity $u$, pressure $\varpi$ and domain $\Omega(t)$.
 We need to assume that $u$ decays at space infinity for an exterior domain.

We consider a domain $\Omega$ in $\mathbb{R}^d$ which lies locally one side of a boundary but the boundary is not embedded in $\mathbb{R}^d$ so that there is a self-intersection point.
Such a domain is often called a splash domain as in \cite{CS}; for a precise definition see Definition~\ref{DIm} in Section~\ref{SD}.
Our definition is slightly weaker than that of \cite{CS} because we do not assume the uniqueness of a self-intersection point.

The goal of this paper is to construct a family $\{\Omega_{0,\eta}\}$ of domains contained in $\Omega$ with smooth embedded boundary and initial velocity $u_{0,\eta}$ such that the solution $\Omega_\eta(t)$ of \eqref{ENS}, \eqref{EIn} with initial data $\Omega_{0,\eta}$ and $u_0$ ``collapses" near a self-intersection point of $\partial\Omega$ in a finite time.
 In other words, a self-intersection point appears on $\partial\Omega_\eta(t)$ and such ``singularity" is often called a splash singularity in the literature \cite{CS}.
 Our result is summarized in an informal way as follows.
 It characterizes the phenomenon of a bubble collapsing in finite time.
\begin{thm} \label{TMain}
Let $\Omega$ be a $C^3$ splash domain with compact boundary in $\mathbb{R}^d$.
 There is a non-decreasing sequence $\{\Omega_{0,\eta}\}_{\eta>0}$ of domains with $C^3$ embedded compact boundary in $\mathbb{R}^d$ ($d\ge2$) such that the following properties hold.
\begin{enumerate}
\item[(i)] As $\eta\to0$, $\Omega_{0,\eta}$ converges to a splash domain (in $C^3$ sense) contained in $\Omega$ and close to $\Omega$ in $\mathbb{R}^d$.
    Let $P$ be a self-intersection point of the boundary of $\Omega$. 
\item[(i\hspace{-0.1em}i)] There exists a velocity field $u_{0,\eta}\in C^1(\bar{\Omega}_{0,\eta})$ (decaying at space infinity in some sense) satisfying the compatibility condition
\begin{equation} \label{EComp}
    \operatorname{div}u_{0,\eta}=0\ \text{in}\ \Omega_{0,\eta},\quad
        \left(\mathbb{D}(u_{0,\eta})\mathbf{n}_{0,\eta}\right)_{\tan}=0\ \text{on}\ \partial\Omega_{0,\eta}
\end{equation}
    such that the boundary $\partial\Omega_\eta(t)$ of the solution \eqref{ENS}, \eqref{EIn} with $\Omega_0=\Omega_{0,\eta}$, $u_0=u_{0,\eta}$ ``collapses" in a finite time.
    More precisely, $\Omega_\eta(t)$ becomes a splash domain in a finite time whose boundary $\partial\Omega_\eta(t)$ has a self-intersection point near $P$.
    Furthermore, the solution vector field $u$ and the principal curvature of $\partial\Omega_\eta(t)$ is bounded as $t\uparrow T_c$, where $T_c$ is the collapsing time.
\end{enumerate}
\end{thm}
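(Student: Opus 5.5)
The plan follows the Castro--Córdoba--Fefferman--Gancedo--Gómez-Serrano strategy for splash singularities, adapted to the viscous, surface-tension setting, with the conformal change of variables replaced by the abstract notion of a domain with $\delta$-wing announced in the abstract.

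\emph{Step 1: lift to a manifold where the splash is regular.}
Starting from the $C^3$ splash domain $\Omega$ with self-intersection point $P$, we build a flat Riemannian manifold $\widetilde\Omega$ with $\delta$-wing. It is obtained by thickening the two sheets of $\partial\Omega$ that touch at $P$ and gluing them as distinct local charts. Two properties matter:
\begin{itemize}
\item the covering map $\pi:\widetilde\Omega\to\mathbb{R}^d$ is a local isometry onto the $\delta$-neighborhood of $\Omega$;
\item $\partial\widetilde\Omega$ is an embedded $C^3$ hypersurface in $\widetilde\Omega$, so the two sheets are separated there.
\end{itemize}
Because $\pi$ is a local isometry, system \eqref{ENS} makes sense on $\widetilde\Omega$ unchanged: the Navier--Stokes operator, divergence, stress tensor and curvature are all local Riemannian objects. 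A solution on a subdomain of $\widetilde\Omega$ projects to a solution in $\mathbb{R}^d$ as long as its boundary stays inside the region covered by $\pi$.

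\emph{Step 2: local well-posedness and time reversal on the manifold.}
We prove local well-posedness of \eqref{ENS}, \eqref{EIn} on domains in $\widetilde\Omega$. We use Lagrangian or Hanzawa coordinates and maximal $L^p$-regularity for the linearized Stokes problem with the surface-tension boundary condition. This follows the standard Solonnikov/Prüss--Simonett theory; flatness of the metric means only the gluing of local charts has to be checked. The key point is that the existence time and the $C^3$ bounds depend only on the $C^{3}$ (or $W^{3-1/p}_p$) norm of the initial interface and the norm of the initial velocity. They do not depend on the distance between the two sheets in $\mathbb{R}^d$, since the sheets are far apart in $\widetilde\Omega$.

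\emph{Step 3: construct the collapsing data by a perturbation argument.}
Viscous Navier--Stokes is not time-reversible, so we cannot simply run the flow backward from a splash as in the inviscid case. Instead we argue as follows.
\begin{itemize}
\item Take the splash domain lifted to $\widetilde\Omega$; call it $D_0$, with embedded boundary in $\widetilde\Omega$.
\item Choose an initial velocity $u_0$ on $D_0$ satisfying \eqref{EComp}, whose normal component at the two lifted preimages of $P$ points toward each other in $\mathbb{R}^d$. We build $u_0$ by a divergence-free extension with a prescribed normal trace, then a correction to kill the tangential stress.
\item Solving on $\widetilde\Omega$ gives a solution that projects into $\mathbb{R}^d$. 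For small $t>0$ the two sheets overlap in $\mathbb{R}^d$, so the projection is no longer a genuine domain; the signed distance $\rho(t)$ between the projected sheets near $P$ satisfies $\rho(0)=0$ and $\rho'(0)<0$.
\item Perturb the initial data: let $\Omega_{0,\eta}$ be $D_0$ with the sheets pushed apart near $P$ by a smooth bump of size $\eta$ (shrinking into $\Omega$). This gives a monotone family with $C^3$ convergence, proving (i). Let $u_{0,\eta}$ be the corresponding velocity corrected to keep \eqref{EComp}.
\item By continuous dependence from Step 2, the distance function $\rho_\eta(t)$ is continuous and close to $\rho(t)$ uniformly on a fixed interval $[0,T]$. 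Hence $\rho_\eta(0)>0$ and $\rho_\eta(T)<0$. The first time $T_c$ with $\rho_\eta(T_c)=0$ is a splash time: before $T_c$ the projection is an embedded domain solving \eqref{ENS} in $\mathbb{R}^d$, and at $T_c$ it is a splash domain with a self-intersection point near $P$.
\end{itemize}
The uniform-in-time $C^3$ bounds from Step 2 give boundedness of $u$ and of the principal curvatures as $t\uparrow T_c$. For exterior domains, decay of $u$ at infinity is built into the function spaces, which only requires that the wing construction stays confined near the compact boundary.

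\emph{Main obstacle.}
Step 2, with estimates uniform in $\eta$, is the hard part. Continuous dependence in a topology strong enough to control the $C^1$-in-time position of the interface pointwise has to be established on a non-embedded manifold. One must also verify that the solution on $\widetilde\Omega$ agrees with the genuine Euclidean solution up to $T_c$. I would handle this by doing all estimates in a fixed reference domain via a Hanzawa transform over $\partial D_0$. Then the perturbations $\Omega_{0,\eta}$ become small perturbations of the height function, and a contraction-mapping fixed point gives Lipschitz dependence on the data. The consistency check follows because the projection is injective on $\overline{\Omega_\eta(t)}$ for $t<T_c$.
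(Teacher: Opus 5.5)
Your outline is sound, but the step that produces the collapse takes a different and heavier route than the paper.

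\textbf{Your route versus the paper's.} You argue by stability. You solve from the exactly touching configuration $D_0$, note $\rho(0)=0$ and $\rho'(0)<0$, and then pass $\rho(T)<0$ to $\rho_\eta(T)<0$ using continuous dependence on the data.

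The paper never looks at the $\eta=0$ solution. It uses only what your Step~2 already provides:
\begin{itemize}
\item Theorem~\ref{TLW}, applied with $\|h_{0\eta}\|_{B^{3-1/p-1/q}_{q,p}}\le\varepsilon$ and $\|u_{0\eta}\|_{W^{2,q}}\le B$, gives an existence time $T$ and a bound $\|u^\#_\eta\|_{Y_T}\le CB$, both uniform in $\eta$.
\item The embedding $Y_T\subset C^\mu$ then gives a $C^\mu$ bound on the normal velocity $V$, also uniform in $\eta$.
\item At $t=0$ the data give $V=1$ on one sheet near $P$ and $V=0$ on the other. So there is a $t_1$, independent of $\eta$, such that for $t\le t_1$ the first sheet moves with speed at least $1/2$ and the second with speed below $1/4$.
\item The initial gap $2\eta$ therefore closes by time $8\eta$, and taking $\eta<t_1/16$ finishes the argument (Proposition~\ref{PCol}).
\end{itemize}

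\textbf{What this means for your ``main obstacle''.} It can simply be bypassed. Lipschitz dependence on $(h_0,u_0^\#)$ is not part of Theorem~\ref{TLW}; in Shibata's scheme it would require comparing two different linearizations $u_\zeta$.

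Your route also needs initial velocities $u_{0,\eta}$ that satisfy the compatibility condition on each $\Omega_{0,\eta}$ and whose Hanzawa pullbacks converge to $u_0$ in $B^{2(1-1/p)}_{q,p}$. That requires stability of the Stokes construction in Proposition~\ref{PIV} under domain perturbation. The paper needs only its uniform bound.

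What your approach buys is a clean picture: the splash appears as a perturbation of a solution that genuinely crosses on the wing manifold. The paper's approach buys a proof that needs only uniform a priori bounds and an $O(\eta)$ initial gap.

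\textbf{Two smaller points.}
\begin{itemize}
\item If $\Omega$ has other self-intersection points, or its sheets touch along a set, pushing the sheets apart only near $P$ does not give an embedded $\partial\Omega_{0,\eta}$. The paper first shrinks to $\{x\in\Omega:\operatorname{dist}(x,\Gamma)>\delta_2\}$ away from $P$. This also keeps the rest of the boundary apart during the short collapse time.
\item The solution class does not propagate $C^3$ bounds on the interface. It gives $h\in C([0,T];B^{3-1/p-1/q}_{q,p}(\hat\Gamma))\subset C([0,T];C^2)$, since $2/p+d/q<1$. That is exactly what the curvature bound needs.
\end{itemize}
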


Here $(w)_{\tan}$ of a vector field $w$ on $\partial\Omega_{0,\eta}$ denotes the tangential part, i.e., $(w)_{\tan}=(I-\mathbf{n}_{0,\eta}\otimes\mathbf{n}_{0,\eta})w$, where $\mathbf{n}_{0,\eta}$ denotes the unit exterior vector of $\partial\Omega_{0,\eta}$ and $I$ denotes the identity matrix.
 By a collapsing time $T_c$ we mean that
\[
	T_c := \sup \left\{t \bigm|
    \partial\Omega_{0,\eta}(\tau)\ \text{is embedded for}\ \tau\in[0,t)\right\}.
\]

Phisycally speaking, Theorem~\ref{TMain} says that a bubble may collapse in finite time which is often observed for air bubbles being in an incompressible fluid.

The main idea to prove Theorem~\ref{TMain} is to construct a solution after the boundary has a self-intersection.
 This strategy goes back to \cite{GI} where the existence of self-intersection for an evolving curve by surface diffusion.
 The main difference from \cite{GI} is that one has to construct a velocity field $u$ in $\Omega(t)$ when $\partial\Omega(t)$ is not embedded.
 Actually, this idea has been carried out for the free boundary problem \eqref{ENS} and \eqref{EIn} when there is no surface tension, i.e., $\sigma=0$ by D.~Coutand and S.~Shkoller \cite{CS}.
 What they proved is a result similar to Theorem~\ref{TMain} when $\sigma=0$ and $\Omega$ is bounded.
 Their result characterizes the phenomenon of the self-intersection for the boundary of a water droplet.
 In \cite{CS}, they used a Lagrange coordinate to reduce a problem to a problem in a fixed domain.
 It turns out this formulation is still valid to handle the evolution of $\Omega_\eta(t)$ after it has a self-intersection in $\mathbb{R}^d$ when $\sigma=0$. 

However, if $\sigma>0$, it seems that the formulation by the Lagrange coordinate does not work even for a standard local existence theory.
 Many researchers use the Hanzawa transform to reduce the free boundary problem to a problem in a fixed domain.
 The Hanzawa transform has been introduced by E.-I.~Hanzawa \cite{H} to construct a local-in-time classical solution to the Stefan problem which is a typical free boundary problem.
 However, when $\partial\Omega_\eta(t)$ has self-intersection and $\Omega_\eta(t)$ is no more embedded in $\mathbb{R}^d$, the original formulation of the Hanzawa transform is not available.
 To overcome this difficulty, we consider a domain with $\delta$-wing for a splash domain.
 This is locally a $\delta$-tubular neighborhood of the boundary but near self-intersection of the boundary, overlapped parts are considered different sets.
 This consideration can be realized mathematically by the notion of gluing manifolds.
 We give a precise definition in Section~\ref{SD}.
 Let $\Omega_\delta$ denote a domain with $\delta$-wing constructed from a splash domain $\Omega$.
 This set $\Omega_\delta$ can be regarded as a Riemannian manifold with flat metric but not embedded in $\mathbb{R}^d$.
 We shall work in $\Omega_\delta$ instead of  $\mathbb{R}^d$.
 Let $\hat{\Gamma}$ be a boundary of $\Omega$ in $\Omega_\delta$.
 For a given height function $h$ defined on $\hat{\Gamma}$ with $|h|\le\delta_1$ ($<\delta$) we set
\[
    \Omega^h:=\left\{x=y+s\mathbf{n}(y) \bigm|
    -\delta_1\le s <h(y),\ 
    y\in\hat{\Gamma}\right\}
    \cup \left\{x\in\Omega \bigm|
    \operatorname{dist}(x,\partial\Omega)>\delta_1\right\}
\]
for sufficiently small $\delta_1$ so that the normal coordinate is available for $|s|\le\delta_1$.
 Here, $\mathbf{n}$ denotes the exterior unit normal to $\hat{\Gamma}$ in $\Omega_\delta$.
 This set $\Omega^h$ is not embedded in $\mathbb{R}^d$ unless $h\le0$.
 The Hanzawa transform $\Xi_h$ we introduce is a $C^k$ ($k\ge2$) transform from $\Omega$ to $\Omega^h$ when $\hat{\Gamma}$ is $C^{k+1}$ and $h$ is a $C^k$ function. 
Near $\hat{\Gamma}$,
\[
    \Xi_h(y) = y+Lh(y),
\]
where $L$ is a lift of a function on $\hat{\Gamma}$ to $\Omega$.
 Away from a neighborhood of $\hat{\Gamma}$, we set $\Xi_h(y)=y$.
 If $\hat{\Gamma}$ is $C^2$ and $\|h\|_\infty$, $\|\nabla h\|_\infty$ are small, it turns out $\Xi_h$ gives a diffeomorphism.
 Here $\|\cdot\|_\infty$ denotes the $L^\infty$ norm on $\hat{\Gamma}$.
 More detailed properties as well as the definition will be given in Section~\ref{SH}.

We postulate that the moving domain $\Omega(t)$ in a free boundary problem \eqref{ENS} represented by $\Omega^{h(t)}$ with $h:\hat{\Gamma}\times[0,T)\to(-\delta_1,\delta_1)$ with initial data $\Omega_0=\Omega^{h(0)}$, where $h(t)=h(\cdot,t)$.
We transform $\Omega^{h(t)}$ to $\Omega$ by using the inverse Hanzawa transform $Z_{h(t)}$ given in this paper.
 Then the system \eqref{ENS}, \eqref{EIn} is reduced to a system of an equation for $(u,\varpi,h)$ with $h|_{t=0}=h_0$, $u|_{t=0}=u_0$.
 We have to solve this system locally-in-time with careful checking of the life span after $\partial\Omega(t)$ loses its embeddedness in $\mathbb{R}^d$.
 Fortunately, there is a very nice and thorough work by Y.~Shibata \cite{Sh} who proved unique existence of local-in-time solution for \eqref{ENS}--\eqref{EIn} when $\Omega$ is uniformly $C^3$-domain with estimate of life span.
 His result includes a bounded and an exterior domain as a special case.
 We notice that his result is still valid when $\Omega$ is a splash $C^3$-domain, because the problem is reduced to a local perturbed half-space problem and a whole space.
 We shall briefly discuss such a well-posedness result.

Once such a good well-posedness result is available, we are able to construct a sequence of initial data $h_{0,\eta}$ and $u_{0,\eta}$ so that the boundary $\partial\Omega_\eta(t)$ is forced to collapse in finite time by estimating the speed.
 This strategy is the same as in \cite{CS}.
 One has to be careful so that the existence time (life span) is estimated from below uniformly with respect to $\eta\downarrow0$.
 Also one has to be careful in constructing $u_{0,\eta}$ because it must satisfy the compatibility condition \eqref{EComp}.
 As in \cite{CS} we use a solvability result by \cite{SS71}.
 For an exterior domain, we further use the Bogovski's lemma (see e.g.\ \cite{Gal}) to solve $\operatorname{div}w=f$ to recover $\operatorname{div}u_{0,\eta}=0$ from cutting off procedure.
 (For more general domain having non-compact boundary, the construction of suitable $u_{0,\eta}$ should be more involved.)

There are several papers constructing a splash singularity other than \cite{CS}.
 In \cite{CCFGG}, a splash singularity is constructed for \eqref{ENS} and \eqref{EIn} with $\sigma=0$ in the plane $\mathbb{R}^2=\mathbb{C}$ by using the complex mapping $z\mapsto\sqrt{z}$ to transform ``a domain with overlapping" to a domain with no overlapping.
 This idea goes back to the work by \cite{CCFGG13}, where formation of splash singularity is proved for the one-phase water wave equation which has no viscosity.
 The idea of \cite{CS} is extended for the equations of magneto-hydrodynamics by \cite{HY} and equations for viscoelastic fluid models by \cite{DMS1}, \cite{DMS2} when $\sigma=0$.

Mathematical analysis on the free boundary problems for the Navier-Stokes equations for an initial domain with embedded smooth boundary  has been started by V.~A.~Solonnikov \cite{Sol77}, where he proved local-in-time existence for $\sigma>0$ when $\Omega_0$ is bounded.
 By now, it has a long history.
 The problem is roughly classified by a shape of initial domain $\Omega_0$ either a bounded domain or an ocean like domain.

For a bounded domain, local in time unique existence theorem was proved by V.~A.~Solonnikov \cite{Sol86}, \cite{Sol90}, \cite{Sol91}, \cite{Sol92} for $\sigma>0$.
 For $\sigma=0$, it was first established by \cite{Sol88}.
 His analysis did not appeal to energy estimate but maximal regularity estimates for a linearized equations.
 The reader is referred to a book \cite{DS21} a review article \cite{SD18} especially for activity of Russian school.

Even we restrict ourselves only on unique local existence results, there is a number of results depending on choice of function spaces.
 We do not mention them in this paper.
 We rather refer a recent book \cite{DS21} as well as nice lecture notes by Y.~Shibata \cite{Sh}.

If initial domain is ocean-like with finite depth, unique local existence theorem was first proved by J.~T.~Beale \cite{B80} for $\sigma=0$ under gravitational force for $d=3$ by using $L^2$-Sobolev spaces.
 Such a problem has been further studied by G.~Allain \cite{Al87} and A.~Tani \cite{Ta96} when $\sigma>0$ in the framework of $L^2$ Sobolev space and by H.~Abels \cite{A05} when $\sigma=0$ in $L^p$ framework.

More recently, Y.~Shibata \cite{Sh16}, Y.~Shibata and S.~Inna \cite{ShI} established the local well-posedness in general unbounded domain under the assumption that the initial domain is uniformly $C^3$ and the weak Dirichlet problem is uniquely solvable in itinial domain.
 In \cite{Sh16}, the case $\sigma=0$ was studied by using the Lagrange transform while in \cite{ShI}, the case $\sigma>0$ was studied by using the Hanzawa transform.
 The lecture note \cite{Sh} explains both methods.
 For an exterior domain, the local and global-in-time unique existence theorem were proved by Y.~Shibata \cite{Sh17}, \cite{ShCPAA}, \cite{Sh18} for $\sigma=0$.
 In his setting, $L^q$ in space direction while $L^p$ in time.
 One reason he uses such different exponents is that he would like to find some decay at time infinity to study large time behavior.
 Although we do not study any global-in-time solution, we essentially use Y.~Shibata's local-in-time existence theorem in \cite{Sh}.

Although this paper does not require any global-in-time results, we notice that this is an active area of current researches.
 Let us explain the case of an ocean like domain with finite depth.
 For $\sigma=0$, L.~Sylvester \cite{Sy} first proved global existence results for small data.
 Since then there has been significant progress for this problem either $\sigma>0$ and $\sigma=0$.
 For example, Y.~Guo and I.~Tice \cite{GT13a}, \cite{GT13b} studied in the case $\sigma=0$ and proved that solutions decay to equilibrium at an algebraic rate by introducing a new high-regularity energy method.
 The reader is referred to a recent paper by \cite{SaSh26} for more references.
 If the depth is infinite, the global existence problem is essentially more difficult because Poincar\'e inequalities do not work.
 If there is no surface tension, the problem is still parabolic if one uses the Lagrange coordinates.
 See \cite{OSh22}, \cite{DHMT25}, \cite{OgSh21}, \cite{ShW}.
 If there is surface tension, some hyperbolic effect appears in transport equation so the problem is more difficult.
 Fortunately, H.~Saito and Y.~Shibata \cite{SaSh24}, \cite{SaSh26} are able to overcome this difficulty and prove the global solvability result.

This paper is organized as follows.
 In Section~\ref{SD}, we introduce the notion of a domain with $\delta$-wing.
 In Section~\ref{SH}, we introduce the Hanzawa transform in our setting and derive a few important properties.
 In Section~\ref{SL}, we prove a more precise version of Theorem~\ref{TMain}.
 In Section~\ref{SW}, we explain the reason why the unique local-in-time existence of solution is adjustable for a splash domain by sketching the proof.

%%%%%%%%%%%%%%%%%%%%%%%%%%%%%%%%%%%%%%%%%%%%%%%%%
\section{Domain with $\delta$-wing} \label{SD} % Section 2

We consider a domain $\Omega$ in $\mathbb{R}^d$ whose boundary $\Gamma=\partial\Omega$ is not necessarily an embedded manifold.
 We then consider a flat Riemannian manifold which covers $\delta$-neiborhood of $\Omega$ but is not embedded in $\mathbb{R}^d$.
 This manifold is a key ambient space to study our free boundary problem.

Let us state these concepts in a precise way.
 Let $B_r^d(P)$ denote an open ball in $\mathbb{R}^d$ with raduis $r$ centered at $P\in\mathbb{R}^d$, i.e.,
\[
    B_r^d(P) = \left\{ x\in\mathbb{R}^d \bigm| |x-P|<r \right\}.
\]
It is convenient to consider a regular cylinder
\begin{align*}
    C_r^d(P) &= \left\{ x=(x',x_d)\in\mathbb{R}^d \bigm|
    |x'|<r,\ |x_d|<r \right\} \\
    &= B_r^{d-1}(0) \times (-r,r),
\end{align*}
where $0$ is the origin of $\mathbb{R}^{d-1}$.
 Let $f$ be a $C^k$ function defined on $\overline{B_r^{d-1}(0)}$, i.e.,
\[
    f\in C^k \left( \overline{B_r^{d-1}(0)} \right).
\]
We assume 
\begin{equation} \label{EAF1}
    f(0)=0, \quad \nabla'f(0)=0,
\end{equation}
where $\nabla'=\left(\partial/\partial x_1, \ldots, \partial/\partial x_{d-1}\right)$.
 We set
\[
    K_f :=\sup \left\{ \left| \partial_{x'}^\alpha f(x') \right| \Bigm|
    x'\in B_r(0),\ |\alpha|\le k \right\},
\]
where $\partial_{x'}^\alpha=\partial_{x_1}^{\alpha_1}\cdots\partial_{x_{d-1}}^{\alpha_{d-1}}$, $\partial_x=\partial/\partial x_i$, $|\alpha|=\alpha_1+\cdots +\alpha_{d-1}$, $\alpha_i\in\mathbb{N}\cup\{0\}$.
 To simplify the notation, we often drop the superscript $d$ and $d-1$ of $B_r$ and $C_r$.
 Since $f\in C^k\left(\overline{B_r(0)}\right)$, the constant $K_f$ is finite.

We assume that
\begin{equation} \label{EAF2}
    \delta_f := \min \left\{ r+\inf f,\ r-\sup f \right\}>0,
\end{equation}
where $\sup$ and $\inf$ are taken on $B_r(0)$.
 Let us denote
\[
    C_{r,f} = \left\{ x\in C_r \bigm| x_d>f(x') \right\}, 
\]
which is an epi-graph of $f$.
\begin{definition} \label{DIm}
Let $\Omega \subset \mathbb{R}^d$ be a domain and $r, \delta_0, K > 0$.
We say that $\Omega$ is a uniformly $C^k$ ($k\ge1$) \emph{splash domain} with boundary $\Gamma=\partial\Omega$ of type $(r,\delta_0,K)$ if for each point $P \in \Gamma$,
\begin{enumerate}
    \item[(i)] $\Omega \cap B_{2r}(P)$ has finitely many connected components $D_1(P), \ldots, D_{m(P)}(P)$ with $m(P)$ denoting the number of connected components in $\Omega\cap B_{2r}(P)$;
    \item[(i\hspace{-0.1em}i)] for every $1 \leq j \leq m(P)$, there exist a unique rotation $Q_{j,P}$ and a unique scalar function $f_{j,P}\in C^k\left(\overline{B_r(0)}\right)$ satisfying \eqref{EAF1}, \eqref{EAF2} with $K_{f_{j,P}}\le K$, $\delta_0\le\delta_{f_{j,P}}$ such that
    \[
        D_j(P) \cap (Q_{j,P} C_r+P)
        = Q_{j,P} C_{r,f_{j,P}} +P
    \]
    provided that $P\in\partial D_j(P)$;
    \item[(i\hspace{-0.1em}i\hspace{-0.1em}i)] there exists a point $P_0 \in \Gamma$ such that $P_0 \in \partial D_j(P_0)$ for two $j$'s.
\end{enumerate}
If $(r, \delta_0, K)$ is allowed to depend on $P$, we simply say that $\Omega$ is a $C^k$ splash domain.
\end{definition}
\begin{rem} \label{RIm}
    \begin{enumerate}
        \item[(i)] By geometry, there are at most two $j$'s satisfying $P \in \partial D_j(P)$ since $k \ge 1$; see Figure~\ref{FD}.
 In addition, we note that it is possible to have $1 \leq j \leq m(P)$ such that $P \notin \partial D_j(P)$; see $D_3(P)$ in Figure~\ref{FD}.
        \begin{figure}[H]
            \centering
            \includegraphics[width=0.4\linewidth]{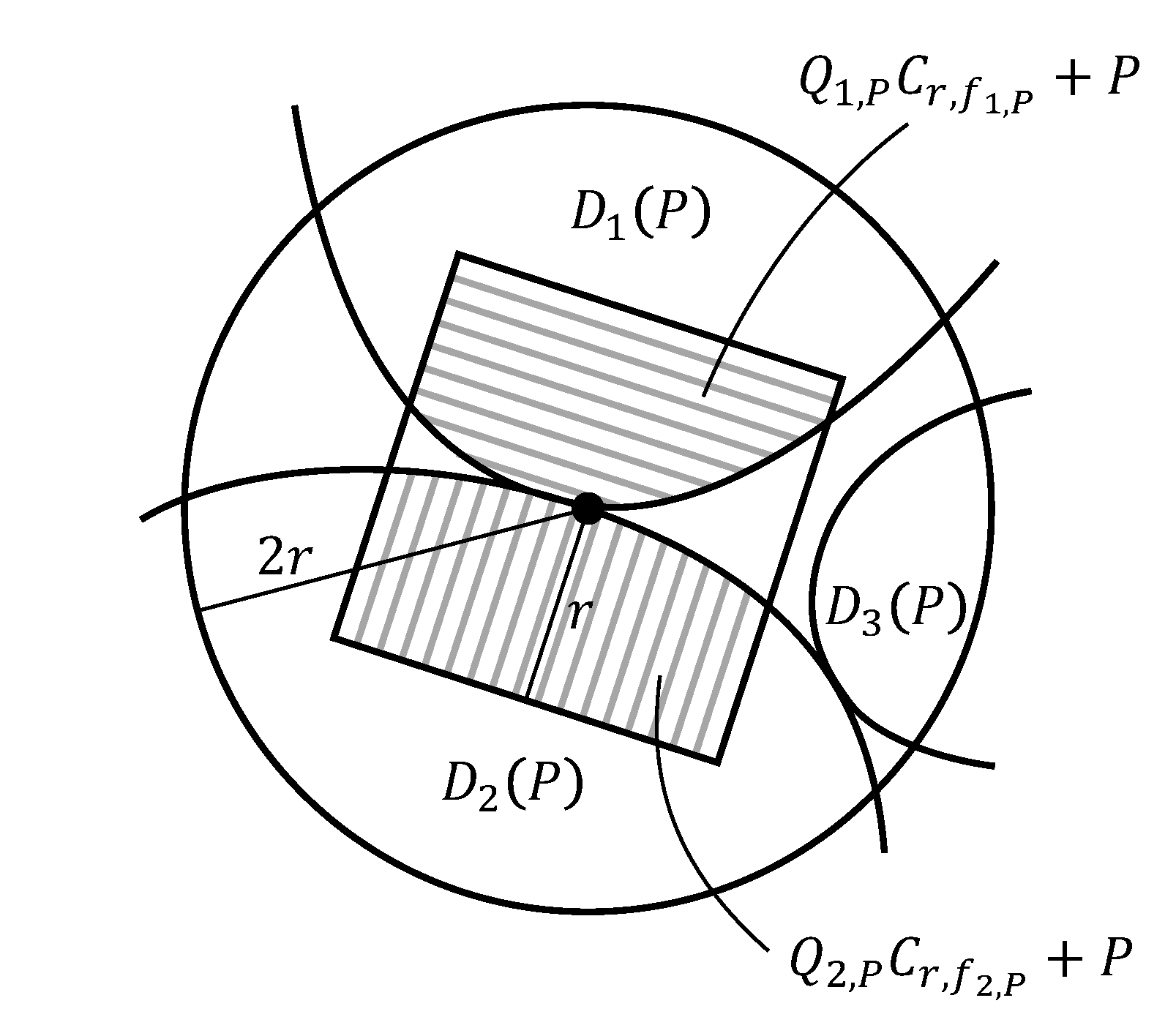}
            \caption{Connected components}
            \label{FD}
        \end{figure}
        If there are two $j$'s satisfying $P \in \partial D_j(P)$, we say that $P$ is a self-intersection point.
        If $P$ is a self-intersection point and $j=1,2$, then $Q_{2,P}=RQ_{1,P}$ where $R$ is a $180^\circ$-degree rotation such that $R\mathbf{n}=-\mathbf{n}$ where $\mathbf{n}$ is the unit normal at $P$ on $\partial D_1$.
        Moreover, $Q_{2,P}C_{r,f_{2,P}}=Q_{1,P}(C_r\setminus\bar{C}_{r,\tilde{f}_{2,P}})$ with some $\tilde{f}_{2,P}$.
        \item[(i\hspace{-0.1em}i)] In Definition~\ref{DIm}, if condition (i\hspace{-0.1em}i\hspace{-0.1em}i) does not occur, i.e., $m(P) = 1$ for all $P\in\Gamma$, then $\Omega$ is called a uniformly $C^k$ domain in the literature; see e.g.\ \cite{BG}.
        \item[(i\hspace{-0.1em}i\hspace{-0.1em}i)] Assume that the boundary is $(r,\delta_0,K)$-type.
         Then it is $(\rho,\delta_{0\rho},K)$-type for any $\rho<r$ with $\delta_{0\rho}=\delta_0-(r-\rho)$ provided that $\delta_{0\rho}>0$.
         Moreover, if $\rho$ is sufficiently small, we may take $\delta_{0\rho}/\rho < 1$ close to $1$ because of the continuity of normals of $\Gamma$.
         Furthermore, if $\rho\to0$, then $K\to0$.
         \item[(i\hspace{-0.1em}v)] If $\Gamma$ is compact, a $C^k$ ($k\ge1$) splash domain is a uniformly $C^k$ splash domain with boundary of type $(r,\delta_0,K)$ for some $r_0>0$, $\delta_0\in(0,r_0)$, $K<\infty$.
    \end{enumerate}
\end{rem}

We next construct the $\delta$-wing of $\Omega$.
 We set
\[
    C_{r,f,\delta} = \left\{ x\in C_r \bigm|
    d(x,C_{r,f})<\delta \right\}
    \quad\text{with}\quad \delta<\delta_f.
\]
\begin{definition} \label{DW}
    Let $\Omega$ be a uniformly $C^k$ ($k\ge1$) splash domain with boundary of type $(r,\delta_0,K)$.
     For $\delta<\delta_0$, we set
    \[
       \hat{U_\delta} = \coprod \left\{ \left(Q_{j,P} C_{r,f_{j,P},\delta}+P \right) \setminus D_j(P)
        \bigm| 1\le j\le m(P),\ P\in\partial\Omega,\ P\in\partial D_j(P)\right\}
    \]
    be a disjoint union.
     If
    \[
       (Q_{j,P} C_{r,f_{j,P}}+P) \cap
       (Q_{j',P'} C_{r,f_{j',P'}}+P') \neq \emptyset, \quad\text{for}\quad
       P, P'\in\partial\Omega, \quad
       1\le j\le m(P),\ 1\le j'\le m(P'),
    \]
    then for
    \[
       x\in(Q_{j,P} C_{r,f_{j,P},\delta}+P) \setminus D_j(P), \quad
       y\in(Q_{j',P'} C_{r,f_{j',P'},\delta}+P') \setminus D_{j'}(P'),
    \]
    we say $x\sim y$ if $x=y$ as a point of $\mathbb{R}^d$.
    This is an equivalence relation and the $\delta$-wing of $\Omega$ can be defined by the equivalence class
    \[
       U_\delta=\hat{U}_\delta/\sim;
    \]
    see Figure~\ref{FI} and Figure~\ref{FN}.
\begin{figure}[htb]
\centering
\begin{minipage}[b]{0.48\columnwidth}
    \centering
            \includegraphics[width=1\linewidth]{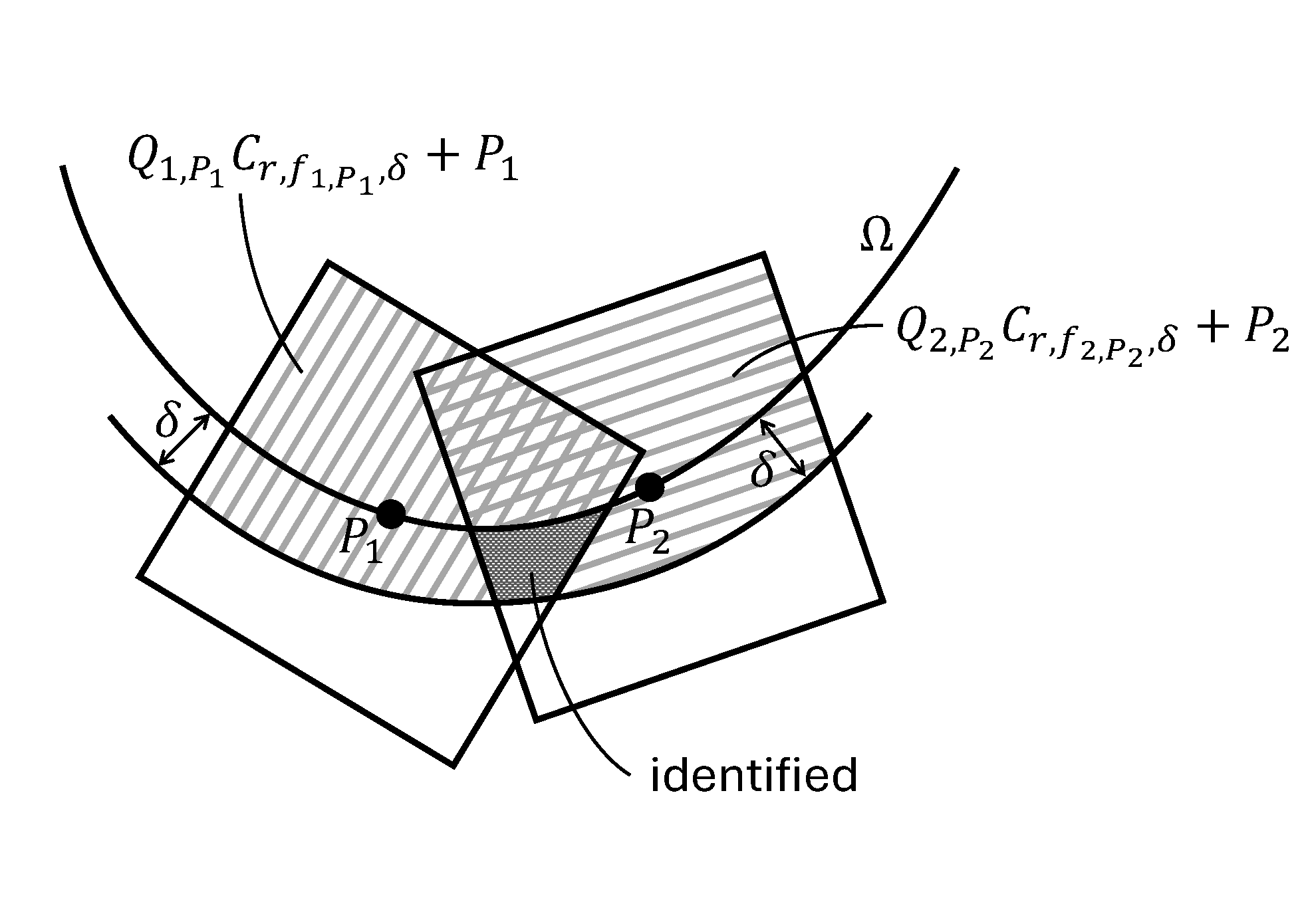}
            \caption{Identified overlap}
            \label{FI}
\end{minipage}
\begin{minipage}[b]{0.4\columnwidth}
    \centering
            \includegraphics[width=1\linewidth]{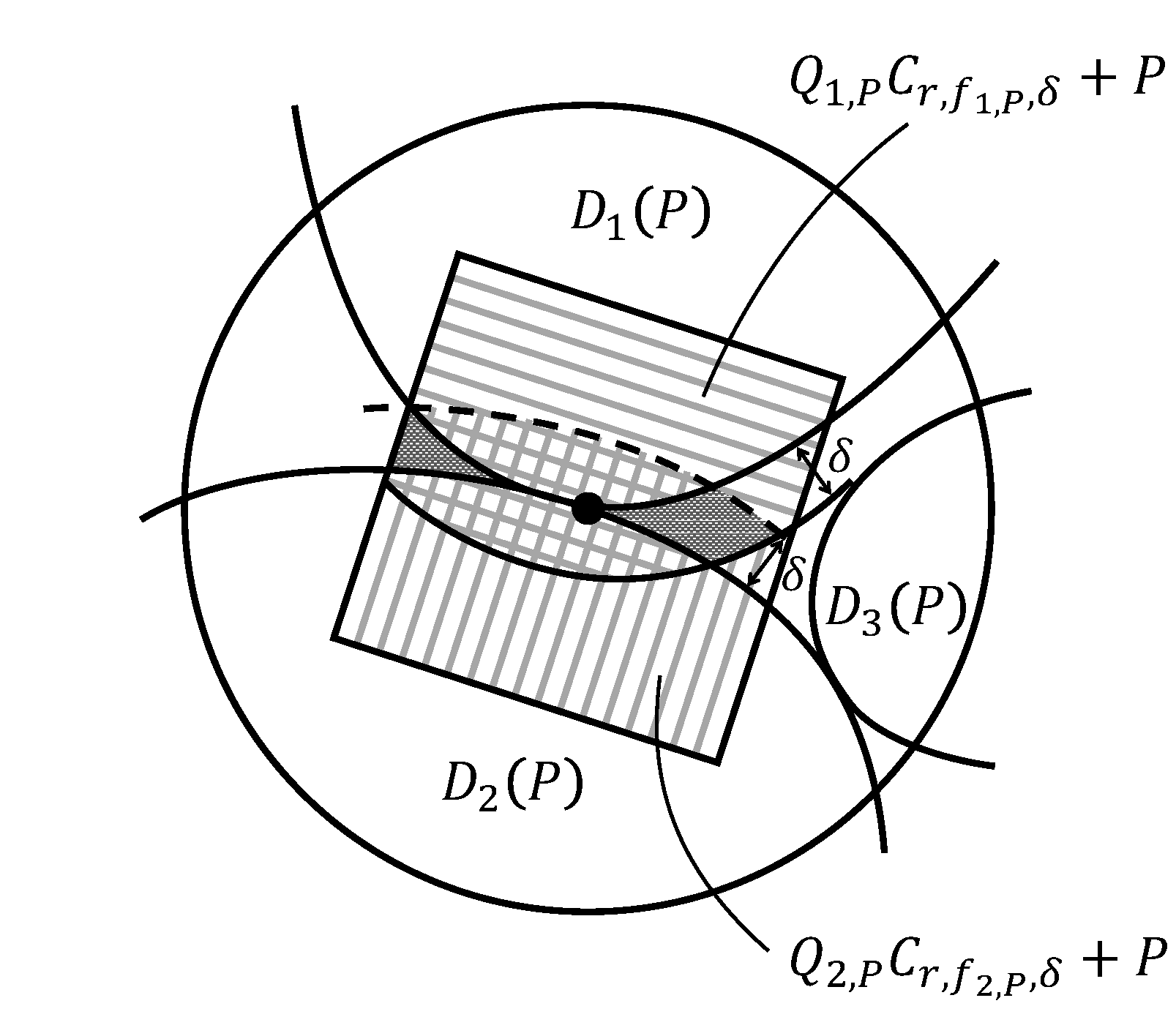}
            \caption{Non-identified overlap}
            \label{FN}
\end{minipage}
\end{figure}
\end{definition}
\begin{rem} \label{RN}
    If we define the $\delta$-wing $U_\delta$ for a uniformly $C^k$ domain $\Omega \subset \mathbb{R}^d$ following Definition~\ref{DW}, then this $U_\delta$ would agree with
    \[
        V_\delta=\delta\text{-neighborhood of}\ \Bar{\Omega} \setminus\Omega.
    \]
    Let $\pi$ be a canonical projection from $U_\delta$ to $\mathbb{R}^d$, i.e., $\pi(x)$ for $x\in U_\delta$ is regarded as a point in $\mathbb{R}^d$, then $\pi(U_\delta)=V_\delta$ even if $\Omega$ has a non-embedded boundary.
\end{rem}
\begin{definition} \label{DDW}
Let $U_\delta$ be the $\delta$-wing of $\Omega$.
 We set
\[
    \Omega_\delta=U_\delta \coprod\Omega \quad\text{(disjoint union)}
\]
and call it a \emph{domain with $\delta$-wing} constructed from $\Omega$.
 The set $\Omega_\delta$ can be regarded as a Riemannian manifold with flat metric by introducing a coordinate system as follows.
 Let $\iota_{j,P}$ be the mapping from $C_{r, f_{j,P}, \delta}$ to $\Omega_\delta$ defined by
\begin{equation*}
\iota_{j,P} (x) =
\begin{cases}
[Q_P x+P], & x\in C_{r,f_{j,P},\delta} \setminus C_{r,f_{j,P}}, \\
Q_P x+P, & x\in C_{r,f_{j,P}},
\end{cases}
\end{equation*}
where $[\quad]$ denotes the equivalence class in $U_\delta$.
 The coordinate system for $\Omega_\delta$ is given by 
\begin{align*} 
\{ \iota_{j,P} \}_{P \in \partial \Omega, \, 1 \leq j \leq m(P)} \cup \{ \iota \},
\end{align*}
 where $\iota:\Omega\to\Omega$ is the identity.
 (Although the manifold $\Omega_\delta$ is not embedded in $\mathbb{R}^d$, the canonical projection $\pi(\Omega_\delta)$ is nothing but the $\delta$-neighborhood of $\Omega$ in $\mathbb{R}^d$.)
\end{definition}

We shall fix coordinate patches of $\Omega_\delta$.
For $\delta < \delta_0 - r/2$, we consider the manifold $\Omega_\delta$ with $\delta$-wing defined by Definition~\ref{DDW}.
 Let $d_{\Omega_\delta}(x,y)$ be the geodesic distance of two points in $\Omega_\delta$, i.e., it is the minimal length of a curve connecting $x$ and $y$ in a manifold.
 We use $\hat{\Gamma}$ to denote the boundary of $\Omega$ in the manifold $\Omega_\delta$, i.e., $\hat{\Gamma}$ is an immersed $d-1$ dimensional manifold in $\mathbb{R}^d$. We note that the canonical projection $\pi:\hat{\Gamma}\to\Gamma$ may not be injective.
It turns out that for sufficiently small $\delta_1\le\delta$,
\begin{equation} \label{EU}
    U_{\delta_1} = \left\{ x\in U_\delta \bigm|
    d_{\Omega_\delta}(x,\hat{\Gamma})<\delta_1 \right\}.
\end{equation}
Indeed, since $\delta_1<\delta_0-r/2$, in each $C_{r, f_{j,P},\delta_1}$ the boundary effect near $|x'| = r$ can be neglected in $C_{r/2}$, i.e.,
\[
    \tilde{U}_{\delta_1} \cap \iota_{j,P}(C_{r/2})
    = \iota_{j,P} \left( C_{r/2} \cap (C_{r,f_{j,P},\delta_1} \setminus C_{r,f_{j,P}}) \right)
\]
for any $P\in\Gamma$ and $f_{j,P}$, where $\tilde{U}_{\delta_1}$ is the right-hand side of \eqref{EU}.

Let $\Omega$ be a $C^k$ ($k\ge3$) splash domain with compact boundary $\Gamma$.
 By Remark~\ref{RIm} (i\hspace{-0.1em}i), $\Omega$ is a uniformly $C^k$ splash domain with boundary of type $(r,\delta_0,K)$ for some $r,\delta_0,K > 0$.
 By Remark~\ref{RIm} (i\hspace{-0.1em}i\hspace{-0.1em}i) we may assume $\delta_0>r/2$.
 Since $\Gamma$ is compact, there exists finitely many points $P_i\in\Gamma$ ($1\le i\le M$) such that
\begin{equation} \label{ELS}
    \mathcal{U}_r= \left\{ \iota_{j,P_i} (C_{r/2} \cap C_{r,f_{j,P_i},\delta})
    \bigm| 1\le i \le M, \ 
    1\le j\le m(P_i)\ \text{with}\ P_i\in\partial D_j(P_i) \right\}
\end{equation}
covers $\delta$-neighborhood of $\hat{\Gamma}$ in $\Omega_\delta$.
 This can be regarded as a local coordinate patch near $\hat{\Gamma}$.

As usual, we introduce a normal coordinate system.
 For this purpose, we assume that $k\ge2$ so that $\hat{\Gamma}$ is a $C^k$ ($k\ge2$) immersed manifold in $\mathbb{R}^d$.
 In this setting, it is well known that for each $x\in\Omega_{\delta_1}$, there is a unique point $\Pi(x)\in\hat{\Gamma}$ such that
\[
    d_{\Omega_\delta}(x,\hat{\Gamma})
    = \left| x-\Pi(x) \right|
\]
provided that $\delta_1$ is chosen sufficiently small (cf.\ \cite[14.6]{GT}).
 (In other words, $\hat{\Gamma}$ has a positive reach.)
 Let $\mathbf{n}(x)$ be an exterior normal of $\hat{\Gamma}$ of $\Omega$ in $\Omega_{\delta_1}$.
 The normal coordinate system is of the form
\begin{equation} \label{ECor}
    x = y+ s \mathbf{n}(y), \quad
    y\in\hat{\Gamma},
\end{equation}
where $y=\Pi(x)$ and $s \in \mathbb{R}$ is defined by
\begin{equation*}
	s =
     \begin{cases}
     \left| x-\Pi(x)\right|, & x\in U_\delta, \\
     -\left| x-\Pi(x)\right|, & x\in\Omega.
     \end{cases}
\end{equation*}
This function is the signed distance from $x$ to $\hat{\Gamma}$ in $\Omega_\delta$.
 This relation \eqref{ECor} is a $C^{k-1}$ coordinate system near $\hat{\Gamma}$.

%%%%%%%%%%%%%%%%%%%%%%%%%%%%%%%%%%%%%%%%%%%%%%%%%

\section{Hanzawa transform} \label{SH} % Section 3

In this section, we introduce a Hanzawa transform following \cite{Sh}, \cite[Chapter~1, \S3.2 (a)]{PS}.
From now on, to simplify the argument we consider a domain $\Omega$ in $\mathbb{R}^d$ with compact boundary.
In other words, $\Omega$ is either a bounded or an exterior domain.
Assume that $\Omega$ is a $C^k$ ($k\ge2$) splash domain.
Let $\Omega_\delta$ be a domain with $\delta$-wing constructed from $\Omega$.
Let $\hat{\Gamma}$ denote its boundary in $\Omega_\delta$ which is a $d-1$ dimensional $C^k$ manifold but it is not embedded in $\mathbb{R}^d$.
We shall recall a few known results.

%%%%%%%%%%%%%%%%%%%%%%%%%%%%%%%%%%%%%%%%%%%%%%%%%
\subsection{Trace and lifting} \label{SSS} % Subsection 3.1

Let $W^{m-1/q,q}(\hat{\Gamma})$ be the Sobolev space on $\hat{\Gamma}$ for $m\in\mathbb{N}$.
 It can be defined the space of trace from $W^{m,q}(\Omega)$.
 In fact, let $\mathcal{U}_r$ be a coordinate system of the $\delta$-neighborhood of $\hat{\Gamma}$ in $\Omega_\delta$ defined by \eqref{ELS}.
 We may assume that $\mathcal{U}_{r'}$ for $r'<r$ close to $r$ already covers $\delta$-neighborhood of $\hat{\Gamma}$ in $\Omega_\delta$.
 Let $\{\varphi_{ij}\}$ be a partition of the unity associated with the coordinate system.
 In other words, $\varphi_{ij}\in C_c^\infty\left(\iota_{j,P_i}(C_{r/2})\right)$ satisfies
\begin{gather*}
	\varphi_{ij}>0 \ \text{on}\  \iota_{j,P_i}(C_{r'/2}), \quad
	(i,j)\in\Lambda, \\
	\operatorname{spt}\varphi_{ij} \subset \iota_{j,P_i}(C_{r/2}), \quad
	(i,j)\in\Lambda, \\
	\sum_{(i,j)\in\Lambda} \varphi_{ij} \equiv 1
	\ \text{on}\ \delta\text{-neighborhood of}\ \hat{\Gamma}\ \text{in}\ \Omega_\delta,
\end{gather*}
where $\Lambda=\left\{ (i,j)\bigm| 1\le i\le M,\ 1\le j\le m(P_i),\ P_i\in\partial D_j(P_i) \right\}$.
 (For a function supported in $\iota_{j,P_i}(C_{r/2})$, we regard it a function on $\Omega_\delta$ by a zero extension to $\Omega_\delta\setminus\iota_{j,P_i}(C_{r/2})$.)
 For $w\in W^{m,q}(\Omega)$, the trace $\gamma w$ on $\hat{\Gamma}$ (not $\partial\Omega$) is defined as
\[
	\gamma w = \sum_{(i,j) \in \Lambda} \gamma(\varphi_{ij}w),
\]
where the trace of $\varphi_{ij}w$ is taken in
\[
	\iota_{j,P_i} (C_{r/2} \cap C_{r,f_{j,P_i,\delta}}).
\]
In other words, we take trace of 
\[
	i_{j,P_i}^\#(\varphi_{ij}w)(x)
	:= (\varphi_{ij}w) \left(i_{j,P_i}(x)\right)
\]
to $x_d=f_{j,P_i}(x')$ and regard as a function on $\hat{\Gamma}\cap\iota_{j,P_i}(C_{r/2})$.
We define
\begin{align} \label{trDef:frSob}
W^{m-1/q}(\hat{\Gamma}) := \gamma W^{m,q}(\Omega).
\end{align}
This is well defined since it is independent of the choice of coordinate system $\mathcal{U}_r$.
 On the other hand, the Besov space $B_{q,p}^s$ ($s \in \mathbb{R}$, $1 \leq p,q \leq \infty$) on $\hat{\Gamma}$ can be defined as follows.
 For a scalar function $h$ defined on $\hat{\Gamma}$, we define $h \in B_{q,p}^s(\hat{\Gamma})$ if and only if 
\begin{align*}
(\varphi_{ij} h) \circ \iota_{j, P_i} \in B_{q,p}^s(\mathbb{R}^{d-1}), \quad \forall \; (i,j) \in \Lambda
\end{align*}
and
\begin{align*}
\| h \|_{B_{q,p}^s(\hat{\Gamma})} := \sum_{(i,j) \in \Lambda} \| (\varphi_{ij} h) \circ \iota_{j, P_i} \|_{B_{q,p}^s(\mathbb{R}^{d-1})} < \infty.
\end{align*}
When $s \notin \mathbb{Z}$, the Besov space $B_{q,q}^s(\hat{\Gamma})$ agrees with the Sobolev-Slobodeckij space $W^{s,q}(\hat{\Gamma})$. 
In particular, $B_{q,q}^{m - 1/q}(\hat{\Gamma})$ is an equivalent definition for the trace space \eqref{trDef:frSob}.

To define a Hanzawa transform we need a lift from $W^{m-1/q}(\hat{\Gamma})$ to $W^{m,q}(\Omega)$ for $q\in(1,\infty)$.
 Here is a typical way to lift a function defined in a $d-1$ dimensional unit cube $C=\left\{|x'|<1\right\}$ to a function defined in a pyramid of the form
\[
	P_d = \left\{ x\in\mathbb{R}^d \bigm|
	0<x_d<1,\ |x_i|<1-x_d,\ i=1,\ldots,d-1 \right\}.
\]
It turns out that the linear operator
\[
	L(g)(x',x_d) = \frac{1}{x_d^{d-1}} \int_{\mathbb{R}^{d-1}}
	R \left( \frac{y'-x'}{x_d} \right) g(y')\, dy'
\]
is continuous from $W^{m-1/q}(C)$ to $W^{m,q}(P_d)$, where $R\in C_c^\infty(\mathbb{R}^{d-1})$ satisfying $\int_{\mathbb{R}^d}R\,dx=1$; see \cite[Lemma~5.6 in Section~2.5.5]{Ne}.
 For the case of a bounded Lipschitz domain $\Omega$ with boundary $\Gamma$, a continuous linear operator (bounded linear operator) $L:W^{m-1/q,q}(\Gamma)\to W^{m,q}(\Omega)$ can then be constructed using techniques of localization by partition of unity and flattening boundaries by a suitable coordinate changes; see \cite[Theorem~5.8 in Section~2.5.7]{Ne}.
 For $g\in W^{m-1/q,q}(\Gamma)$, the function $L(g)$ is called a continuous lifting of $g$ (by $L$).
 Let us go back to our setting.
 Let $g$ be a function defined on
\[
	\Sigma_{ij} = \left\{ x\in C_r \bigm|
	f_{j,P_i}(x') = x^d \right\}
\]
supported on $C_{r'/2}\cap\Sigma_{ij}$.
 Since $C_{r,f_j,P_i}$ is a bounded Lipschitz domain, there is a continuous lifting operator $L_0$ from $W^{m-1/q,q}(\partial C_{r,f,P_i})$ to $W^{m,q}(C_{r,f_j,P_i})$.
 Since $g$ is supported on $C_{r'/2}\cap\Sigma_{ij}$, we take a cut off function $\varphi\in C_c^\infty(C_r)$ such that $\varphi\equiv1$ on $C_{r'/2}$ and $\operatorname{spt}\varphi\subset C_{r/2}$ and set $L_{ij}=\varphi L_0$.
\begin{lem} \label{LLi}
Let $\Omega$ be a $C^1$ splash domain with compact boundary $\hat{\Gamma}$.
 Then there is a continuous linear operator $L$ from $W^{m-1/q,q}(\hat{\Gamma})$ to $W^{m,q}(\Omega)$ for any $m\in\mathbb{N}$ such that $\gamma\left(L(g)\right)=g$.
 More precisely, there exists $C_L$ depending only on $C^1$-regularity such that
\[
    \left\|L(g)\right\|_{W^{m,q}(\Omega)} \le C_L\|g\|_{W^{m-1/q,q}(\hat{\Gamma})}
\]
for all $g\in W^{m-1,q}(\hat{\Gamma})$.
\end{lem}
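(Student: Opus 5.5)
The plan is to build $L$ by localization with the partition of unity $\{\varphi_{ij}\}_{(i,j)\in\Lambda}$ from the definition of the trace on $\hat{\Gamma}$, apply the local liftings $L_{ij}=\varphi L_0$ in each chart, and glue the pieces back into $\Omega$ through the charts $\iota_{j,P_i}$. For $g\in W^{m-1/q,q}(\hat{\Gamma})$, first decompose $g=\sum_{(i,j)\in\Lambda}\varphi_{ij}g$. Then pull each piece back to a function $g_{ij}:=(\varphi_{ij}g)\circ\iota_{j,P_i}$ on $\Sigma_{ij}$, supported in $C_{r'/2}\cap\Sigma_{ij}$ (shrinking $r'$ if needed, or, as is standard, replacing the support condition by $\operatorname{spt}\varphi_{ij}$ and adjusting the cutoff $\varphi$). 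Extend $g_{ij}$ by zero to the rest of $\partial C_{r,f_{j,P_i}}$; since the support stays away from the lateral and top/bottom parts of the cylinder boundary, this extension is in $W^{m-1/q,q}(\partial C_{r,f_{j,P_i}})$, and its norm is bounded by that of $g_{ij}$ on $\Sigma_{ij}$. Finally set
\[
L(g):=\sum_{(i,j)\in\Lambda}\bigl(L_{ij}g_{ij}\bigr)\circ\iota_{j,P_i}^{-1},
\]
where each summand lives on $\iota_{j,P_i}(C_{r,f_{j,P_i}})\subset\Omega$ and is extended by zero to the rest of $\Omega$. The sum is finite because $\Lambda$ is finite ($\Gamma$ is compact), so $L$ is linear.

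The key point for well-definedness is that each summand genuinely defines a $W^{m,q}$ function on $\Omega$, not on the non-embedded set $\Omega_\delta$. On $C_{r,f_{j,P_i}}$ the chart $\iota_{j,P_i}$ is just the rigid motion $x\mapsto Q_{j,P_i}x+P_i$ into $D_j(P_i)\subset\Omega$, which is a genuine subset of $\mathbb{R}^d$. The cutoff $\varphi$ makes $L_{ij}g_{ij}$ vanish near the lateral boundary $|x'|=r/2$ and near the top of the cylinder, so its zero extension to $\Omega$ is in $W^{m,q}(\Omega)$. Its norm is $\le C\|g_{ij}\|$, with $C$ depending on $\|\varphi\|_{C^m}$ and on $\|L_0\|$; the rigid motion preserves all Sobolev norms. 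At a self-intersection point the two branches $D_1(P),D_2(P)$ are different components of $\Omega\cap B_{2r}(P)$. Each summand lives on exactly one branch and is zero on the other, so no identification issue arises.

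Next I check $\gamma(L(g))=g$. By definition $\gamma w=\sum_{(k,l)}\gamma(\varphi_{kl}w)$, where the traces are taken on $\hat{\Gamma}$ chartwise. The trace of each summand of $L(g)$ on $\hat{\Gamma}$ is supported on the sheet of $\hat{\Gamma}$ belonging to chart $(i,j)$, and there it equals $\varphi g_{ij}=g_{ij}$, since $\varphi\equiv1$ on the support. Its trace on the other sheet through a self-intersection point is zero, because the summand vanishes on the other branch. Summing over $(i,j)$, and using $\sum\varphi_{kl}=1$ near $\hat{\Gamma}$ together with linearity and chart-independence of the trace, gives $\gamma L(g)=\sum\varphi_{ij}g=g$. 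The bound then follows from $\|L(g)\|_{W^{m,q}(\Omega)}\le\sum C\|g_{ij}\|_{W^{m-1/q,q}}\le C_L\|g\|_{W^{m-1/q,q}(\hat{\Gamma})}$, where the last step uses the equivalence of the chartwise (Besov) norm with the trace norm on $\hat{\Gamma}$.

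The main obstacle is the uniformity of $C_L$: it should depend only on the $C^1$ data $(r,\delta_0,K)$. I would argue that $L_0$ on $C_{r,f}$ has norm controlled by the Lipschitz character of $C_{r,f}$, which is bounded uniformly because $K_f\le K$ and $\delta_f\ge\delta_0$ (via Ne\v{c}as' construction with a bi-Lipschitz flattening $x_d\mapsto x_d-f(x')$). The partition of unity, the cutoff $\varphi$ and the number $|\Lambda|$ likewise depend only on $r$, $r'$ and the covering. A secondary subtlety is that for $m\ge2$ a merely Lipschitz flattening does not preserve $W^{m,q}$. If uniformity for higher $m$ is needed, I would use the harmonic-extension-type operator in the formula for $L(g)$ directly on the half space $\{x_d>0\}$ in flattened variables, and accept dependence on $C^m$ norms of $f$.
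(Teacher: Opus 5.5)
Your construction is the same as the paper's: decompose $g=\sum\varphi_{ij}g$, lift each piece in its chart by $L_{ij}=\varphi L_0$, push forward by $\iota_{j,P_i}$ and sum. You also supply details the paper dismisses as evident (taking $\varphi\equiv1$ on $\operatorname{spt}\varphi_{ij}$, checking that each summand is a genuine $W^{m,q}(\Omega)$ function living on a single branch at a self-intersection point, and verifying the chartwise trace identity), and your caveat that for $m\ge2$ the constant cannot depend only on $C^1$ data, so one must flatten with a $C^{m-1,1}$ map and accept dependence on higher norms of $f$, is well founded and matches the regularity actually available where the paper uses the lemma ($m=3$ with a $C^3$ boundary).
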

\begin{proof}
For a given $g\in W^{m-1/q.q}(\hat{\Gamma})$ we decompose
\[
	g=\sum_{(i,j)\in\Lambda} \varphi_{ij} g
\]
and define
\[
	L(g)= \sum_{(i,j)\in\Lambda} \iota_{\#j,P_i} L_{ij} \left(\iota_{j,P_i}^\#(\varphi_{ij}g) \right),
\]
where $(\iota_{\#j,P_i}\psi)(x)=\psi(\iota_{j,P_i}^{-1}x)$ for a function $\psi$ defined on $C_{r/2,j_j,P}$.
 Evidently, $L$ is continuous from $W^{m-1/q.q}(\hat{\Gamma})$ to $W^{m,q}(\Omega)$.
 By definition, it is easy to see that $\gamma\left(L(g)\right)=g$.
\end{proof}
We often call $L$ a lifting operator from $W^{m-1/q,q}(\hat{\Gamma})$ to $W^{m,q}(\Omega)$.

%%%%%%%%%%%%%%%%%%%%%%%%%%%%%%%%%%%%%%%%%%%%%%%%%
\subsection{Hanzawa transform based on a Sobolev function} \label{SSH} % Subsection 3.2

For a sufficiently small $\delta_1$, $\Omega_{\delta_1}$ has a normal coordinate system near $\hat{\Gamma}$, i.e.,
\[
	x = y + s \mathbf{n}(y), \quad
	y = \Pi(x), \quad y \in \hat{\Gamma}, \quad
	s \in \mathbb{R}
\]
so that the mapping $(y, s)\mapsto x$ is a $C^{k-1}$ ($k \ge 2$) diffeomorphism from
\[
    \left\{ (y, s) \bigm|
    y \in \hat{\Gamma},\ 
    | s | \le \delta_1 \right\}
\]
to a tubular neighborhood
\[
    \left\{x\in\Omega_\delta \bigm|
    d_{\Omega_\delta}(x,\hat{\Gamma})\le \delta_1 \right\}
\]
of $\hat{\Gamma}$.
 For a given $C^\ell$ ($1\le\ell\le k-1$) function $h$ on $\hat{\Gamma}$, we define
\begin{equation} \label{ENo}
    \Omega^h = \left\{ x = y + s \mathbf{n}(y)\in\Omega_{\delta_1} \bigm|
	y\in\hat{\Gamma},\ - \delta_1 \leq  s <h(y) \right\} \cup
	\left\{ x\in\Omega \bigm| d (x,\partial\Omega)>\delta_1 \right\},
\end{equation}
where $\|h\|_\infty\le\delta_1$ and $d(x,\partial\Omega)$ denotes the distance of $x$ from $\partial\Omega$.
 This is a $C^\ell$ domain in $\Omega_{\delta_1}$ but not included in $\mathbb{R}^d$ unless $h\le0$ on $\hat{\Gamma}$.

We shall introduce a diffeomorphism called the Hanzawa transform which maps $\Omega$ to $\Omega^h$.
 Instead of $h\in C^\ell$ we consider a Sobolev function.
 For $h\in W^{m-1/q,q}(\hat{\Gamma})$, $m\in\mathbb{N}$ we see that $L(h)\in W^{m,q}(\Omega)$ by Lemma~\ref{LLi}.
 For $q>d$, then by the Sobolev embedding $Lh=L(h)\in C^{m-1}(\Omega\cup\hat{\Gamma})$ so that $h\in C^{m-1}(\hat{\Gamma})$.
 We take $m\ge2$ so that $\nabla_{\hat{\Gamma}}h$ is at least continuous.
 For $y\in\Omega$, we set
\begin{equation} \label{EHan}
	\Xi_h(y) := y+\xi_h(y), \quad
    \xi_h(y):=\chi\left(\frac{d(y,\partial\Omega)}{\delta_1} \right)(Lh)(y)\mathbf{n} \left(\Pi(y)\right)
\end{equation}
where $\chi\in C_c^\infty(\mathbb{R})$ is a cut off function such that $\chi(s)=1$ for $|s|\le1/2$ $\chi(s)=0$ for $|s|\ge1$ and $0\le\chi\le1$.
 Note that if $d(y,\partial\Omega)>\delta_1$ then $\Xi_h(y)=y$.
 By definition, we have the following regularity result.
\begin{prop} \label{PReH}
Assume that $\hat{\Gamma}$ is $C^k$ ($k\ge2$) and compact.
 Then there exists a constant $C$ depending only on $(r,\delta_0,K)$, $q \in(1,\infty)$ and $k$ such that
\[
    \|\xi_h\|_{W^{k,q}(\Omega)} \le C\|h\|_{W^{k-1/q,q}(\hat{\Gamma})}.
\]
\end{prop}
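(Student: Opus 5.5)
The plan is to expand $\xi_h$ by the Leibniz rule and bound each term by Lemma~\ref{LLi}, with $C^k$ regularity of $\hat{\Gamma}$ entering only through the cut-off and the normal field $\mathbf{n}\circ\Pi$. I am not able to close the top-order term under the stated $C^k$ hypothesis alone; the possible gap is identified in the last paragraph.

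First I localize by the partition of unity $\{\varphi_{ij}\}$ of Section~\ref{SSS}. Since $\Gamma$ is compact, $\Lambda$ is finite. On each patch $\iota_{j,P_i}(C_{r/2}\cap C_{r,f_{j,P_i},\delta})$ I work in flat coordinates, where $\hat{\Gamma}$ is the graph $x_d=f_{j,P_i}(x')$ with $\|f_{j,P_i}\|_{C^k}\le K$. On the support of $\chi(d(\cdot,\partial\Omega)/\delta_1)$, i.e.\ the tubular region $\{d\le\delta_1\}$, I use three facts. The distance function $d$ is $C^k$ with $C^k$ norm bounded by a constant depending only on $(r,\delta_0,K)$ (cf.\ \cite[14.6]{GT}, applied patchwise on $\Omega_\delta$). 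Hence $\chi(d/\delta_1)\in C^k$ with the same type of bound. Finally $\mathbf{n}\circ\Pi=-\nabla d$ lies in $C^{k-1}$, again with bounds depending only on $(r,\delta_0,K)$. Away from the tubular region $\xi_h\equiv0$, so nothing needs to be estimated there.

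Next, for $|\alpha|\le k$ I expand
\[
\partial^\alpha\xi_h=\sum_{\beta+\gamma+\varepsilon=\alpha}c_{\beta\gamma\varepsilon}\,
\partial^\beta\!\left[\chi\left(\frac{d}{\delta_1}\right)\right]\,
\partial^\gamma(Lh)\,
\partial^\varepsilon\left(\mathbf{n}\circ\Pi\right).
\]
Every factor $\partial^\beta\chi(d/\delta_1)$ is bounded in $L^\infty$. Whenever $|\varepsilon|\le k-1$, the factor $\partial^\varepsilon(\mathbf{n}\circ\Pi)$ is bounded in $L^\infty$, so the term is controlled by $\|\partial^\gamma Lh\|_{L^q}\le\|Lh\|_{W^{k,q}(\Omega)}$. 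Lemma~\ref{LLi} with $m=k$ then gives $\|Lh\|_{W^{k,q}(\Omega)}\le C_L\|h\|_{W^{k-1/q,q}(\hat{\Gamma})}$. The constant depends only on $q$, $k$ and $(r,\delta_0,K)$, since $M$, the patches and the lifting constants depend only on these.

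The main obstacle is the single term with $|\varepsilon|=k$, namely $\chi\, Lh\,\partial^\alpha(\mathbf{n}\circ\Pi)$. Here the $k$-th derivatives of $\nabla d$ are $(k+1)$-st derivatives of $d$, and these are not controlled by a $C^k$ boundary. For this term I would first take $Lh$ in $L^\infty$: for $q>d$ this follows from Sobolev embedding and Lemma~\ref{LLi}. That reduces the problem to needing $\partial^\alpha(\mathbf{n}\circ\Pi)\in L^q$ on the tubular region with a bound depending only on the geometry. I would try to obtain this from the product structure $\mathbf{n}(\Pi(y))$: it is constant along normal lines, so only tangential derivatives in $y'$ appear, and these involve $k$ tangential derivatives of the normal of the graph of $f_{j,P_i}$. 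Stated honestly, this step needs $\nabla' f_{j,P_i}\in W^{k,q}$ locally, i.e.\ slightly more than $C^k$ in the top derivative. If that cannot be extracted from the stated $C^k$ hypothesis, the argument as I have planned it leaves a gap at exactly this term. The bound is then at best valid with the order shifted by one, which is the situation the paper uses anyway, since it works with $C^3$ domains and needs $\xi_h\in W^{2,q}$ there.
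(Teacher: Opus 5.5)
Your plan is the paper's own argument. The paper's proof of Proposition~\ref{PReH} is the one-line remark that the bound is clear from the product rule, once one admits $\nabla^k d\in L^\infty$ and the lifting estimate of Lemma~\ref{LLi}.

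The obstruction you isolate is genuine, and the paper's proof passes over it. A $C^k$ boundary gives exactly $\nabla^k d\in L^\infty$ on the tube. That controls every term with $|\varepsilon|\le k-1$. Your term $\chi(d/\delta_1)\,Lh\,\partial^\alpha(\mathbf{n}\circ\Pi)$ with $|\alpha|=k$, however, is a $(k+1)$-st derivative of $d$.

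No other argument can close it either, because the estimate is false under the stated hypothesis. Take $h\equiv c\neq0$, which lies in $W^{k-1/q,q}(\hat\Gamma)$ since it is the trace of $c\,\chi(d/\delta_1)$.
\begin{itemize}
\item Since $\chi(0)=1$, $\Pi$ is the identity on $\hat\Gamma$, and $\gamma(Lh)=h$, the trace of $\xi_h$ is $c\,\mathbf{n}$.
\item If $\xi_h\in W^{k,q}(\Omega)$, the definition \eqref{trDef:frSob} of the trace space would force $\mathbf{n}\in W^{k-1/q,q}(\hat\Gamma)$. In a chart this amounts to $\nabla' f\in W^{k-1/q,q}_{\mathrm{loc}}$.
\item Now insert, away from the self-intersection point, a boundary piece $x_d=f(x')$ with $f\in C^k$ but $\partial^k f\notin W^{1-1/q,q}_{\mathrm{loc}}$. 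An example is $\partial_{x_1}^k f$ equal, up to a cutoff, to the lacunary series $\sum_j 2^{-j\beta}\cos(2^j x_1)$ with $0<\beta<1-1/q$.
\end{itemize}
For this boundary the left-hand side is infinite while the right-hand side is finite.

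So the proposition needs one more derivative of the boundary. Either assume $\hat\Gamma\in C^{k+1}$, or keep $C^k$ and conclude only
\[
\|\xi_h\|_{W^{k-1,q}(\Omega)}\le C\|h\|_{W^{k-1-1/q,q}(\hat\Gamma)}.
\]
This is consistent with the Introduction, where a $C^k$ Hanzawa transform is asserted only for a $C^{k+1}$ boundary. It is also consistent with Section~\ref{SSL}, where $C^3$ is invoked precisely to get $\mathbf{n}\in C^2$ when controlling $\nabla^2 Z_h$.

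With $\hat\Gamma\in C^{k+1}$, your Leibniz expansion closes at once, and more simply than your proposed rescue. Bound the top term by $\|\nabla^{k+1}d\|_{L^\infty}\|Lh\|_{L^q}$. This needs neither the embedding $W^{k,q}\subset L^\infty$ nor $q>d$, which matters because the proposition is stated for all $q\in(1,\infty)$. Your rescue condition ($\nabla' f\in W^{k,q}$ locally) does point in the right direction: the trace argument shows that some extra regularity of $\mathbf{n}$ beyond $C^{k-1}$ is necessary.

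One minor correction: the number $M$ of patches is not determined by $(r,\delta_0,K)$, since it grows with the size of $\hat\Gamma$. The claimed dependence of $C$ should therefore come from the bounded overlap of the cover, not from the finiteness of $\Lambda$.
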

\begin{proof}
This is clear if we admit $\nabla^kd\in L^\infty(\hat{\Gamma})$ and
\[
    \|Lh\|_{W^{k,q}(\Omega)} \le C_L\|h\|_{W^{k-1/q,q}(\hat{\Gamma})}.
\]
\end{proof}

In the next subsection, we shall see that $\Xi_h$ is a diffeomorphism from $\Omega$ to $\Omega^h$ (up to the boundary) provided that $\|\nabla h\|_\infty$ is sufficiently small.
 This coordinate change is called the \emph{Hanzawa transform}; see e.g.\ \cite[Chapter~1, \S3.2~(a)]{PS}.

%%%%%%%%%%%%%%%%%%%%%%%%%%%%%%%%%%%%%%%%%%%%%%%%%
\subsection{Invertibility of the Hanzawa transform} \label{SSIH} % Subsection 3.3

We shall give a sufficient condition for $h$ so that the Hanzawa transform $\Xi_h$ is a diffeomorphism from $\Omega$ to $\Omega^h$ in the domain $\Omega_\delta$ with $\delta$-wing.

We begin with calculating the Jacobi matrix $\nabla\Xi_h$.
 We first notice that $\nabla d(y)=\mathbf{n}\left(\Pi(y)\right)$ for $y\in\Omega$ with $d(y)=d(y,\partial\Omega)<\delta_1$, where $\Pi(y)\in\hat{\Gamma}$.
 We use the convention that $(\nabla\Xi_h)_{ij}=\partial_{y_j}\Xi_h^i$.
 Then the Jacobi matrix $\nabla\Xi_h$ is of the form
\[
    \nabla\Xi_h = I+\nabla\xi_h, \quad
    \nabla\xi_h = N_h+D_h+R_h
\]
with
\begin{align*}
    &N_h(y) := \frac{1}{\delta_1} \chi' \left(\frac{d(y)}{\delta_1}\right)
    \mathbf{n}\left(\Pi(y)\right) \otimes \mathbf{n}\left(\Pi(y)\right)(Lh)(y), \\
    &D_h(y) := \chi \left(\frac{d(y)}{\delta_1}\right)
    \mathbf{n}\left(\Pi(y)\right) \otimes \nabla(Lh)(y), \\
    &R_h(y) := \chi \left(\frac{d(y)}{\delta_1}\right)
    \left(\nabla\mathbf{n}\left(\Pi(y)\right)\right)Lh(y),
\end{align*}
where $I$ denotes the identity matrix.
 The term $N_h$ is supported away from $\Gamma=\partial\Omega$ but near $\Gamma$.
 The term $D_h$ depends on derivative of $h$ and $R_h$ depends on the second fundamental form of $\hat{\Gamma}$.
\begin{lem} \label{LInv}
    The inverse matrix $(\nabla\Xi_h)^{-1}$ exists and it is of the form
\[
    (\nabla\Xi_h)^{-1}= \left(I-\frac{N_h+D_h}{K_h}\right)(I+R_h)^{-1}
\]
with a scalar function
\[
    K_h(y):= 1+\frac{1}{\delta_1}\chi' \left(\frac{d(y)}{\delta_1}\right) Lh(y)
    + \chi \left(\frac{d(y)}{\delta_1}\right) \nabla(Lh)(y)
    \cdot\mathbf{n}\left(\Pi(y)\right)
\]
provided that $K_h\neq0$ and $(I+R_h)^{-1}$ exists.
\end{lem}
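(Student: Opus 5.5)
The plan is to factor $I+\nabla\xi_h$ as $(I+N_h+D_h)(I+R_h)$ up to an error, and then invert the rank-one factor by the Sherman--Morrison formula.

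\textbf{Rank-one structure.} Write $\mathbf{n}=\mathbf{n}(\Pi(y))$ and $\chi=\chi(d(y)/\delta_1)$. Then
\[
N_h+D_h=\mathbf{n}\otimes a,\qquad a:=\tfrac{1}{\delta_1}\chi'\,(Lh)\,\mathbf{n}+\chi\,\nabla(Lh),
\]
so $N_h+D_h$ has rank at most one. Moreover $a\cdot\mathbf{n}=K_h-1$, with $K_h$ exactly as defined in the statement (using $|\mathbf{n}|=1$).

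\textbf{Key geometric identity.} Since $\mathbf{n}(\Pi(y))=\nabla d(y)$ and $|\nabla d|=1$, differentiating $|\mathbf{n}|^2=1$ gives $\mathbf{n}^T\nabla\mathbf{n}=0$. The Hessian of $d$ is symmetric, so $\nabla\mathbf{n}=\nabla^2 d$ also annihilates $\mathbf{n}$: $(\nabla\mathbf{n})\mathbf{n}=0$. Hence $R_h\mathbf{n}=0$ and $\mathbf{n}^TR_h=0$.

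\textbf{Factorization.} Consequently $(\mathbf{n}\otimes a)R_h=\mathbf{n}\otimes(R_h^Ta)$. I will try to show that
\[
I+\nabla\xi_h=(I+\mathbf{n}\otimes b)(I+R_h)
\]
for a suitable vector $b$. Expanding the right-hand side gives $I+R_h+\mathbf{n}\otimes b+\mathbf{n}\otimes(R_h^Tb)$. So we need $(I+R_h^T)b=a$, that is, $b=(I+R_h^T)^{-1}a$; this exists because $(I+R_h)^{-1}$ exists.

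Since $R_h^T\mathbf{n}=0$, we have $(I+R_h^T)\mathbf{n}=\mathbf{n}$, hence $(I+R_h^T)^{-1}\mathbf{n}=\mathbf{n}$. Therefore
\[
b\cdot\mathbf{n}=a\cdot(I+R_h)^{-1}\mathbf{n}=a\cdot\mathbf{n}=K_h-1,
\]
again using $R_h\mathbf{n}=0$.

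\textbf{Inversion.} The Sherman--Morrison formula gives
\[
(I+\mathbf{n}\otimes b)^{-1}=I-\frac{\mathbf{n}\otimes b}{1+b\cdot\mathbf{n}}=I-\frac{\mathbf{n}\otimes b}{K_h},
\]
valid when $K_h\ne0$. Hence
\[
(\nabla\Xi_h)^{-1}=(I+R_h)^{-1}\Big(I-\frac{\mathbf{n}\otimes b}{K_h}\Big).
\]

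\textbf{Main obstacle: reconciling with the stated order.} The stated formula has the factor $I-(N_h+D_h)/K_h$ on the left and $(I+R_h)^{-1}$ on the right, with $a$ rather than $b$. I will try to reconcile the two expressions via the identity
\[
(I+R_h)^{-1}(\mathbf{n}\otimes b)=\mathbf{n}\otimes b.
\]
This holds because $(I+R_h)^{-1}\mathbf{n}=\mathbf{n}$. Combined with $(\mathbf{n}\otimes b)(I+R_h)=\mathbf{n}\otimes a$, it should let me commute the factors into the stated form.

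If the exact stated ordering fails, the fallback is to verify the claimed inverse directly. Multiply $(I+N_h+D_h+R_h)$ on the right by the proposed expression and use $R_h\mathbf{n}=0$ and $\mathbf{n}^TR_h=0$. The key scalar step is $(\mathbf{n}\otimes a)(\mathbf{n}\otimes a)=(a\cdot\mathbf{n})\,\mathbf{n}\otimes a=(K_h-1)\,\mathbf{n}\otimes a$, which produces the cancellation through $1+(K_h-1)-K_h=0$.

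The step I expect to need the most care is controlling the cross terms $R_h(\mathbf{n}\otimes a)=0$ versus $(\mathbf{n}\otimes a)R_h\neq0$, which is exactly why the order of the factors in the statement matters.
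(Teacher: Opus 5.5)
Your proof is correct. The reconciliation you were unsure about closes in one line. Since $(I+R_h)^{-1}\mathbf{n}=\mathbf{n}$ and $\mathbf{n}\otimes b=(\mathbf{n}\otimes a)(I+R_h)^{-1}$, you get
\[
(I+R_h)^{-1}\Bigl(I-\frac{\mathbf{n}\otimes b}{K_h}\Bigr)=(I+R_h)^{-1}-\frac{\mathbf{n}\otimes b}{K_h}=\Bigl(I-\frac{N_h+D_h}{K_h}\Bigr)(I+R_h)^{-1}.
\]
The only real difference from the paper is the order in which you factor, and that choice causes all your extra work.

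The paper factors the other way, $\nabla\Xi_h=(I+R_h)(I+N_h+D_h)$. This is exact because of the identity $R_h(N_h+D_h)=(R_h\mathbf{n})\otimes a=0$, which you already derived. Formally, the paper writes $(I+N_h+D_h+R_h)^{-1}=(I+N_h+D_h)^{-1}\bigl(I+R_h(I+N_h+D_h)^{-1}\bigr)^{-1}$ and notes that $R_h(I+N_h+D_h)^{-1}=R_h$. Sherman--Morrison applied directly to $I+N_h+D_h$, with $1+a\cdot\mathbf{n}=K_h$, then gives the stated formula immediately. No auxiliary vector $b=(I+R_h^T)^{-1}a$ is needed, and there is no ordering issue to resolve.

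Your fallback, checking $(I+N_h+D_h+R_h)\bigl(I-\frac{N_h+D_h}{K_h}\bigr)=I+R_h$, is essentially the paper's computation. Your main route is a valid but roundabout variant. The cross-term asymmetry you identified, $R_h(\mathbf{n}\otimes a)=0$ but $(\mathbf{n}\otimes a)R_h\neq0$, is exactly what tells you to put $I+R_h$ on the left of the factorization, which puts $(I+R_h)^{-1}$ on the right of the inverse.
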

\begin{proof}
We note $N_h+D_h$ is a rank one matrix.
 More precisely,
\[
    N_h+D_h=a\otimes b
\]
with $a=\mathbf{n}\left(\Pi(y)\right)$ and
\[
    b=\frac{1}{\delta_1}\chi' \left(\frac{d}{\delta_1}\right)
    \mathbf{n}\left(\Pi(y)\right)(Lh)(y)
    +\chi\left(\frac{d}{\delta_1}\right)\nabla(Lh)(y).
\]
We recall an identity from linear algebra
\[
    (I+a\otimes b)^{-1}=I-\frac{a\otimes b}{1+a\cdot b}
    \quad\text{unless}\quad a\cdot b=-1;
\]
see e.g.\ \cite[Chapter~2, \S2.1]{PS} and observe that
\begin{equation} \label{ELin}
    (I+N_h+D_h)^{-1}
    = I-\frac{N_h+D_h}{K_h}
\end{equation}
since $a\cdot b+1=K_h$.
 Since $\nabla \mathbf{n}\left(\Pi(y)\right)=\nabla^2d(y)$ is symmetric and $\mathbf{n}\left(\Pi(y)\right)\cdot\nabla_y\mathbf{n}\left(\Pi(y)\right)=0$, we see that
\[
    R_h(N_h+D_h)=0.
\]
We now apply \eqref{ELin} to get
\begin{align*}
    (\nabla\Xi)^{-1}
    =(I+N_h+D_h+R_h)^{-1}
    &=(I+N_h+D_h)^{-1}
    \left(I+R_h(I+N_h+D_h)^{-1}\right)^{-1} \\
    &=\left\{I-\frac{N_h+D_h}{K_h}\right\}(I+R_h)^{-1}.
\end{align*}
The proof is now complete.
\end{proof}

We now discuss a sufficient condition of $h$ such that $\Xi_h$ is a diffeomorphism from $\Omega$ to $\Omega^h$.
 As in \cite[\S3.1]{Sh}, if $\|\nabla\xi_h\|_{L^\infty(\Omega)}\le\varepsilon_0$ with $\varepsilon_0<1$, then $\Xi_h$ is injective in $\Omega$.
 Indeed, for $x_i=y_i+\xi_h(y_i)$ $i=1,2$
\begin{align*}
    |x_1-x_2|
    =\left|y_1+\xi_h(y_1)-\left(y_2+\xi_h(y_2)\right)\right|
    &\ge |y_1-y_2|-\left|\xi_h(y_1)-\xi_h(y_2)\right| \\
    &\ge (1-\varepsilon_0)|y_1-y_2|,
\end{align*}
which implies the injectivity.
 The condition $\|\nabla\xi_h\|_\infty\le\varepsilon_0$ is fulfilled if
\[
    \|Lh\|_{L^\infty(\Omega)} \le \varepsilon_0.
\]
We write the inverse map $\Xi_h^{-1}(x)=y$ as
\[
    \Xi_h^{-1}(x) = x+\Phi_h(x).
\]
\begin{thm} \label{TInv}
Let $\Omega$ be a $C^2$ splash domain with compact boundary of type $(r,\delta_0,K)$ and let $\Omega_\delta$ be a domain with $\delta$-wing constructed from $\Omega$.
 Let $\hat{\Gamma}$ be the boundary of $\Omega$ in $\Omega_\delta$.
 Let $\delta_1$($<\delta_0$) be taken such that normal coordinate is available in $\delta_1$ neighborhood of $\hat{\Gamma}$ in $\Omega_\delta$.
 Assume that $Lh|_{\hat{\Gamma}}=h$.
 For any $\varepsilon_0\in(0,1/4)$ there is a positive constant $\varepsilon_1=\varepsilon_1(\delta_1,r,\delta_0,K,\varepsilon_0)$ such that $\Xi_h$ gives $C^1$ diffeomorphism from $\Omega$ to $\Omega^h$ provided that
\[
    \|Lh\|_{L^\infty(\Omega)}\le\varepsilon_1, \quad\|\nabla Lh\|_{L^\infty(\Omega)}\le\varepsilon_0.
\]
Moreover, $\varepsilon_1$ can be taken so that
\begin{equation} \label{EEPhi}
    |\nabla_x\Phi_h|(x)
    \le \left|Lh(y)\right|\left|\nabla_{\hat{\Gamma}}\mathbf{n}\left(\Pi(y)\right)\right|
    +4\left(\frac{\|\chi'\|_\infty}{\delta_1}
    \left|Lh(y)\right|+\left|\nabla Lh(y)\right| \right),
\end{equation}
where $x=\Xi_h(y)$.
\end{thm}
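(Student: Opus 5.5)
The plan is to combine three ingredients: a pointwise bound on $\nabla\xi_h$, the inverse function theorem applied chart by chart in $\Omega_\delta$, and a boundary-matching argument in normal coordinates showing $\Xi_h(\Omega)=\Omega^h$. The estimate \eqref{EEPhi} then follows from Lemma~\ref{LInv} by a Neumann series.

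\textbf{Step 1: smallness of $\nabla\xi_h$ and local invertibility.} From the decomposition $\nabla\xi_h=N_h+D_h+R_h$ we get pointwise
\[
|\nabla\xi_h(y)|\le \frac{\|\chi'\|_\infty}{\delta_1}|Lh(y)|+|\nabla Lh(y)|+|\nabla\mathbf{n}(\Pi(y))|\,|Lh(y)|,
\]
since $\chi\le1$ and $|\mathbf{n}|=1$. The curvature term $|\nabla\mathbf{n}|$ is bounded in terms of $(r,\delta_0,K)$ and $\delta_1$. We choose $\varepsilon_1$ so small that $\|R_h\|_\infty\le 1/4$ and $\frac{\|\chi'\|_\infty}{\delta_1}\varepsilon_1\le\varepsilon_0$. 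Then $|K_h-1|\le 2\varepsilon_0<1/2$, so $K_h\ge 1/2$, and $(I+R_h)^{-1}$ exists by a Neumann series. By Lemma~\ref{LInv}, $\nabla\Xi_h$ is invertible everywhere. Since $\xi_h$ is $C^1$ (recall $k\ge2$ and $Lh\in C^1$), the inverse function theorem makes $\Xi_h$ a local $C^1$ diffeomorphism into the flat manifold $\Omega_\delta$.

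\textbf{Step 2: global injectivity and surjectivity onto $\Omega^h$.} Global injectivity cannot be read off directly from $|\nabla\xi_h|<1$, because $\Omega_\delta$ is not convex and not embedded. I would instead argue in normal coordinates.
\begin{itemize}
\item On $\{d(y)<\delta_1\}$, the point $y=z+s\mathbf{n}(z)$ with $z\in\hat\Gamma$ and $s\in(-\delta_1,0]$ is mapped to $z+\bigl(s+\chi(-s/\delta_1)Lh(y)\bigr)\mathbf{n}(z)$. Thus $\Xi_h$ preserves each normal fibre $\{z+s\mathbf{n}(z)\}$, which is well defined in $\Omega_{\delta_1}$ because the normal coordinate is available there.
\item Along the fibre through $z$, the map $s\mapsto s+\chi(-s/\delta_1)Lh(z+s\mathbf{n}(z))$ has derivative $K_h\ge 1/2$, so it is strictly increasing.
\item At $s=0$ it takes the value $h(z)$, using $Lh|_{\hat\Gamma}=h$. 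At $s=-\delta_1$ it is the identity.
\end{itemize}
Hence the fibre segment $[-\delta_1,0]$ is mapped bijectively onto $[-\delta_1,h(z)]$. Outside the $\delta_1$-collar, $\Xi_h$ is the identity. Combining the two regions gives a bijection from $\Omega$ (with $\hat\Gamma$) onto $\Omega^h$ with its boundary, as defined in \eqref{ENo}. Together with Step 1, this shows $\Xi_h$ is a $C^1$ diffeomorphism. The main obstacle is this step: making precise that the fibres do not interact through the non-embedded overlap. This holds because all points are taken in $\Omega_\delta$, not their projections by $\pi$, and positive reach makes $(z,s)$ a genuine coordinate there.

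\textbf{Step 3: the estimate \eqref{EEPhi}.} Write $x=\Xi_h(y)$. Then $\nabla_x\Phi_h(x)=(\nabla\Xi_h(y))^{-1}-I$. By Lemma~\ref{LInv},
\[
(\nabla\Xi_h)^{-1}-I=\bigl[(I+R_h)^{-1}-I\bigr]-\frac{N_h+D_h}{K_h}(I+R_h)^{-1}.
\]
For the second term, $|(N_h+D_h)/K_h|\le 2\bigl(\frac{\|\chi'\|_\infty}{\delta_1}|Lh|+|\nabla Lh|\bigr)$ and $|(I+R_h)^{-1}|\le 4/3\le 2$. This gives the factor $4$.

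For the first term, the crude Neumann-series bound $|(I+R_h)^{-1}-I|\le |R_h|/(1-|R_h|)$ is slightly weaker than the stated coefficient $1$ in front of $|Lh||\nabla_{\hat\Gamma}\mathbf{n}|$. To obtain coefficient exactly $1$, I would use the symmetric structure of $R_h=\chi Lh\,\nabla^2 d$. In the principal frame, $R_h$ is diagonal with entries $\lambda_i=\chi Lh\,\kappa_i$, so $(I+R_h)^{-1}-I$ is diagonal with entries $-\lambda_i/(1+\lambda_i)$. Matching $|\lambda_i|$ exactly would require absorbing the loss $|\lambda_i|^2/(1+\lambda_i)$ into the slack in the second term by shrinking $\varepsilon_1$. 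This absorption is the delicate point: it works only when $|\nabla Lh|$ or $|Lh|$ controls $|Lh|^2|\nabla\mathbf{n}|^2$. Failing that, the argument yields \eqref{EEPhi} with a harmless constant such as $4/3$ in front of the curvature term.
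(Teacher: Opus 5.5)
Your proof is correct, and Step~2 takes a genuinely different route from the paper. The paper reduces everything to $\sup|\nabla_x\Phi_h|<1$ via Lemma~\ref{LInv} and a Neumann series. It gets injectivity from the perturbation-of-identity estimate $|x_1-x_2|\ge(1-\varepsilon_0)|y_1-y_2|$ stated just before the theorem, and identifies the image with $\Omega^h$ by noting that $\hat\Gamma$ is sent to $\Gamma_h$. That is economical, since one bound does double duty.

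However, its Lipschitz step $|\xi_h(y_1)-\xi_h(y_2)|\le\varepsilon_0|y_1-y_2|$ breaks down exactly across a self-intersection. Let $\hat P_1,\hat P_2\in\hat\Gamma$ be the two preimages of $P$ and let $y_j\in D_j(P)$ tend to $P$. Then $|y_1-y_2|\to0$, while $\xi_h(y_1)-\xi_h(y_2)\to\bigl(h(\hat P_1)+h(\hat P_2)\bigr)\mathbf{n}(\hat P_1)$. In any case $x_1-x_2$ has no meaning in $\Omega_\delta$.

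Your fibrewise argument is intrinsic to $\Omega_\delta$ and avoids this. Since $R_h\mathbf{n}=0$, one has $\nabla\Xi_h\,\mathbf{n}=K_h\mathbf{n}$, so the fibre map has derivative $K_h>0$. It fixes $s=-\delta_1$ and sends $s=0$ to $h(z)$, which gives a bijection onto exactly the set \eqref{ENo}. Together with the invertibility of $\nabla\Xi_h$, this yields the diffeomorphism. Its only extra inputs are two. The first is the hypothesis that normal coordinates are valid in the $\delta_1$-neighborhood of $\hat\Gamma$ in $\Omega_\delta$. The second is $\varepsilon_1<\delta_1$, which follows automatically from $\frac{\|\chi'\|_\infty}{\delta_1}\varepsilon_1\le\varepsilon_0$ because $\|\chi'\|_\infty\ge2$.

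In Step~3 your decomposition is the paper's, and your doubt about the coefficient $1$ is justified. The paper passes from $\sum_{j\ge1}(-R_h)^j$ to $|R_h|$ while noting only $\sum_{j\ge1}\|R_h\|_\infty^j\le1$; the honest Neumann bound is $|R_h|/(1-|R_h|)$. The loose end closes easily, though, with no principal frame and no fallback constant.

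Write $(I+R_h)^{-1}-I=-R_h+R_h^2(I+R_h)^{-1}$. With your $\|R_h\|_\infty\le1/4$, the first term is at most $|R_h|+\frac43|R_h|^2$. Moreover $|R_h|^2\le\varepsilon_1\|\nabla_{\hat\Gamma}\mathbf{n}\|_\infty^2\,|Lh(y)|$, simply because $|Lh|\le\varepsilon_1$.

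Your second term is at most $\frac83\bigl(\frac{\|\chi'\|_\infty}{\delta_1}|Lh|+|\nabla Lh|\bigr)$. This leaves slack of at least $\frac43\frac{\|\chi'\|_\infty}{\delta_1}|Lh|$ below the factor $4$. Requiring in addition $\varepsilon_1\le\|\chi'\|_\infty/(\delta_1\|\nabla_{\hat\Gamma}\mathbf{n}\|_\infty^2)$ absorbs the quadratic excess and yields \eqref{EEPhi} exactly as stated. So the condition you called delicate is always met after shrinking $\varepsilon_1$, which the theorem permits.
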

\begin{proof}
It suffices to prove that $\|\nabla_x\Phi_h\|_\infty<1$.
 By Lemma~\ref{LInv} we see that
\begin{align}
    \nabla_x\Phi_h(x)
    &=\left(I-\frac{N_h+D_h}{K_h}\right) (I+R_h)^{-1}-I \notag \\
    &=\left(I-\frac{N_h+D_h}{K_h}\right) \left((I+R_h)^{-1}-I \right)
    +\left(I-\frac{N_h+D_h}{K_h} \right) - I \notag \\
    &=\left(I-\frac{N_h+D_h}{K_h}\right) \sum_{j=1}^\infty(-R_h)^j
    -\frac{N_h+D_h}{K_h} \notag \\
    &= \sum_{j=1}^\infty(-R_h)^j
    -\frac{N_h+D_h}{K_h} \sum_{j=0}^\infty(-R_h)^j \label{EInv}
\end{align}
provided that $\|R_h\|_\infty<1$ and $K_h>0$.
We take $\varepsilon_1$ small, i.e.,
\[
    0<\varepsilon_1\le\min\left(\frac{1}{4C_0}, \frac12\frac{1}{\|\nabla_{\hat{\Gamma}}\mathbf{n}\|_\infty}\right), \quad
    C_0=\frac{\|\chi'\|_\infty}{\delta_1}
\]
to conclude that
\begin{gather*}
    K_h\ge1-\frac14 -\varepsilon_0\ge\frac12, \\
    \|R_h\|_\infty\le\varepsilon_1\|\nabla_{\hat{\Gamma}}\mathbf{n}\|_\infty\le\frac12.
\end{gather*}
Note that $\left|(N_h+D_h)(y)\right|\le C_0\left|Lh(y)\right|+\left|\nabla Lh(y)\right|$.
 By \eqref{EInv}, we see that
\begin{align*}
    \left|\nabla_x\Phi_h(x)\right|
    &\le |Rh|+ \frac{1}{K_h} \left(C_0 \left|Lh(y)\right|+\left|\nabla Lh(y)\right|\right) \cdot2 \\
    &\le |R_h| + 2\cdot2 \left(C_0 \left|Lh(y)\right|+\left|\nabla Lh(y)\right|\right)
\end{align*}
since $\|R_h\|_\infty\le1/2$ implies
\[
    \sum_{j=1}^\infty \|R_h\|_\infty^j
    \le \sum_{j=1}^\infty (1/2)^j=1.
\]
This yields \eqref{EEPhi} since $|R_h|\le\left|Lh(y)\right|\left|\nabla_{\hat{\Gamma}}\mathbf{n}\left(\Pi(y)\right)\right|$.
 Thus
\[
    \left|\nabla_x\Phi_h(x)\right|
    \le\varepsilon_1\|\nabla_{\hat{\Gamma}}\mathbf{n}\|_\infty
    +4C_0\varepsilon_1+4\varepsilon_0.
\]
We take  $\varepsilon_1$ small so that
\[
    \varepsilon_1 \|\nabla_{\hat{\Gamma}}\mathbf{n}\|_\infty
    +4C_0\varepsilon_1
    <1-4\varepsilon_0
\]
to get
\[
    \sup_{x\in\Xi_h(\Omega)}\left|\nabla_x\Phi_h(x)\right| < 1.
\]
We now conclude that $\Xi_h$ is injective and its Jacobi matrix $\nabla\Xi_h$ is invertible so it is a $C^1$ diffeomorphism from $\Omega$ to its image.
 Since it is a diffeomorphism, the boundary $\hat{\Gamma}$ maps to $\Gamma_h=\left\{ y+h(y)\mathbf{n}(y)\bigm| y\in\hat{\Gamma}\right\}$, so $\Omega$ maps to $\Omega^h$.
\end{proof}
\begin{cor} \label{CInv}
Assume the same hypotheses of Theorem~\ref{TInv} concerning $\Omega$.
 Assume that $h_i$ ($i=1,2$) satisfies assumption of $h$ in Theorem~\ref{TInv} with constants $\varepsilon_0$, $\varepsilon_1$.
 Then
\begin{equation} \label{EDiff}
    \left| \nabla_x \Phi_{h_1}(x)-\nabla_x\Phi_{h_2}(x)\right|
    \le C_1 \left(\left| L(h_1-h_2)(y)\right|
    + \left|\nabla L(h_1-h_2)(y)\right|\right)
\end{equation}
with some constant $C_1$ depending only on $\varepsilon_0$, $\varepsilon_1$, $\delta_1$, $\|\chi'\|_\infty$ and $\Omega$ through $(r,\delta_0,K)$.
\end{cor}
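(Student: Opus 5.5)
We plan to work entirely from the explicit representation \eqref{EInv} obtained in the proof of Theorem~\ref{TInv}. At the common point $y$, where $x=\Xi_{h_1}(y)$ (and, as in \eqref{EEPhi}, we compare the two inverse Jacobians at the same base point $y$), we write
\[
    \nabla_x\Phi_{h_i} = F(R_{h_i}) - \frac{N_{h_i}+D_{h_i}}{K_{h_i}}\,G(R_{h_i}),
    \qquad
    F(R)=\sum_{j\ge1}(-R)^j,\quad G(R)=\sum_{j\ge0}(-R)^j=(I+R)^{-1}.
\]
The key observation is that every building block is \emph{linear} in $Lh$ and $\nabla Lh$. Indeed, with $d=d(y)$ and $\mathbf{n}=\mathbf{n}(\Pi(y))$, each of $R_h$, $N_h$, $D_h$ and $K_h-1$ is a fixed bounded coefficient times $Lh(y)$ or $\nabla Lh(y)$. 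Since $L$ is linear, subtracting the expressions for $h_1$ and $h_2$ gives
\begin{align*}
    |R_{h_1}-R_{h_2}| &\le \|\nabla_{\hat{\Gamma}}\mathbf{n}\|_\infty\,|L(h_1-h_2)(y)|, \\
    |(N_{h_1}+D_{h_1})-(N_{h_2}+D_{h_2})| &\le C_0|L(h_1-h_2)(y)|+|\nabla L(h_1-h_2)(y)|, \\
    |K_{h_1}-K_{h_2}| &\le C_0|L(h_1-h_2)(y)|+|\nabla L(h_1-h_2)(y)|.
\end{align*}

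Next we use the uniform bounds already secured in the proof of Theorem~\ref{TInv} for both $h_i$: namely $\|R_{h_i}\|_\infty\le 1/2$, $K_{h_i}\ge 1/2$, and $|N_{h_i}+D_{h_i}|\le C_0\varepsilon_1+\varepsilon_0\le 1$.

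For the Neumann series we use the resolvent identity
\[
    (I+R_1)^{-1}-(I+R_2)^{-1}=(I+R_1)^{-1}(R_2-R_1)(I+R_2)^{-1},
\]
which gives $|G(R_{h_1})-G(R_{h_2})|\le 4|R_{h_1}-R_{h_2}|$. Since $F(R)=G(R)-I$, the same bound holds for $F$.

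For the quotient we write
\[
    \frac{A_1}{K_1}-\frac{A_2}{K_2}=\frac{A_1-A_2}{K_1}+A_2\,\frac{K_2-K_1}{K_1K_2},
\]
which is bounded by $2|A_1-A_2|+4|A_2|\,|K_1-K_2|$. Finally, the product difference is handled by the telescoping
\[
    a_1b_1-a_2b_2=(a_1-a_2)b_1+a_2(b_1-b_2),
\]
using $|G|\le 2$ and $|(N+D)/K|\le 2$.

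Collecting these estimates yields \eqref{EDiff} with $C_1$ depending only on $\|\nabla_{\hat{\Gamma}}\mathbf{n}\|_\infty$ (controlled by $(r,\delta_0,K)$), $C_0=\|\chi'\|_\infty/\delta_1$, $\varepsilon_0$ and $\varepsilon_1$.

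The main obstacle is the meaning of the base point. The two inverse maps are evaluated at the same $x$, which corresponds to different preimages $y_1=\Xi_{h_1}^{-1}(x)$ and $y_2=\Xi_{h_2}^{-1}(x)$, whereas the right-hand side of \eqref{EDiff} is written at a single $y$. Our plan is to read the statement, as in \eqref{EEPhi}, with all quantities attached to the common Lagrangian point $y$, i.e.\ to compare $(\nabla\Xi_{h_i})^{-1}(y)$. If the genuinely Eulerian comparison at a fixed $x$ is intended, an additional term $|\nabla_x\Phi_{h_2}(\Xi_{h_2}(y_1))-\nabla_x\Phi_{h_2}(x)|$ appears. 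This term would require $C^2$ control of $Lh_2$ together with $|y_1-y_2|\le (1-\varepsilon_0)^{-1}|\xi_{h_1}(y_1)-\xi_{h_2}(y_1)|\le C|L(h_1-h_2)(y_1)|$. We would handle it in that case by this mean-value argument, at the cost of letting $C_1$ also depend on $\|\nabla^2 Lh_2\|_\infty$.
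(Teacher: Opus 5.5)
Your proposal is correct and is essentially the paper's argument. Both start from \eqref{EInv}, use that $R_h$, $N_h$, $D_h$, $K_h-1$ are linear in $(Lh,\nabla Lh)$ at a common base point $y$ (the paper also implicitly compares the two inverse Jacobians at the same $y$), and combine the bounds $\|R_{h_i}\|\le 1/2$, $K_{h_i}\ge 1/2$ with difference estimates for $(I+R_h)^{-1}$ and $K_h^{-1}$; the only cosmetic difference is that the paper uses commutativity of $R_{h_1},R_{h_2}$ (both multiples of $\nabla\mathbf{n}(\Pi(y))$) where you use the resolvent identity. Your representation $\nabla_x\Phi_h=-R_h(I+R_h)^{-1}-K_h^{-1}(N_h+D_h)(I+R_h)^{-1}$ is the accurate form of \eqref{EInv}: the paper's \eqref{EInv2} drops the $j=0$ term $-(N_{h_i}+D_{h_i})/K_{h_i}$, although its estimates \eqref{EED2}--\eqref{EED4} cover that term anyway.
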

\begin{proof}
By \eqref{EInv} we observe that
\begin{equation} \label{EInv2}
    \nabla_x \Phi_{h_i}(x)
    =-R_{h_i}I_{h_i}
    + \frac{N_{h_i}+D_{h_i}}{K_{h_i}}R_{h_i}I_{h_i}, \quad
    I_{h_i}=(I+R_h)^{-1}.
\end{equation}
Since $R_{h_1}R_{h_2}=R_{h_2}R_{h_1}$, matrices $I_{h_1}$, $I_{h_2}$, $R_{h_1}$, $R_{h_2}$ are commutative.
 We thus observe that
\[
    I_{h_1}-I_{h_2}
    =(R_{h_2}-R_{h_1})I_{h_1}I_{h_2}.
\]
By the choice of parameters as in Theorem~\ref{TInv}, $I_{h_i}$ is estimated as
\[
    \|I_{h_i}\|_\infty \le 1+1=2.
\]
We now obtain that
\begin{equation}
\begin{aligned} \label{EED1}
    |R_{h_1}I_{h_i}-R_{h_2}I_{h_2}|
    &\le |R_{h_1}-R_{h_2}||I_{h_1}|
    +|R_{h_2}||I_{h_1}-I_{h_2}| \\
    &\le 2|R_{h_1}-R_{h_2}|+4|R_{h_1}-R_{h_2}| \\
    &\le 6\left|\nabla\mathbf{n}\left(\Pi(y)\right)\right|
    \left|L(h_1-h_2)\right|(y),\quad
    x=\Xi_h(y).
\end{aligned}
\end{equation}
By definition, we see that
\begin{align}
    &|N_{h_1}-N_{h_2}|
    \le \frac{1}{\delta_1}\|\chi'\|_\infty L(h_1-h_2), \label{EED2} \\
    &|D_{h_1}-D_{h_2}|
    \le \left|\nabla L(h_1-h_2)\right|. \label{EED3}
\end{align}
Since
\[
    K_{h_1}^{-1}-K_{h_2}^{-1}
    =(K_{h_2}-K_{h_1})/(K_{h_1}K_{h_2})
\]
and
\[
    |K_{h_i}^{-1}|\le2,
\]
by the choice of the parameter, we observe that
\begin{equation}
\begin{aligned}
\label{EED4}
    |K_{h_1}^{-1}-K_{h_2}^{-1}|
    &=\left|K_{h_1}^{-1}K_{h_2}^{-1}(K_{h_2}-K_{h_1})\right|
    \le 4|K_{h_2}-K_{h_1}| \\
    &\le 4\left( \frac{1}{\delta_1}\|\chi'\|_\infty \left|L(h_1-h_2)\right|
    +\left|\nabla L(h_1-h_2)\right|\right).
\end{aligned}
\end{equation}
We collect these estimates \eqref{EED1}--\eqref{EED4} to conclude the desired estimates for the difference $|\nabla_x\Phi_{h_1}-\nabla_x\Phi_{h_2}|$.
\end{proof}

%%%%%%%%%%%%%%%%%%%%%%%%%%%%%%%%%%%%%%%%%%%%%%%%%
\subsection{Time derivative of $\Phi_h$} \label{SSTi} % Section 3.4

We are interested in time derivative of $\Phi_h$ when $h$ depends on time.
 In the case $h$ depends on time we consider $\Xi_{h(t)}(y)$.
 In this subsection, we also write it $\Xi_h(y,t)$.
 We use similar notation for $Z_{h(t)}$ and $\Phi_{h(t)}$.
\begin{lem} \label{LTDH}
Let $(p,q)$ satisfy $1\le p,q\le\infty$.
 Assume that
\[
    \partial_t(Lh)\in L^p\left(0,T;W^{2,q}(\Omega)\right).
\]
Then
 \[
    \partial_t\Phi_h(x,t) = \chi\left(\frac{d(y)}{S_1}\right)
    \partial_t(Lh)K_h^{-1}\mathbf{n}\left(\Pi(y)\right)
\]
so that
\[
    \partial_t\Phi_h \in L^p\left(0,T;W^{2,q}(\Omega)\right)
\]
provided that $K_h^{-1}\in L^p\left(0,T;W^{2,\infty}(\Omega)\right)$.
\end{lem}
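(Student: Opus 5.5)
Differentiate the identity $\Xi_h(y,t)+\Phi_h(\Xi_h(y,t),t)=y$ in $t$ with $y$ fixed, then read off the normal direction of the inverse Jacobian from Lemma~\ref{LInv}. With $x=\Xi_h(y,t)$ the chain rule gives
\[
\partial_t\Xi_h(y,t)+\nabla_x\Phi_h(x,t)\,\partial_t\Xi_h(y,t)+\partial_t\Phi_h(x,t)=0 .
\]
Since $I+\nabla_x\Phi_h(x,t)=(\nabla\Xi_h)^{-1}(y,t)$, this becomes
\[
\partial_t\Phi_h(x,t)=-(\nabla\Xi_h)^{-1}(y,t)\,\partial_t\Xi_h(y,t).
\]
By \eqref{EHan},
\[
\partial_t\Xi_h(y,t)=\chi\bigl(d(y)/\delta_1\bigr)\,\partial_t(Lh)(y,t)\,\mathbf{n}(\Pi(y)),
\]
because $\Pi$, $d$ and $\mathbf n$ do not depend on $t$. (I read the $S_1$ in the statement as $\delta_1$.)

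Next I evaluate $(\nabla\Xi_h)^{-1}\mathbf n$ using Lemma~\ref{LInv}. From the proof of Lemma~\ref{LInv}, $\nabla\mathbf n(\Pi(y))=\nabla^2 d(y)$ is symmetric and $(\nabla^2 d)\mathbf n=\tfrac12\nabla|\nabla d|^2=0$. Hence $R_h\mathbf n=0$, so $(I+R_h)^{-1}\mathbf n=\mathbf n$. Also $(N_h+D_h)\mathbf n=(a\otimes b)\mathbf n=(b\cdot\mathbf n)\,\mathbf n=(K_h-1)\mathbf n$. Therefore
\[
(\nabla\Xi_h)^{-1}\mathbf n=\Bigl(1-\frac{K_h-1}{K_h}\Bigr)\mathbf n=K_h^{-1}\mathbf n,
\]
and
\[
\partial_t\Phi_h(x,t)=-\chi\bigl(d(y)/\delta_1\bigr)\,\partial_t(Lh)\,K_h^{-1}\,\mathbf n(\Pi(y)).
\]
This is the formula in the statement, except that the derivation gives a minus sign. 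I expect the statement either drops that sign or uses the opposite convention for $\Phi_h$; in either case the sign does not affect the regularity claim. Smallness of $h$ as in Theorem~\ref{TInv} ensures $K_h\ge1/2$, so every expression above is well defined.

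For the regularity claim, write $\partial_t\Phi_h$ as the product of $\partial_t(Lh)\in L^p(0,T;W^{2,q}(\Omega))$ with the factors $\chi(d/\delta_1)\,\mathbf n\circ\Pi\in W^{2,\infty}$ (valid for $\hat\Gamma\in C^3$) and $K_h^{-1}\in L^p(0,T;W^{2,\infty})$. Since $W^{2,\infty}$ multiplies $W^{2,q}$ boundedly, Hölder in time gives the result. If $K_h^{-1}$ is only in $L^p$ in time, one needs it in $L^\infty$ in time instead; I would note that $K_h^{-1}$ is bounded uniformly in time by the smallness assumption, and interpret the hypothesis accordingly.

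The result should also be stated in the variable $x$, so one must transfer from $y$ to $x$ via composition with $Z_h=\Xi_h^{-1}$. This preserves $W^{2,q}$ because $\Xi_h$ is a $C^1$ diffeomorphism with $W^{2,\infty}$ Jacobian control. I expect this composition step, together with the bookkeeping of the integrability exponents in time, to be the main technical point; the algebra above is straightforward.
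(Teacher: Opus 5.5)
Your proposal is correct and follows the paper's proof essentially verbatim: you differentiate $Z_h(\Xi_h(y,t),t)=y$ in $t$, then use Lemma~\ref{LInv} with $(I+R_h)^{-1}\mathbf{n}=\mathbf{n}$ and $(N_h+D_h)\mathbf{n}=(K_h-1)\mathbf{n}$ to get $(\nabla\Xi_h)^{-1}\mathbf{n}=K_h^{-1}\mathbf{n}$. The minus sign you obtain also appears in the final display of the paper's own proof (the statement drops it, and $S_1$ is indeed $\delta_1$); your remark that $K_h^{-1}$ must be bounded in $L^\infty$ rather than $L^p$ in time for the product to lie in $L^p(0,T;W^{2,q})$ is a fair correction of a point the paper passes over with ``follows from this formula and our assumptions.''
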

\begin{proof}
Applying $\partial_t$ to the identity $Z_h\left(\Xi_h(y,t),t\right)=y$, we obtain
\[
    (\partial_t\Phi_h)(x,t)=(\partial_t Z_h)(x,t)
    =-(\nabla_x Z_h)(x,t) \partial_t\Xi_h.
\]
By Lemma~\ref{LInv},
\[
    \nabla_x Z_h= \left(I-\frac{N_h+D_h}{K_h}\right)(I+R_h)^{-1}.
\]
Since $\mathbf{n}\cdot\nabla\mathbf{n}=0$ so that $R_h\cdot\mathbf{n}\left(\Pi(y)\right)=0$, we see that $(I+R_h)^{-1}\mathbf{n}\left(\Pi(y)\right)=\mathbf{n}\left(\Pi(y)\right)$.
 Thus
\[
    \nabla_x Z_h(x,t)\cdot\mathbf{n}\left(\Pi(y)\right)
    = \left(I-\frac{N_h+D_h}{K_h}\right)\mathbf{n}.
\]
Since
\[
    (N_h+D_h)\mathbf{n}\left(\Pi(y)\right)
    = (K_h-1)\mathbf{n}\left(\Pi(y)\right),
\]
we now observe that
\[
    \nabla_x Z_h(x,t)\cdot\mathbf{n}\left(\Pi(y)\right)
    = K_h^{-1}\mathbf{n}\left(\Pi(y)\right).
\]
Since
\[
    \partial_t \Xi_h=\chi\left(\frac{d}{\delta_1}\right)
    \partial_t(Lh)(y,t)\mathbf{n}\left(\Pi(y)\right),
\]
we now conclude that
\[
    (\partial_t \Xi_h)(x,t)
    =-K_h^{-1}\mathbf{n}\left(\Pi(y)\right)
    \chi\left(\frac{d}{\delta_1}\right)\partial_t(Lh).
\]
The property $\partial_t\Phi_h\in L^p\left(0,T;W^{2,q}(\Omega)\right)$ follows from this formula and our assumptions.
\end{proof}

%%%%%%%%%%%%%%%%%%%%%%%%%%%%%%%%%%%%%%%%%%%%%%%%%
\subsection{Estimates of difference of $\Phi_h$} \label{SSDiff} % Subsection 3.5

We next apply Corollary~\ref{CInv} to get an estimate of the difference when $h$ is time-dependent.
 The function we shall consider for $h$ in the next section is
\[
    X_T:=L^p( 0,T;W^{3-1/q,q}(\hat{\Gamma}) )
    \cap W^{1,p}( 0,T;W^{2-1/q,q}(\hat{\Gamma}) ).
\]
By the property of the lifting operator $L$ stated in Lemma~\ref{LLi}, $h\in X_T$ implies that
\[
    Lh\in L^p\left(0,T;W^{3,q}(\Omega)\right)
    \cap W^{1,p}\left(0,T;W^{2,q}(\Omega)\right)
    =: H_T
\]
and $h\mapsto Lh$ is continuous.
 By definition of Besov space (or standard interpolation), if $h\in X_T$, then
\[
    h\in C( [0,T),B_{q,p}^{3-1/p-1/q}(\hat{\Gamma}) ).
\]
By the Sobolev embedding for the Besov space (see e.g.\ \cite{Saw}), we know that
\[
    \|h\|_{W^{2,\infty}}\le C\|h\|_{B_{q,p}^{3-1/p-1/q}(\hat{\Gamma})}
\]
provided that $2/p+d/q<1$.
 Similarly, if $Lh\in H_T$, then
\[
    Lh\in C( [0,T),B_{q,p}^{3-2/p}(\Omega) )
\]
so that
\[
    \|Lh\|_{W^{2,\infty}}
    \le C\|Lh\|_{B_{q,p}^{3-2/p}(\Omega)}
\]
for $2/p+d/q<1$.
 Thus if $\|h\|_{X_T}\le\varepsilon$, then
\[
    \|Lh\|_{W^{2,\infty}}
    \le c\varepsilon
\]
with some $c$ depending only on $C^3$-regularity of $\hat\Gamma$.
 We have proved the following statement.
\begin{prop} \label{PSL}
Let $\Omega$ be a $C^3$ splash domain with compact boundary of type $(r,\delta_0,K)$.
 If $\|h\|_{X_T}\le\varepsilon$ for sufficiently small $\varepsilon>0$, i.e., $\varepsilon<\varepsilon_*=\varepsilon_*(r,\delta_0,K)$, then \eqref{EEPhi} holds provided that $2/p+d/q<1$.
\end{prop}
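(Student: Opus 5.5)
The plan is to reduce Proposition~\ref{PSL} to Theorem~\ref{TInv}. Theorem~\ref{TInv} already yields \eqref{EEPhi} for every $h$ with $\|Lh\|_{L^\infty(\Omega)}\le\varepsilon_1$ and $\|\nabla Lh\|_{L^\infty(\Omega)}\le\varepsilon_0$. So it suffices to show that $\|h\|_{X_T}\le\varepsilon$ forces both pointwise smallness conditions uniformly for $t\in[0,T)$, once $\varepsilon$ is small. I will fix $\varepsilon_0\in(0,1/4)$ once and for all. Then $\varepsilon_1=\varepsilon_1(\delta_1,r,\delta_0,K,\varepsilon_0)$ is given by Theorem~\ref{TInv}, and $\delta_1$ itself depends only on the type $(r,\delta_0,K)$ through the reach of $\hat\Gamma$.

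The first step is the lifting. By Lemma~\ref{LLi}, applied with $m=3$ and $m=2$ and combined with the linearity of $L$ in time, the map $h\mapsto Lh$ is bounded from $X_T$ into $H_T$. The bound is $\|Lh\|_{H_T}\le C_L\|h\|_{X_T}$, with $C_L$ depending only on the $C^3$ type of $\hat\Gamma$.

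The second step is the trace method for the time variable. This gives $H_T\hookrightarrow C([0,T);B^{3-2/p}_{q,p}(\Omega))$. For an exterior $\Omega$ we have $\Omega\cup\hat\Gamma$ locally flattened by the charts $\iota_{j,P_i}$ and $\Omega$ uniformly regular, so we may extend to $\mathbb{R}^d$ (respectively work in the charts) and use the Besov embedding $B^{3-2/p}_{q,p}\hookrightarrow W^{2,\infty}$. This embedding is valid because $3-2/p-d/q>2$, that is, $2/p+d/q<1$.

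Chaining the two steps gives $\sup_t\|Lh(t)\|_{W^{2,\infty}(\Omega)}\le c\,\varepsilon$, with $c=c(r,\delta_0,K,p,q)$. In particular $\|Lh\|_\infty$ and $\|\nabla Lh\|_\infty$ are both bounded by $c\varepsilon$. Setting $\varepsilon_*:=\min(\varepsilon_1,\varepsilon_0)/c$, every $\varepsilon<\varepsilon_*$ puts $h(t)$ within the hypotheses of Theorem~\ref{TInv} for each $t$. That theorem then yields $\Xi_{h(t)}$ as a $C^1$ diffeomorphism onto $\Omega^{h(t)}$ together with the pointwise estimate \eqref{EEPhi}. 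One point needs checking: Theorem~\ref{TInv} uses the assumption $Lh|_{\hat\Gamma}=h$, which is exactly $\gamma(L(h))=h$ from Lemma~\ref{LLi}.

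The main obstacle is the uniformity of the embedding constant $c$ in $T$. Time-trace embeddings for $W^{1,p}(0,T)\cap L^p(0,T)$ usually carry constants that blow up as $T\to0$ unless one restricts to functions with fixed (e.g.\ zero) initial data, or extends in time with a $T$-independent extension. My first attempt will be to accept a constant depending on a lower bound $T\ge T_0$, or to include $\|h(0)\|_{B^{3-1/p-1/q}_{q,p}}$ in the smallness assumption. Either way, $\varepsilon_*$ then depends additionally on $p$, $q$ (and possibly $T_0$), which I will note explicitly. A secondary point is that $\hat\Gamma$ is only immersed. The embeddings hold chart by chart via the finite partition of unity $\{\varphi_{ij}\}$ from Section~\ref{SSS}, so compactness of $\hat\Gamma$ and finiteness of $\Lambda$ keep all constants controlled by $(r,\delta_0,K)$.
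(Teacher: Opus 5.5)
Your proposal is correct and follows the paper's own route. You lift $h$ into $H_T$ by Lemma~\ref{LLi}, use the time-trace embedding $H_T\hookrightarrow C([0,T);B^{3-2/p}_{q,p}(\Omega))$, then apply the Besov embedding into $W^{2,\infty}$ under $2/p+d/q<1$, and finally invoke Theorem~\ref{TInv} once $c\varepsilon\le\min(\varepsilon_0,\varepsilon_1)$. The Besov embedding must be done chart by chart: near a self-intersection point no global extension to $\mathbb{R}^d$ exists when $q>d$, so your ``extend to $\mathbb{R}^d$'' option should be dropped in favour of the chart version.

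Your caveat about the $T$-dependence of the trace constant is well founded. The paper treats $c$ as depending only on the $C^3$ type of $\hat\Gamma$ without comment. A $T$-uniform bound in fact needs either $T$ bounded below or the $h(0)$-term, as in the paper's later estimate \eqref{IntPo2:Bdy}.
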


We next consider the difference estimate when $h\in X_T$.
\begin{lem} \label{LDif}
Under the same hypotheses of Proposition~\ref{PSL} concerning $\Omega$, $p$, $q$ and $\varepsilon_*$.
 Assume that $\|h_i\|_{X_T}<\varepsilon_*$ for $i=1,2$ with $h_1(0)=h_2(0)$.
 Then
\[
    \|\nabla_x\Phi_{h_1(t)}-\nabla_x\Phi_{h_2(t)}\|_{L^\infty\left(0,T;L^\infty(\Omega)\right)}
    \le CT^{1/p'}C\left\|\partial_t(h_1-h_2)\right\|_{L^p( 0,T;W^{2-1/q,q}(\hat{\Gamma}) )}
\]
where $1/p'+1/p=1$ and $p\in(2,\infty]$.
\end{lem}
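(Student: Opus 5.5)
The argument combines the pointwise difference estimate \eqref{EDiff} of Corollary~\ref{CInv} with the fundamental theorem of calculus in time, using $h_1(0)=h_2(0)$. Since $\|h_i\|_{X_T}<\varepsilon_*$, Proposition~\ref{PSL} (with $2/p+d/q<1$) gives $\|Lh_i(t)\|_{W^{2,\infty}(\Omega)}\le c\varepsilon_*$ for every $t\in[0,T)$. Choosing $\varepsilon_*$ small makes both $h_1(t)$ and $h_2(t)$ satisfy the hypotheses of Theorem~\ref{TInv} uniformly in $t$. Hence Corollary~\ref{CInv} applies at each fixed $t$ with a constant $C_1$ independent of $t$:
\[
    \left|\nabla_x\Phi_{h_1(t)}(x)-\nabla_x\Phi_{h_2(t)}(x)\right|
    \le C_1\left(\left|L(h_1-h_2)(y,t)\right|+\left|\nabla L(h_1-h_2)(y,t)\right|\right).
\]
Strictly, $x$ corresponds to different points $y_i=Z_{h_i(t)}(x)$ for $i=1,2$. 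The difference estimate in Corollary~\ref{CInv} should therefore be read as evaluated in the $y$-variable (the paper's convention). Alternatively, one splits off a term $|\nabla\Phi_{h_2}(\Xi_{h_1}(y))-\nabla\Phi_{h_2}(\Xi_{h_2}(y))|$. This term is bounded by $\|\nabla^2\Phi_{h_2}\|_\infty|Lh_1-Lh_2|$, which is controlled because $Lh_2\in W^{2,\infty}$. Either way, taking the supremum over $x$ reduces the claim to bounding $\|L(h_1-h_2)(t)\|_{W^{1,\infty}(\Omega)}$ uniformly in $t$.

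For the time integration, set $w=h_1-h_2$, so that $w(0)=0$ and $Lw(t)=\int_0^t\partial_s Lw(s)\,ds$; here linearity of $L$ gives $\partial_t Lw=L\partial_t w$. Since $q>d$, the Sobolev embedding $W^{2,q}(\Omega)\hookrightarrow W^{1,\infty}(\Omega)$ and Lemma~\ref{LLi} with $m=2$ yield
\[
    \|Lw(t)\|_{W^{1,\infty}(\Omega)}
    \le \int_0^t \|L\partial_s w(s)\|_{W^{2,q}(\Omega)}\,ds
    \le C C_L\int_0^T\|\partial_s w(s)\|_{W^{2-1/q,q}(\hat\Gamma)}\,ds .
\]
H\"older's inequality in time then gives the factor $T^{1/p'}\|\partial_t w\|_{L^p(0,T;W^{2-1/q,q}(\hat\Gamma))}$. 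Combining this with the first step produces the asserted bound, with constant $C_1CC_L$.

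I expect the main obstacle to be the bookkeeping of base points, namely that $\Phi_{h_1}$ and $\Phi_{h_2}$ are compared at the same $x$ while \eqref{EDiff} is phrased via $y$. The extra composition term must be handled as described above. This needs second derivatives of $\Phi_{h_2}$, which is why the $W^{2,\infty}$ control from Proposition~\ref{PSL} (coming from $p>2$ and $2/p+d/q<1$) is used, together with $\nabla^2\mathbf n$ being bounded for a $C^3$ boundary. The remaining steps are routine.
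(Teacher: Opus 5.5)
Your proposal is correct and follows essentially the same route as the paper: apply Corollary~\ref{CInv} at each fixed $t$, use $h_1(0)=h_2(0)$ to write $L(h_1-h_2)(t)=\int_0^t L\partial_s(h_1-h_2)\,ds$, and conclude with $W^{2,q}(\Omega)\hookrightarrow W^{1,\infty}(\Omega)$ for $q>d$, Lemma~\ref{LLi}, and H\"older's inequality in time. Your extra remarks are points the paper passes over rather than a change of method: first, that \eqref{EDiff} compares the two maps at the same $y$, or else one splits off a term bounded by $\|\nabla^2\Phi_{h_2}\|_\infty|L(h_1-h_2)|$; second, that H\"older yields the $L^p$-norm with its $1/p$-th power, which the paper's displayed chain drops.
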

\begin{proof}
This follows from Corollary~\ref{CInv} by estimating
\begin{align*}
    \|L\bar{h}\|_{W^{1,\infty}(\Omega)}(t)
    &\le \int_0^T \|\partial_s L\bar{h}\|_{W^{1,\infty}}\,ds \\
    &\le C\int_0^T \|\partial_s L\bar{h}\|_{W^{2,q}}\,ds \quad \text{(since}\ q>d\text{)} \\
    &\le CT^{1/p'} \int_0^T \|\partial_s L\bar{h}\|_{W^{2,q}}^p\,ds
\end{align*}
for $\bar{h}=h_1-h_2$ with $C$ depending only on $\Omega$.
\end{proof}

%%%%%%%%%%%%%%%%%%%%%%%%%%%%%%%%%%%%%%%%%%%%%%%%%
\section{Local well-posedness in a domain with $\delta$-wing and the existence of collapse} \label{SL} % Section 4

In this section, we shall state a locally-in-time unique existence theorem for the free boundary problem to \eqref{ENS}, \eqref{EIn} in a domain with $\delta$-wing.
 We then apply this result to show the existence of collapse stated in Theorem~\ref{TMain}.

%%%%%%%%%%%%%%%%%%%%%%%%%%%%%%%%%%%%%%%%%%%%%%%%%
\subsection{Local well-posedness} \label{SSL} % Section 4.1

Let $\Omega$ be a $C^k$ ($k\ge2$) splash domain in $\mathbb{R}^d$ with compact boundary.
 Let $\Omega_\delta$ be a domain with $\delta$-wing constructed from $\Omega$.
 Let $\hat{\Gamma}$ be the boundary of $\Omega$ in $\Omega_\delta$.
 We use a normal coordinate in a $\delta_1$-neighborhood of $\hat{\Gamma}$ of the form \eqref{ECor}.
 As noted in a previous section, this is $C^{k-1}$ coordinate change.
 For a given function $h\in C^1(\hat{\Gamma})$ with $\|h\|_\infty\le\delta_1$ we define a domain
\[
    \Omega^h=\left\{ x=y+s\mathbf{n}(y)\in\Omega_\delta \bigm|
    y\in\hat{\Gamma},\ 
    -\delta_1\le s<h(y)\right\}
    \cup \left\{ x\in\Omega \bigm|
    \operatorname{dist}(x,\partial\Omega)<
    \delta_1\right\}
\]
in $\Omega_{\delta_1}$.
 As we observed in the previous section, the Hanzawa transform $\Xi_h$ is a $C^{k-1}$ diffeomorphism from $\Omega$ to $\Omega^h$ (up to the boundary) provided that $\|\nabla h\|_\infty$ is small, say $\|\nabla h\|_\infty\le\varepsilon_0$.
 We notice that constants $\delta_1$, $\varepsilon_0$ depend on $\Omega$ only through its $C^2$-regularity of $\hat{\Gamma}$ i.e., the type $(r,\delta_0,K)$ of the boundary for $k=2$.
 For a function $f$ defined on $\Omega^h$ let $f_h^\#$ denote its pull-back by $\Xi_h$, i.e.,
\[
    f_h^\#(y)=f\left(\Xi_h(y)\right), \quad
    y\in\Omega.
\]
Of course,
\[
    f(x)=f_h^\#\left(Z_h(x)\right), \quad
    x\in\Omega,
\]
where $Z_h=\Xi_h^{-1}$.
 If $h$ is time dependent and $f$ is time dependent, we still write
\[
    f_h^\#(y,t)=f\left(\Xi_{h(t)}(y),t\right), \quad
    y\in\Omega^h, \quad
    t\in(0,T).
\]
For a given function $h_0$ on $\hat{\Gamma}$ with $\|h_0\|_\infty\le\delta_1$, let $\partial_\delta\Omega^{h_0}$ denote the boundary of $\Omega^{h_0}$ in $\Omega_\delta$ and $\mathbf{n}_{h_0}$ denotes its exterior unit normal vector field in $\Omega_\delta$.

We prepare function spaces.
 For $p,q\in(1,\infty)$ we set
\begin{align*}
    &X_T:=L^p( 0,T;W^{3-1/q,q}(\hat{\Gamma}) )
    \cap W^{1,p}( 0,T;W^{2-1/q,q}(\hat{\Gamma}) ) \\
    &Y_T:=L^p\left(0,T;W^{2,q}(\Omega)\right)
    \cap W^{1,p}\left(0,T;L^q(\Omega)\right).
\end{align*}
The trace space $W^{m-1/q,q}(\hat{\Gamma})$ can be interpreted as a Besov space $B_{q,q}^{m-1/q}$.
 Let
\[
    \hat{W}_0^{1,q}(\Omega)
    = \left\{ \varphi\in L_{loc}^q(\Omega) \bigm|
    \nabla\varphi\in L^q(\Omega),\ \varphi|_{\hat{\Gamma}}=0 \right\}.
\]
We set
\[
    Z_T:=L^p( 0,T;W^{1,q}(\Omega)
    + \hat{W}_0^{1,q}(\Omega) ).
\]
\begin{thm} \label{TLW}
Let $\Omega$ be a $C^3$ splash domain in $\mathbb{R}^d$ with compact boundary (of type $(r,\delta_0,K)$).
 Assume that $2/p+d/q<1$, $2<p<\infty$, $d<q<\infty$.
 (Let $\varepsilon_0$ and $\delta_1$ be given as above.)
 Then for $B>0$, there exists (small) $T=T(B,p,q,d,r,\delta_0,K)>0$ and (small) $\varepsilon=\varepsilon(p,q,d,r,\delta_0,K)>0$ such that if
\[
    \|h_0\|_{B_{q,p}^{3-1/p-1/q}(\hat{\Gamma})}\le\varepsilon, \quad
    \|u_0\|_{B_{q,p}^{2(1-1/p)}}\le B
\]
and $u_0$ satisfies the compatibility condition
\begin{equation} \label{ECP}
    \operatorname{div}u_0=0
    \quad \text{in}\quad \Omega^{h_0}
    \quad\text{and}\quad
    (I-\mathbf{n}_{h_0}\otimes \mathbf{n}_{h_0})\mathbb{D}(u_0)\mathbf{n}_{h_0}=0
    \quad\text{on}\quad \partial_\delta \Omega^h
\end{equation}
then there exists a unique solution $(u,\varpi,\Omega^{h(t)})$ to \eqref{ENS}, \eqref{EIn} in $(0,T)$ with $u|_{t=0}=u_0$ and $\Omega|_{t=0}=\Omega^{h_0}$ which satisfies
\[
    h\in X_T, \quad
    u_h^\#\in Y_T, \quad
    \varpi_h^\#\in Z_T
    \quad\text{and}\quad
    \|\nabla h\|_\infty\le\varepsilon_0, \quad
    \|h\|_\infty\le\delta_1
\]
(so that $\sup_{0\le t<T}\|\nabla\Phi_{h(t)}\|_\infty(t)<1$). 
 Moreover,
\[
    \|h\|_{X_T}
    + \|u_h^\#\|_{Y_T}
    + \|\varpi_h^\#\|_{Z_T} \le CB
\]
with $C=C(p,q,d,r,\delta_0,K)$.
\end{thm}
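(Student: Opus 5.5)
The plan is to reduce \eqref{ENS}, \eqref{EIn} on $\Omega^{h(t)}$ to a quasilinear system on the fixed reference manifold $\Omega$ via the Hanzawa transform $\Xi_{h(t)}$. Then solve this system by a contraction argument in $X_T\times Y_T\times Z_T$, following Shibata \cite{Sh} essentially verbatim. The only new input is that every analytic tool he uses is local and remains valid on the domain with $\delta$-wing $\Omega_\delta$.

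\textbf{Transformed system.} Setting $v=u_h^\#$ and $\pi=\varpi_h^\#$, and using $\nabla_x=(I+\nabla\Phi_h)^T\nabla_y$ with $\nabla\Phi_h$ given by Lemma~\ref{LInv} and Theorem~\ref{TInv}, the problem becomes
\[
\partial_t v-\mu\Delta v+\nabla\pi=f(v,h),\quad \operatorname{div}v=g(v,h)=\operatorname{div}\tilde g(v,h)\quad\text{in }\Omega,
\]
\[
\partial_t h-v\cdot\mathbf{n}=d(v,h),\qquad \mathbb{S}(v,\pi)\mathbf{n}-\sigma(\Delta_{\hat\Gamma}h)\mathbf{n}=k(v,h)\quad\text{on }\hat\Gamma .
\]
Here the kinematic condition is rewritten through Lemma~\ref{LTDH}, and the right-hand sides collect the nonlinear terms. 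Each right-hand side is at least bilinear in $(\nabla\Phi_h,\nabla^2\Phi_h,\partial_t\Phi_h)$ and $(v,\nabla v,\pi)$, or is superlinear in $h$ through the curvature of $\Gamma_h$. The linear curvature part uses the Laplace--Beltrami operator of $\hat\Gamma$, which is $C^1$ since $\hat\Gamma$ is $C^3$.

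\textbf{Linear theory on $\Omega_\delta$.} The key claim is maximal $L^p$--$L^q$ regularity for the linearized Stokes problem with the free boundary condition and the equation for $h$ on $\Omega$, with constants depending only on $(r,\delta_0,K)$, $p$, $q$, $d$. I would prove it as in \cite{Sh}: localize with the partition of unity $\{\varphi_{ij}\}$ subordinate to the charts $\iota_{j,P_i}$ of \eqref{ELS}, together with an interior cutoff. In each chart the problem becomes a bent half-space problem, since a self-intersection point is simply seen twice in different charts $j=1,2$ and never as a two-sided configuration. The bent half-space problem is a small perturbation of the flat one because $K$ can be made small by Remark~\ref{RIm}(i\hspace{-0.1em}i\hspace{-0.1em}i). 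For an exterior $\Omega$ there is also the whole-space problem away from $\hat\Gamma$. Gluing produces lower-order commutators, absorbed for short time or by a large spectral shift. The pressure requires unique solvability of the weak Dirichlet problem in $\hat W_0^{1,q}(\Omega)$. For $\Omega$ bounded or exterior with compact $C^3$ boundary this follows from the standard argument, since $\Omega$ itself is an honest domain of $\mathbb{R}^d$; only its boundary is seen through $\hat\Gamma$, and traces are taken on $\hat\Gamma$ by Lemma~\ref{LLi}. I expect this step, and in particular checking that no constant depends on global embeddedness, to be the main obstacle. Concretely, one must verify the lifting and extension operators, the Besov trace spaces, and the Dirichlet problem all in the chart-wise form set up in Sections~\ref{SD}--\ref{SH}.

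\textbf{Nonlinear estimates and fixed point.} I would work in the set $\{(h,v,\pi): \|h\|_{X_T}+\|v\|_{Y_T}+\|\pi\|_{Z_T}\le CB,\ h(0)=h_0,\ v(0)=u_0\}$. Since $2/p+d/q<1$, Proposition~\ref{PSL} gives $\|Lh\|_{W^{2,\infty}}\le c\|h\|_{X_T}$, so Theorem~\ref{TInv} applies. One then checks that $\|\nabla h\|_\infty\le\varepsilon_0$ and $\|h\|_\infty\le\delta_1$ persist on $(0,T)$ once $\varepsilon$ and $T$ are small. This uses the fact that $h-h_0$ is small in $W^{2,\infty}$ for small $T$, with $\|h(t)-h_0\|\lesssim T^{1/p'}\|\partial_th\|$. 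The right-hand sides are estimated by products of small factors: either $\|\nabla\Phi_h\|_\infty\lesssim\varepsilon+$ (a positive power of $T$), or quantities bounded by $T^{1/p'}$ times norms. This shows the map sends the ball into itself. Its contraction property follows from Corollary~\ref{CInv} and Lemma~\ref{LDif}, because both iterates share the same $h(0)$. The compatibility condition \eqref{ECP} is exactly what allows $u_0$ in $B_{q,p}^{2(1-1/p)}$ to be used as initial data in the linear theory. Uniqueness follows from the contraction, and the final bound by $CB$ follows from the linear estimate.
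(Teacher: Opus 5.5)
There is a genuine gap in how you treat the kinematic condition. You write it as $\partial_t h-v\cdot\mathbf{n}=d(v,h)$ and put the transport term $v\cdot\nabla_{\hat{\Gamma}}h$ into $d(v,h)$ as an ``at least bilinear'' perturbation.

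This term is of principal order in $X_T$. The gradient $\nabla_{\hat\Gamma}h$ has exactly the regularity $L^p(0,T;W^{2-1/q,q}(\hat\Gamma))$ of $\partial_t h$. Hence the top-order part of $\|v\cdot\nabla_{\hat\Gamma}h\|_{L^p(0,T;W^{2-1/q,q}(\hat\Gamma))}$ is only bounded by $\|v\|_{L^\infty}\,\|h\|_{L^p(0,T;W^{3-1/q,q}(\hat\Gamma))}$. Neither factor is small:
\begin{itemize}
\item $v(0)=u_0^\#$, so $\|v\|_{L^\infty}$ is of size $B$;
\item the second factor is the top-order part of the $X_T$-norm, so no power of $T$ can be extracted from it;
\item no factor $\nabla\Phi_h$ appears.
\end{itemize}
Inserting this into the maximal regularity estimate produces a term $CB\|h\|_{X_T}$ on the right-hand side. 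It can be absorbed only if $B$ is small. So your fixed point closes only for small initial velocity, while the theorem allows arbitrary $B$ with $T=T(B,\dots)$.

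The missing idea, which the paper takes from Shibata, is to keep a frozen transport velocity inside the linear operator (see \eqref{EKi}, where $u^\#\cdot\nabla_{\hat\Gamma}h$ is kept on the left). You cannot freeze at $u_0$ itself, because a coefficient in $B^{2(1-1/p)}_{q,p}$ is not regular enough for the linear theory. The paper instead uses $u_\zeta=\zeta^{-1}\int_0^\zeta T(s)u_0^\#\,ds$, which satisfies \eqref{EAU} with $\|u_\zeta\|_{W^{2-1/s,s}}\le M_0\zeta^{-b_0}$. The linear problem \eqref{EGS} then contains $U_\zeta\cdot\nabla_{\hat\Gamma}h$. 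Its maximal regularity, Theorem~\ref{TShMR}, is proved by localizing and freezing $U_\zeta$ to a constant in each chart. This is why $\gamma_*$ depends on $M_0$ and $b_0$, and why the estimate carries the factor $e^{2\gamma\zeta^{-b_0}T}$.

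Only $(u_\zeta-u^\#)\cdot\nabla_{\hat\Gamma}h$ goes to the right-hand side, as in \eqref{Eki2}. It becomes small once $\zeta$ is fixed small and then $T$ is taken small depending on $\zeta$. Your ``linear theory'' paragraph must therefore cover this variable-coefficient transport term, not just the Stokes problem with surface tension.

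Two secondary points. First, the stress boundary data $\mathcal{J}$ must be controlled as a bulk lift in $H^{1/2,p}(\mathbb{R};L^q)\cap L^p(\mathbb{R};W^{1,q})$. This requires:
\begin{itemize}
\item the time extensions of $u$ and $h$;
\item the product estimate \eqref{ProR2:qp1h};
\item the bounds on $\hat{\mathcal{A}}_h$ and $\partial_t\hat{\mathcal{A}}_h$ in Lemma~\ref{Est:Ah:FBC}.
\end{itemize}
The pointwise and $L^\infty$ bounds of Corollary~\ref{CInv} and Lemma~\ref{LDif} do not control a half time-derivative. Second, the contraction gives uniqueness only inside the ball. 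With $U_\zeta\neq0$ the paper notes that uniqueness requires an additional a priori estimate.
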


The compatibility condition $\operatorname{div}u_0=0$ in \eqref{ECP} is rather formal notation.
 As we discuss later, it means that $u_0$ belongs to a solenoidal space $J_q (\Omega^{h_0})$ which is defined in Subsection~\ref{SSMax}.

When the boundary $\Omega$ is embedded, this is contained in a general result \cite[Theorem~6.1]{Sh} as a special case.
 He considered a general uniformly $C^3$ domain with embedded boundary not necessarily compact.
 As we later sketch his proof, his result extends to a splash domain.
 Here we state the case that $\hat{\Gamma}$ is compact to simplify the statement.\\

\vspace{0.7em}
\noindent{\bf Comparison of assumptions.}
 In \cite[Theorem~6.1]{Sh}, it is assumed that inside of $\Omega$ has a finite covering \cite[Definition~2.3]{Sh}.
 Of course, this assumption is trivially fulfilled if $\Omega$ is an exterior or a bounded domain.
 A key assumption is unique solvability of weak Dirichlet problem for index $q$ and $q'=q/(q-1)$, which is the conjugate exponent of $q$.
 This assumption is fulfilled for a $C^1$ splash domain with compact boundary.
\begin{lem} \label{LWD}
Let $\Omega$ be a $C^1$ splash domain with compact boundary $\hat{\Gamma}$ in $\Omega_\delta$.
 Assume that $1<q<\infty$.
 Then for any $f\in\left(L^q(\Omega)\right)^d$, there exists a unique solution $u\in\hat{W}_{q,0}^1(\Omega)$ satisfying
\[
    \int_\Omega \nabla u\cdot\nabla\varphi\,dx
    = \int_\Omega f\cdot\nabla\varphi\,dx
    \quad\text{for all}\quad
    \varphi\in\hat{W}_{q',0}^1(\Omega).
\]
Moreover the operator $f\mapsto\nabla u$ is bounded in $\left(L^q(\Omega)\right)^d$, i.e.,
\[
    \|\nabla u\|_{\left(L^q(\Omega)\right)^d}
    \le C\|f\|_{\left(L^q(\Omega)\right)^d}
\]
with some $C=C(\Gamma)$.
\end{lem}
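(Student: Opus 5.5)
The plan is to reduce the weak Dirichlet problem in the splash domain $\Omega$ (with boundary $\hat{\Gamma}$ understood in $\Omega_\delta$) to the known theory for bounded/exterior domains with embedded $C^1$ boundary by localization. I treat existence and uniqueness separately, via the standard equivalence: unique solvability for $q$ and for $q'$ both follow once we have the a priori estimate
\[
\|\nabla u\|_{L^q(\Omega)}\le C\sup\Bigl\{\Bigl|\int_\Omega \nabla u\cdot\nabla\varphi\,dx\Bigr| \Bigm| \varphi\in\hat W^1_{q',0}(\Omega),\ \|\nabla\varphi\|_{L^{q'}}=1\Bigr\}
\]
for all $u\in\hat W^1_{q,0}(\Omega)$, together with the analogous estimate with $q$ and $q'$ interchanged. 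Existence then follows by duality (closed range plus trivial kernel of the adjoint).

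First step: localization. Near each point of $\hat\Gamma$ I use the charts $\iota_{j,P_i}$ from \eqref{ELS} and the partition of unity $\{\varphi_{ij}\}$ from Section~\ref{SSS}. I complete it with a function $\varphi_0$ supported in $\{x\in\Omega \mid d_{\Omega_\delta}(x,\hat\Gamma)>\delta/2\}$, so that $\varphi_0+\sum\varphi_{ij}=1$ on $\Omega$. The crucial observation is that each $\varphi_{ij}u$ is supported in a single sheet $Q_{j,P_i}C_{r,f_{j,P_i}}+P_i$: the two sheets meeting at a self-intersection point are different charts and are never mixed. Hence $\varphi_{ij}u$ is a function on a bent half space with a $C^1$ graph boundary, vanishing on that boundary.

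Second step: local estimates. The interior piece $\varphi_0u$ and the bent half-space pieces satisfy the standard $L^q$ estimates for the weak Dirichlet problem. These come from the whole space, and from the half space after flattening, with perturbation by the small $K$ from Remark~\ref{RIm}(iii), which holds when $r$ is small. The commutator terms are of the form $u\nabla\varphi_{ij}$ and are lower order. They are supported in a compact region $\Omega\cap B_R$.

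Third step: remove the lower-order terms, first for bounded $\Omega$. There $\hat W^1_{q,0}(\Omega)$ functions vanish on $\hat\Gamma$, so a Poincaré inequality holds, and a compactness (Rellich) argument on $\Omega\cap B_R$ absorbs the lower-order terms provided uniqueness holds. Uniqueness means: if $\int\nabla u\cdot\nabla\varphi=0$ for all admissible $\varphi$, then $u$ is harmonic in $\Omega$ with zero trace on $\hat\Gamma$. By local regularity in each chart, $u$ is smooth enough to apply the maximum principle (bounded case) or to test with $u$ itself after regularity raises the integrability, and we get $u=0$.

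Third step, exterior case. This needs decay at infinity. Here I would first cut off at a large ball and use the whole-space result, then argue as in \cite{Gal} that $u$ tends to a constant at infinity. That constant must be zero: by the definition of the space, $u$ is determined only by $\nabla u$ and its zero trace on $\hat\Gamma$, and the maximum principle on $\Omega\cap B_R$ as $R\to\infty$ then gives $u\equiv0$.

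The main obstacle is this uniqueness and compactness step in the exterior case with $q$ near the critical exponents ($q\ge d$ or $q'\le d/(d-1)$). There, $\hat W^1_{q,0}$ functions may behave like constants or like $|x|^{2-d}$ at infinity, and one must check that the boundary condition on the compact $\hat\Gamma$ kills these modes. I would handle it with the classical argument for exterior $C^1$ domains (Simader–Sohr type). The point to verify is that nothing in that argument uses the embeddedness of $\hat\Gamma$, only its local graph structure in $\Omega_\delta$.
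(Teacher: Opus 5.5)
Your localization through the charts $\iota_{j,P_i}$ of $\Omega_\delta$, with the two sheets at a self-intersection point kept in different charts, is the same idea the paper relies on. The paper does not carry out the argument itself: it cites a localization proof for $C^1$ domains with embedded compact boundary and observes that embeddedness is never used.

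The genuine gap is your uniqueness step in the exterior case. Being harmonic in $\Omega$, having zero trace on $\hat\Gamma$ and having $\nabla u\in L^q$ does not force $u=0$. Take $\Omega=\{|x|>1\}$ with $d\ge3$. Then $u=1-|x|^{2-d}$ has all three properties whenever $q>d/(d-1)$, and it tends to $1$ at infinity. The maximum principle on $\Omega\cap B_R$ only gives $|u|\le\max_{|x|=R}|u|\to1$. So neither the definition of the space nor the boundary condition on $\hat\Gamma$ kills the constant at infinity. The point you propose to verify at the end is in fact false. For $d=2$ and $q>2$, $u=\log|x|$ shows that $u$ need not even tend to a constant.

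The missing idea is that the test class itself removes these modes. $\hat W^1_{q',0}(\Omega)$ only requires $\nabla\varphi\in L^{q'}$ and $\varphi|_{\hat\Gamma}=0$. Hence, for every $q'$, it contains a smooth $\varphi$ with $\varphi=0$ near $\hat\Gamma$ and $\varphi=1$ outside a ball $B_R$. Testing the homogeneous equation with this $\varphi$ and integrating by parts gives zero flux: $\int_{|x|=R'}\partial_r u\,dS=0$ for $R'>R$.
\begin{itemize}
\item For $d\ge3$, $u$ tends to a constant $c$. The maximum principle then gives $u=c(1-w)$, where $w$ is the capacitary potential of $\hat\Gamma$ (harmonic, $w=1$ on $\hat\Gamma$, $w\to0$ at infinity). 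Since $w$ has nonzero flux, $c=0$.
\item For $d=2$, $u=a\log|x|+c+O(|x|^{-1})$. Zero flux gives $a=0$, and a bounded harmonic function vanishing on $\hat\Gamma$ in a planar exterior domain is zero.
\end{itemize}
You need this argument for both $q$ and $q'$ for your duality and compactness scheme to go through. The remaining steps are fine as planned: flattening in each chart with small Lipschitz constant, Rellich on $\Omega\cap B_R$, and the maximum principle in the bounded case.
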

If $\Omega$ is a $C^1$ domain with embedded compact boundary, this is proved for example in \cite[Theorem~7.4.3]{PS} by localization.
 The embeddedness of the boundary is not invoked.
 In \cite[Theorem~7.4.3]{PS}, non-homogeneous data is also considered and Dirichlet condition is given on some part of the boundary which may not be on the whole boundary.
 There is another proof of Lemma~\ref{LWD} with more regularity of the boundary based on the Stokes equations which is given by \cite[Theorem~3.2]{ShCPAA}.
 This type of problem is related to the classical Helmholtz decomposition if there is no Dirichlet condition which goes back to \cite[P.~45, Theorem~1.2]{SS}.

In \cite[Theorem~6.1]{Sh}, it is assumed that
\[
    \kappa\in W^{1-1/q,q}(\hat{\Gamma})
    \quad\text{or}\quad
    \nabla(L\kappa)\in L^q(\Omega),
\]
where $\kappa$ is the mean curvature of $\hat{\Gamma}$.
 Since $\Gamma$ is compact and $C^3$, $\kappa\in C^1(\hat{\Gamma})$ so $\kappa\in W^{1-1/q,q}(\hat{\Gamma})$ is fulfilled.

In \cite[Theorem~6.1]{Sh} the compatibility condition is written for a function $u_{0_{h_0}}^\#$ but as we expected it is the same.
 Thus all assumptions in \cite[Theorem~6.1]{Sh} are fulfilled (except embeddedness of $\Gamma$).

\vspace{0.7em}
\noindent{\bf Estimates in original coordinates.}
 We notice that estimates for $u_h^\#$, $\varpi_h^\#$ yield this estimate for original variables, for example,
\begin{equation} \label{EOri}
    \left(\int_0^T \|u\|_{W^{2,q}(\Omega^{h(t)})}^p\, dt
    + \int_0^T \|\partial_t u\|_{L^q(\Omega^{h(t)})}^p\, dt \right)^{1/p}
    \le C'B
\end{equation}
with $C'=C'(p,q,d,r,\delta_0,K)$.
 We notice that
\[
    u(x,t)=u_h^\# \left(Z_{h(t)}(x),t \right).
\]
In general, for a function $f^\#$ defined on $\Omega\times(0,T)$, we set
\[
    f(x,t)=f^\# \left(Z_{h(t)}(x),t \right).
\]
Then
\begin{align*}
    \partial_{x_i}f(x,t) &= \sum_{\ell=1}^d \partial_{y_\ell}f^\#\left(Z_h(x),t\right) \partial_{x_i}Z_h^\ell(x), \\
    \partial_{x_i}\partial_{x_j}f(x,t) &= \sum_{1\le\ell,k\le d} (\partial_{y_\ell}\partial_{y_k}f^\#)\left(Z_h(x),t\right)
    \partial_{x_i}Z_h^\ell \partial_{x_j}Z_h(x)
    + \sum_{\ell=1}^d \partial_{y_\ell}f\left(Z_h(x),t\right)
    \partial_{x_i} \partial_{x_j} Z_h^\ell(x), \\
    \partial_t f(x,t) &= \sum_{\ell=1}^d \partial_{y_\ell}f^\#\left(Z_h(x),t\right) \partial_t Z_h^\ell
    +(\partial_t f^\#) \left(Z_h(x),t\right).
\end{align*}
If $h\in X_T$, then
\[
    Lh\in L^p\left(0,T;W^{3,q}(\Omega)\right)
    \cap W^{1,p}\left(0,T;W^{2,q}(\Omega)\right).
\]
Since $p>2$, by the Sobolev embedding, $Lh\in C\left([0,T],W^{2,q}(\Omega)\right)$.
 If $q>d$, by the Sobolev embedding, $Lh\in L^\infty\left(0,T;W^{1,\infty}(\Omega)\right)$.
 If $\|\nabla h\|_\infty$ is small, say $\|\nabla h\|_\infty\le\varepsilon_0$, then we are able to apply Theorem~\ref{TInv} to conclude
\begin{equation} \label{EFi}
    \sup_{0<t<T} \|\nabla_x\Phi_{h(t)}\|_{L^\infty} < 1
\end{equation}
since $\mathbf{n}$ is $C^1$ on $\hat{\Gamma}$ and $\hat{\Gamma}$ is compact.
 Since $\Omega$ is $C^3$ so that $\mathbf{n}$ is $C^2$, regularity of $\nabla^2 Z_h$ is the same as $\nabla^2 Lh$ by Lemma~\ref{LInv}.
 In particular,
\[
    \|\nabla^2 Z_h\|_{L^q},\ 
    \left\|\partial_t(\nabla^2 Z_h)\right\|_{L^q(\Omega^{h(t)})}
    \in L^p(0,T)
\]
which implies
\begin{equation} \label{ESec}
    \|\nabla^2 Z_h\|_{L^q(\Omega^{h(t)})}
    \in L^\infty(0,T).   
\end{equation}
In Theorem~\ref{TLW}, $u^\#=u_h^\#$ satisfies
\[
    u^\#\in L^p\left(0,T;W^{2,q}(\Omega)\right)
    \cap W^{1,p}\left(0,T;L^q(\Omega)\right)
\]
with $2<p<\infty$, $d<q<\infty$ satisfying $2/p+d/q<1$.
 By a standard interpolation
\[
    u^\#\in L^\infty( 0,T;B_{q,p}^{2(1-1/p)}(\Omega) ),
\]
which implies
\begin{equation}
\begin{aligned}\label{ELQ}
    \nabla u^\# \in L^\infty( 0,T;B_{q,p}^{1-2/p}(\Omega) )
    &\subset L^\infty( 0,T;B_{q,p}^{d/q}(\Omega) ) \\
    &\subset L^\infty( 0,T;L^q(\Omega) )
\end{aligned}
\end{equation}
since $2/p+d/q<1$.
 Thus
\begin{align*}
    \|\nabla_y u\|_{L^q(\Omega^{h(t)})}
    &\le C\|\nabla u^\#(t)\|_{L^q(\Omega)}
    \|\nabla Z_{h(t)}\|_{L^\infty(\Omega^{h(t)})} \\
    \|\nabla_y^2 u\|_{L^q(\Omega^{h(t)})}
    &\le C\|\nabla^2 u^\#(t)\|_{L^q(\Omega)}
    \|\nabla Z_{h(t)}\|_{L^\infty(\Omega^{h(t)})} \\
    &+ C\|\nabla u^\#(t)\|_{L^q(\Omega)}
    \|\nabla^2 Z_{h(t)}\|_{L^\infty(\Omega^{h(t)})}.
\end{align*}
The constant $C$ appears from a bound for $\operatorname{det}(\nabla\Xi_{h(t)})$;
 this bound is independent of $t$ and $x$ since $\left\|\nabla h(t)\right\|_\infty$ is bounded.
 We estimate
\[
    \|\nabla_y^2 u\|_{L^q(\Omega^{h(t)})} 
    \le C\|\nabla^2 u^\#(t)\|_{L^q(\Omega)}M
    +C\sup_{0<\tau<T} \|\nabla u^\#(\tau)\|_{L^q(\Omega)}
    \cdot \|\nabla^2 Z_h\|_{L^q(\Omega^{h(t)})}
\]
so that
\begin{align*}
    \left( \int_0^T \|\nabla_y^2 u\|_{L^q(\Omega^{h(t)})}^p\,dt \right)^{1/p}
    &\le CM\left( \int_0^T \|\nabla^2 u^\#(t)\|_{L^q(\Omega)}^p\,dt \right)^{1/p} \\
    &+CN\left( \int_0^T \|\nabla^2 Z_h\|_{L^q(\Omega^{h(t)})}^p\,dt \right)^{1/p}
\end{align*}
where $M=\sup_{0<t<T} \|\nabla Z_{h(t)}\|_{L^\infty(\Omega^{h(t)})}$, $N=\sup_{0<\tau<T} \|\nabla u^\#(t)\|_{L^\infty(\Omega)}$.
 By \eqref{EFi}, \eqref{ESec} and \eqref{ELQ}.
 This yields that $\|\nabla^2 u\|_{L^q(\Omega^{h(t)})}\in L^p(0,T)$ with necessary estimates.
 Similar observation implies that $\|\partial_t u\|_{L^q(\Omega^{h(t)})}\in L^p(0,T)$ with necessary estimates; see Lemma~\ref{LTDH}. 

We now obtain \eqref{EOri}.

%%%%%%%%%%%%%%%%%%%%%%%%%%%%%%%%%%%%%%%%%%%%%%%%%
\subsection{Construction of a family of domains} \label{SSC} % Section 4.2

For a splash domain $\Omega$ in $\mathbb{R}^d$, we construct a family of domain $\Omega_{0,\eta}$ with $\bar{\Omega}_{0,\eta}\subset \Omega$ such that at some self-intersection point of $\hat{\Gamma}$, $\partial\Omega_{0,\eta}$ is close to this point in both directions keeping necessary bound.
 For brevity, we set $B_r=B_r^{d-1}(0)$ in this subsection.
\begin{prop} \label{PCh}
    Let $f\in C^k(\overline{B_r})$ satisfying \eqref{EAF1} and \eqref{EAF2}, with $k\ge1$.
    Let $g\in C^k(\overline{B_r})$ satisfy $f+\eta_0<g<r$ on $\overline{B_r}$ for some $\eta_0>0$.
    Then for any $\eta\in(0,\eta_0)$ there exists a function $f_\eta\in C^k(\overline{B_r})$ such that 
\[
	f_\eta(x) = \left\{
\begin{array}{ll}
     f(x)+\eta &\text{in}\quad \overline{B_{r/2}} \\
     g(x) &\text{in}\quad \overline{B_r}\setminus B_{3r/4}
\end{array}
\right.,\quad f_\eta<g \ \text{on}\ \overline{B_r}
\]
and
\[
    \sup_{\eta\in(0,\eta_0)}K_{f_\eta} \le c_r(K_f+K_g)
\]
with $c_r$ depending only on $r$.
\end{prop}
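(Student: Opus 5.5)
The plan is to build $f_\eta$ as a convex combination of $f+\eta$ and $g$ with a fixed radial cut-off. Fix once and for all $\psi\in C^\infty(\overline{B_r})$, depending only on $r$ (for instance $\psi(x)=\theta(|x|/r)$ with $\theta\in C^\infty(\mathbb{R})$), such that $0\le\psi\le1$, $\psi\equiv1$ on $\overline{B_{r/2}}$ and $\psi\equiv0$ on $\overline{B_r}\setminus B_{3r/4}$. Then set
\[
    f_\eta := \psi\,(f+\eta) + (1-\psi)\,g .
\]
Clearly $f_\eta\in C^k(\overline{B_r})$. It equals $f+\eta$ on $\overline{B_{r/2}}$ and $g$ on $\overline{B_r}\setminus B_{3r/4}$, so the two prescribed values hold.

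For the strict inequality, note that $\eta<\eta_0$ and $f+\eta_0<g$ give $f+\eta<g$ pointwise. Hence
\[
    g-f_\eta=\psi\,(g-f-\eta)\ge 0 ,
\]
with equality exactly where $\psi=0$. This is the one genuine subtlety. On the region where $\psi=0$, that is, near $|x|=r$, we would have $f_\eta=g$, so the literal strict inequality $f_\eta<g$ would fail there. Since $f_\eta=g$ on $\overline{B_r}\setminus B_{3r/4}$ is required, I read $f_\eta<g$ as holding on the region where the two functions are not forced to agree, namely on $B_{3r/4}$, and $f_\eta\le g$ everywhere. To get strictness on all of $B_{3r/4}$ I will choose $\psi>0$ on the open ball $B_{3r/4}$. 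This is possible with a profile such as $\theta(s)=\exp\bigl(-1/(3/4-s)\bigr)$-type glued to $1$ near $s\le1/2$: it vanishes to infinite order at $s=3/4$ and is positive before that. With that choice, $g-f_\eta=\psi\,(g-f-\eta)>0$ on $B_{3r/4}$. Also $f_\eta$ takes values between $f+\eta$ and $g$, so $f_\eta<r$ follows from $g<r$.

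For the derivative bound, apply the Leibniz rule. For $|\alpha|\le k$,
\[
    \partial^\alpha f_\eta=\sum_{\beta\le\alpha}\binom{\alpha}{\beta}\,\partial^{\alpha-\beta}\psi\,\bigl(\partial^\beta(f-g)+\delta_{\beta 0}\,\eta\bigr)+\partial^\alpha g .
\]
This gives $K_{f_\eta}\le C_k\|\psi\|_{C^k}\,(K_f+K_g+\eta)+K_g$. Since $\|\psi\|_{C^k}$ depends only on $r$ (and $k$), and $\eta<\eta_0\le g-f\le 2K_g+\dots$, the term $\eta$ is at most $\sup|g-f|\le K_f+K_g$. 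This yields $\sup_\eta K_{f_\eta}\le c_r(K_f+K_g)$, uniformly in $\eta$ as required.

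The main obstacle is the step above: making the strict inequality $f_\eta<g$ compatible with $f_\eta=g$ on the outer annulus. My resolution is the interpretation of strictness on $B_{3r/4}$ together with a cut-off that is positive there. The remaining steps are routine.
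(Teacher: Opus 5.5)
Your construction $f_\eta=\psi(f+\eta)+(1-\psi)g$ is exactly the paper's $f_\eta=\varphi\{(f+\eta)-g\}+g$. Your Leibniz-rule bound is correct, including the use of $\eta<\eta_0<\sup|g|+\sup|f|\le K_f+K_g$ to absorb $\eta$. You are also right that requiring $f_\eta<g$ on all of $\overline{B_r}$ contradicts $f_\eta=g$ on the outer annulus. The paper passes over this, and your fix is the appropriate one: read the inequality as strict on $B_{3r/4}$ (and $\le$ elsewhere), and choose a cut-off that is positive there.
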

\begin{proof}
    Let $\varphi\in C_c^\infty(B_r)$ be a cut-off function of $B_{r/2}$ such that $\varphi=1$ on $B_{r/2}$ and $\varphi=0$ on $B_r\setminus B_{3r/4}$ with $0\leq\varphi\le1$ on $B_r$.
    We set
\[
    f_\eta=\varphi\left\{(f+\eta)-g\right\}+g
\]
and observe that $f_\eta$ satisfies all desired properties.
\end{proof}

We shall use notation $C_{r,f}$ in Section~\ref{SD}.
 Let $\Omega$ be a splash domain in $\mathbb{R}^d$ with $C^k$ ($k\ge2$) boundary $\hat{\Gamma}$ of type $(r,\delta_0,K)$.
 Let $P\in\Gamma=\partial\Omega$ be a point of self-intersection, i.e., $\pi^{-1}(P)$ is not a singleton so that $\iota_{1,P}$ and $\iota_{2,P}$ are different.
 Let $\delta_1>0$ be a number such that normal coordinate is available in $\delta_1$-tubular neighborhood of $\hat{\Gamma}$ in $\Omega_\delta$.
 (The existence of $\delta_1>0$ is guaranteed since $k\ge2$.)
 We consider a domain
\[
    \tilde{D}_{\delta_2} = \left\{ x\in\Omega \bigm|
    \operatorname{dist}(x,\Gamma)>\delta_2 \right\}
\]
for $\delta_2\in(0,\delta_1)$.
 If $\delta_2<\delta_1/2$ is sufficiently small,
\[
    \tilde{D}_{\delta_2} \cap \iota_{j,P}(C_{r,f_j})
    = \iota_{j,P}(C_{r,g_j}) \quad\text{(}j=1,2\text{)}
\]
for some function $g_j>f_j$ in $\overline{B_r}$, where
\[
    \Omega\cap i_{1,P}(C_r)
    =\iota_{1,P}(C_{r,f_1})
    \cup\iota_{2,P}(C_{r,f_2})
    \cup\bigcup_{j\equiv3}^m D_j,
\]
where $D_j$ is an open set such that $\partial D_j\not\ni P$.
 We modify $g_j$ by Proposition~\ref{PCh} so that we define $f_\eta$.
 For sufficiently small $\eta>0$
\[
    \Omega_{(\eta)}
    =\left(\tilde{D}_{\delta_2}\setminus\iota_{1,P}(C_r)\right) 
    \cup\iota_{1,P}(C_{r,f_{1,\eta}})
    \cup\iota_{2,P}(C_{r,f_{2,\eta}})
\]
for $\eta$ sufficiently small.
 Note that $\iota_{1,P}(C_r)=\iota_{2,P}(C_r)$ since $Q_{j,P}(C_r)$ for $j=1,2$ is the same.
 We set $\eta_0>0$ so that $f_{j,\eta}<g_j$ for $\eta<\eta_0,$ $j=1,2$.
 By using normal coordinate $\Omega_{(\eta)}$ can be written as
\[
    \Omega_{(\eta)}
    =\left\{ x=y+s\mathbf{n}(y)\in\Omega\setminus \tilde{D}_{\delta_2} \bigm|
    -\delta_2\le s\le h_\eta\ (<0),\ 
    y\in\hat{\Gamma} \right\}
    \cup \tilde{D}_{\delta_2}
\]
some function $h_\eta:\hat{\Gamma}\to(-\delta_2,0)$.
\begin{prop} \label{PAD}
If the immersed boundary $\hat{\Gamma}$ is uniformly $C^k$ ($k\ge2$) of type $(r,\delta_0,K)$.
Then for $\eta\in (0,\eta_0)$ the height function $h_\eta: \hat{\Gamma} \to (-\delta_2,0)$ that parameterizes $\partial \Omega_{(\eta)}$ over $\hat{\Gamma}$ has a bound with its $k$th derivatives on $\hat{\Gamma}$.
Moreover, this bound depends only on $(r,\delta_0,K)$.
\end{prop}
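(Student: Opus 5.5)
The plan is to write $h_\eta$ as an explicit composition of maps whose $C^k$ norms are controlled by $(r,\delta_0,K)$, and to treat separately the part of $\hat{\Gamma}$ far from $P$ and the part inside the chart $\iota_{j,P}(C_r)$, $j=1,2$.

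\emph{Far from $P$.} Outside $\iota_{1,P}(C_{3r/4})\cup\iota_{2,P}(C_{3r/4})$ the boundary $\partial\Omega_{(\eta)}$ coincides with $\partial\tilde{D}_{\delta_2}=\{d=\delta_2\}$. Since $d(\cdot,\partial\Omega)$ is the signed distance in $\Omega_\delta$, its level set at distance $\delta_2$ is exactly $\{y-\delta_2\mathbf{n}(y)\}$. Hence $h_\eta\equiv-\delta_2$ there, and all its derivatives vanish. This uses only that the normal coordinate \eqref{ECor} is valid up to $\delta_1>\delta_2$.

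\emph{Inside the chart.} Fix $j$ and work in $C_r$ after applying $\iota_{j,P}^{-1}$. Parametrize $\hat{\Gamma}$ by $z'\mapsto(z',f(z'))$ with $f=f_{j,P}$; this is a $C^k$ parametrization with $K_f\le K$. By Proposition~\ref{PCh}, $\partial\Omega_{(\eta)}$ is the graph of $F:=f_{j,\eta}$, and $\sup_\eta K_F\le c_r(K_f+K_g)$.

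We first need $K_g$ controlled by $(r,\delta_0,K)$. The graph of $g$ is the level set $\{d=\delta_2\}$. I would invoke the Gilbarg--Trudinger lemma \cite[14.16]{GT}: $d$ is $C^k$ in the $\delta_1$-tube of a $C^k$ boundary, with bounds depending only on the $C^k$ character of the boundary. Then $\partial_{x_d}d$ is bounded below near the boundary, because the normal is close to vertical on $C_r$ when $r$ is small (Remark~\ref{RIm}(iii)). The implicit function theorem therefore gives $g\in C^k$ with $K_g$ depending only on $(r,\delta_0,K)$.

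The height $h=h_\eta(z')$ is defined implicitly by
\[
G(z',s):=(z',f(z'))_d+s\,n_d(z')-F\bigl(z'+s\,n'(z')\bigr)=0,
\]
where $n=(-\nabla'f,1)/\sqrt{1+|\nabla'f|^2}$. We have $\partial_sG=n_d-\nabla'F\cdot n'$, which is bounded below since $\nabla'f$ and $\nabla'F$ are small on $C_r$. The implicit function theorem then gives $h_\eta$ together with bounds on its derivatives. This is the step I expect to be the main obstacle: $G$ contains $n$, which is only $C^{k-1}$, so a naive count yields only $k-1$ derivatives.

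To recover the $k$th derivative I would avoid $n$ in the tangential variables. Set $S=\operatorname{graph}F$. The identity $h_\eta\circ\Pi|_S=-d|_S$ holds, and $d|_S$ is $C^k$ by \cite[14.16]{GT}. Write $S\ni x=(x',F(x'))$ and $\Pi(x)=x-d(x)\nabla d(x)$. Then
\[
h_\eta(z')=-d\bigl(\Psi^{-1}(z')\bigr),\qquad \Psi(x'):=\bigl(x'-d\,\nabla'd\bigr)\big|_{x_d=F(x')}.
\]
In $\nabla^k h_\eta$, the only term that could need $\nabla^{k+1}d$ is the one in which all $k$ derivatives fall on $\nabla'd$ inside $\Psi^{-1}$, and it carries the factor $\nabla d$ from the chain rule. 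I would try to show that this top-order contribution cancels by differentiating the identity $|\nabla d|^2=1$, which gives $\nabla^2d\,\nabla d=0$ and likewise for its higher derivatives. The leftover terms then involve only $\nabla^{\le k}d$, $\nabla^{\le k}F$ and $\nabla^{\le k}f$, all bounded in terms of $(r,\delta_0,K)$.

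If this cancellation does not close, the fallback is to change the construction rather than the estimate. Define $\Omega_{(\eta)}$ directly by the height function
\[
h_\eta=-\delta_2+\psi\,(\delta_2-\eta),
\]
where $\psi$ is a fixed cut-off that equals $1$ on $\hat{\Gamma}\cap\iota_{j,P}(C_{r/2})$ and is built in the $C^k$ graph parametrization. Then $h_\eta\in C^k$ with bounds depending only on $(r,\delta_0,K)$ by construction, and $\partial\Omega_{(\eta)}$ is still an embedded $C^{k-1}$ boundary that approaches $P$ from both sheets as $\eta\to0$. Away from the chart, the first step already gives the constant $-\delta_2$; gluing the two regions with the partition of unity $\{\varphi_{ij}\}$ gives the stated bound.
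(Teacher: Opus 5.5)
\paragraph{Main route.} Your step away from $P$ (where $h_\eta\equiv-\delta_2$) is fine, and so is your $C^k$ control of $g$. You also located the real difficulty correctly. However, the cancellation you hope for does not occur, so the main route cannot close.

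In $h_\eta=-d\circ\Psi^{-1}$, the coefficient of $\nabla^{k+1}d$ is not $\nabla d$. It is $d$ times the derivative of $d$ along $\partial\Omega_{(\eta)}$. The components of $\nabla^{k+1}d$ that enter are those along $\partial\Omega_{(\eta)}$, i.e.\ derivatives of the curvature of the level sets of $d$, and $|\nabla d|^2=1$ does not constrain them. That coefficient vanishes only where $\partial\Omega_{(\eta)}$ is a level set of $d$, that is, where $h_\eta$ is locally constant.

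Intrinsically, \eqref{Ah:Bh} gives $\nabla_{\hat{\Gamma}}h_\eta=(I-h_\eta\mathcal{L})\mathcal{A}_{h_\eta}$. Here $\mathcal{A}_{h_\eta}$ is built from $\mathbf{n}$ and the normal of $\partial\Omega_{(\eta)}$, so it is $C^{k-1}$, while $\mathcal{L}$ is only $C^{k-2}$.

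Concretely, take $d=2$ and write $t(z)=-h_\eta(z,f(z))$, $c=(1+f'^2)^{-1/2}$, $s=f'c$, $\kappa=f''c^3$. Differentiating the defining identity $f(z)+t\,c(z)=f(z-t\,s(z))+\eta$ (valid where $F=f+\eta$) gives
\[
t'(z)=-\frac{\bigl[f'(z)-f'(z-ts)\bigr]\bigl(1-t\kappa(z)\bigr)}{c(z)+s(z)\,f'(z-ts)} .
\]
Since $t\in C^{k-1}$ by the implicit function theorem, every factor except $1-t\kappa$ is $C^{k-1}$. For small $\eta$ the bracket is nonzero wherever $f'f''\neq0$. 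Hence near such points $h_\eta\in C^k$ if and only if $\kappa\in C^{k-1}$, i.e.\ if and only if $f\in C^{k+1}$.

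So for $\hat{\Gamma}$ of class $C^k$ but not $C^{k+1}$, the height function of the $\Omega_{(\eta)}$ built via Proposition~\ref{PCh} has only $k-1$ derivatives. The proposition as stated can be proved with $k-1$ derivatives, or with $k$ derivatives under the extra hypothesis $\hat{\Gamma}\in C^{k+1}$, but not as written.

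\paragraph{Fallback.} Your fallback is a sensible repair, but it proves a claim about a different domain. It discards the construction of Section~\ref{SSC} and prescribes the height instead. By the same identity, $\partial\Omega_{(\eta)}$ is then only $C^{k-1}$ in the transition region, so the ``$C^3$ boundary'' in Theorem~\ref{TMain} would have to be weakened. On the other hand, what Theorem~\ref{TLW} and Corollary~\ref{PRC1} actually use is a small $h_{0\eta}$ in $B_{q,p}^{3-1/p-1/q}(\hat{\Gamma})$, and your construction provides exactly that.

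\paragraph{Comparison with the paper.} The paper's own short proof does not resolve this loss either. It argues from closeness of the normal bundles $\mathcal{N}\hat{\Gamma}$ and $\mathcal{N}\partial\Omega_{(\eta)}$ together with \cite[Section~2.3.3]{PS}. That shows $\partial\Omega_{(\eta)}$ is a graph over $\hat{\Gamma}$. But since $\Pi$ and $\mathbf{n}$ are only $C^{k-1}$, it yields no more than $C^{k-1}$ control of $h_\eta$. Full $C^k$ regularity of the height requires a reference surface smoother than $\hat{\Gamma}$.
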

\begin{proof}
Since $h_\eta=-\delta_2$ outside $\iota_{1,P}(C_r)$, the Hausdorff distance $d_H(\mathcal{N} \hat{\Gamma}, \mathcal{N} \partial \Omega_{(\eta)})$ between normal bundles $\mathcal{N} \hat{\Gamma} := \{ (P, \mathbf{n}(P)) \bigm| P \in \hat{\Gamma} \}$ and $\mathcal{N} \partial \Omega_{(\eta)} := \{ ( y, \mathbf{n}_{\partial \Omega_{(\eta)}}(y) ) \bigm| y \in \partial \Omega_{(\eta)} \}$ can be controlled by a constant multiple of $r$ provided $\delta_1<r^2$.
Hence, when $r$ is sufficiently small, the embedded boundary $\partial \Omega_{(\eta)}$ can be parameterized over $\hat{\Gamma}$ by a $C^k$ height function, see e.g.\ \cite[Section~2.3.3]{PS}.
\end{proof}
\begin{cor} \label{PBC}
If $k\ge2$, $k-2$th derivative of the mean curvature $\kappa$ of $\Omega_{(\eta)}$ has a bound depending only on $(r,\delta_0,K)$.
\end{cor}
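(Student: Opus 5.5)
The corollary should follow directly from Proposition~\ref{PAD} once we write the mean curvature of a normal graph over $\hat{\Gamma}$ in terms of the height function. Fix a patch $\iota_{j,P_i}(C_{r/2}\cap C_{r,f_{j,P_i},\delta})$ from $\mathcal{U}_r$ in \eqref{ELS}. In this patch $\hat{\Gamma}$ is the graph $x_d=f_{j,P_i}(x')$, and the boundary $\partial\Omega_{(\eta)}$ is
\[
\Gamma_{h_\eta}=\left\{\,y+h_\eta(y)\mathbf{n}(y)\bigm| y\in\hat{\Gamma}\,\right\}.
\]
We parametrize $\Gamma_{h_\eta}$ by $x'\mapsto F(x'):=Y(x')+h_\eta(Y(x'))\,\mathbf{n}(Y(x'))$, where $Y(x')=(x',f_{j,P_i}(x'))$. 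We then write down the standard formula
\[
\kappa=-\operatorname{div}_{\Gamma_{h_\eta}}\mathbf{n}_{h_\eta}=g^{ab}\,\partial_a\partial_bF\cdot\mathbf{n}_{h_\eta}\quad(\text{up to sign}),
\]
with $g_{ab}=\partial_aF\cdot\partial_bF$ and $\mathbf{n}_{h_\eta}$ the normalized cross product of the tangent vectors.

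The key structural observation is that $\kappa$ is a smooth (rational) function of the following quantities:
\begin{itemize}
\item $\nabla'f$, $\nabla'^2f$;
\item $\mathbf{n}\circ Y$ and $\nabla'(\mathbf{n}\circ Y)$, which involve up to second derivatives of $f$;
\item $h_\eta$, $\nabla' h_\eta$, $\nabla'^2h_\eta$, together with terms such as $h_\eta\,\nabla'^2(\mathbf{n}\circ Y)$, which involve third derivatives of $f$.
\end{itemize}
The only nonpolynomial operations are the inverse metric $g^{-1}$ and the normalization $|\partial_1F\times\cdots\times\partial_{d-1}F|^{-1}$. So $\kappa$ is a $C^\infty$ function of derivatives of $f$ up to order three and of $h_\eta$ up to order two.

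Differentiating $k-2$ times, the chain and Leibniz rules give a polynomial in:
\begin{itemize}
\item derivatives of $h_\eta$ up to order $k$;
\item derivatives of $f$ up to order $k+1$, at worst;
\item derivatives of the smooth outer function evaluated on a compact set.
\end{itemize}
The derivatives of $h_\eta$ are bounded uniformly in $\eta\in(0,\eta_0)$ by Proposition~\ref{PAD}, with a bound depending only on $(r,\delta_0,K)$.

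To stay within $C^k$ regularity of $f$, one can avoid the extra derivative by not using the normal parametrization. Instead, work directly with the graph representation $x_d=f_{j,\eta}(x')$ of $\partial\Omega_{(\eta)}$ furnished by Proposition~\ref{PCh}, in the patch near $P$. There
\[
\kappa=-\operatorname{div}'\!\left(\frac{\nabla'f_{j,\eta}}{\sqrt{1+|\nabla'f_{j,\eta}|^2}}\right),
\]
which involves only second derivatives, and $K_{f_{j,\eta}}\le c_r(K_f+K_g)$ is bounded uniformly in $\eta$. Away from $P$, $\partial\Omega_{(\eta)}$ is the parallel surface $\{d=\delta_2\}$ of $\hat{\Gamma}$, which is fixed and independent of $\eta$. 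Its regularity is controlled by $(r,\delta_0,K)$ and $\delta_2<\delta_1$, via the normal coordinates.

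The main obstacle I expect is the step that keeps the denominators from degenerating uniformly. This requires the uniform lower bounds
\[
\det g\ge c>0,\qquad |\partial_1F\times\cdots\times\partial_{d-1}F|\ge c>0 .
\]
In the graph representation this is automatic, since $1+|\nabla'f_{j,\eta}|^2\ge1$. In the normal parametrization it follows from $|h_\eta|\le\delta_2<\delta_1$, using that $1-h_\eta\kappa_i$ (with $\kappa_i$ the principal curvatures of $\hat{\Gamma}$) stays near $1$ since $\delta_1$ is small relative to the curvature bound $K$, and from the uniform bound on $\nabla h_\eta$.

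Finally, since $\hat{\Gamma}$ is compact, the finitely many patches of $\mathcal{U}_r$ combine via the partition of unity $\{\varphi_{ij}\}$ into a global bound. That bound depends only on $(r,\delta_0,K)$ (with $\delta_1,\delta_2$ fixed by these), which proves Corollary~\ref{PBC}.
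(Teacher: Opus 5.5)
Your final argument is the paper's proof. Near $P$ you use the graph $x_d=f_{j,\eta}(x')$ with the uniform bound $K_{f_{j,\eta}}\le c_r(K_f+K_g)$ from Proposition~\ref{PCh}, and elsewhere you use the fixed parallel surface $\{d=\delta_2\}$, where $h_\eta=-\delta_2$. The preliminary normal-parametrization route through Proposition~\ref{PAD} is unnecessary and, as you note, costs a derivative of $f$. One point should be stated precisely: the $C^k$ control of $\{d=\delta_2\}$, and hence of the $g_j$ entering $K_g$, comes from $d\in C^k$ near a $C^k$ boundary (Gilbarg--Trudinger, Lemma~14.16). It does not come from the normal coordinate map, which is only $C^{k-1}$.
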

\begin{proof}
This is direct consequence of Proposition~\ref{PCh} and the construction that $h_\eta=-\delta_2$ outside $\iota_{1,P}(C_r)$.
\end{proof}

\begin{cor} \label{PRC1}
Assume that $\hat{\Gamma}$ is compact.
 In other words, $\Omega$ is either an exterior or a bounded domain.
 If $k=3$, then $h_\eta\in B_{q,p}^{3-1/p-1/q}(\hat{\Gamma})$ and
\[
    \|h_\eta\|_{B_{q,p}^{3-1/p-1/q}} \le C_0
\]
with $C_0$ independent of $\eta\in(0,\eta_0)$.
 Here $p,q\in(1,\infty)$.
\end{cor}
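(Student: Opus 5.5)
The plan is to dominate the Besov norm by a plain $C^3$ norm on the compact manifold $\hat{\Gamma}$, and then bound that $C^3$ norm uniformly in $\eta$ using Proposition~\ref{PAD}.

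\textbf{Step 1 (index check).} Put $s=3-1/p-1/q$. Since $p,q\in(1,\infty)$, we have $0<s<3$. Hence on a compact $(d-1)$-dimensional $C^3$ manifold there is a continuous embedding
\[
C^3(\hat{\Gamma})\hookrightarrow B_{q,p}^{s}(\hat{\Gamma}).
\]

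\textbf{Step 2 (reduction to Euclidean pieces).} Use the definition of the Besov norm on $\hat{\Gamma}$ through the finite partition of unity $\{\varphi_{ij}\}_{(i,j)\in\Lambda}$ and the charts $\iota_{j,P_i}$ from Section~\ref{SSS}. This gives
\[
\|h_\eta\|_{B_{q,p}^{s}(\hat{\Gamma})}=\sum_{(i,j)\in\Lambda}\|(\varphi_{ij}h_\eta)\circ\iota_{j,P_i}\|_{B_{q,p}^{s}(\mathbb{R}^{d-1})}.
\]
Each summand is a $C^3$ function with support in the fixed ball $B_{r/2}\subset\mathbb{R}^{d-1}$.

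\textbf{Step 3 (Euclidean embedding).} For $g\in C^3_c(\mathbb{R}^{d-1})$ supported in a fixed bounded set, the elementary estimate is
\[
\|g\|_{B^s_{q,p}}\le C\|g\|_{W^{3,q}}\le C'\|g\|_{C^3},
\]
with $C'$ depending on $r,p,q,d$. One way to see the first inequality is to use $W^{3,q}\subset B^{3}_{q,\infty}\subset B^{s}_{q,p}$ for $s<3$, or equivalently to interpolate. The second inequality holds because the support is bounded.

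\textbf{Step 4 (chain rule on each chart).} The $C^3$ norm of $(\varphi_{ij}h_\eta)\circ\iota_{j,P_i}$ is controlled by three quantities:
\begin{itemize}
\item the $C^3$ norms of the fixed cut-offs $\varphi_{ij}$;
\item the chart parametrizations $x'\mapsto(x',f_{j,P_i}(x'))$, whose derivatives up to order $3$ are bounded by $K$;
\item the derivatives up to order $3$ of $h_\eta$ on $\hat{\Gamma}$.
\end{itemize}
The number $\#\Lambda=\sum_i m(P_i)$ of charts is finite and fixed, and none of the above depends on $\eta$.

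\textbf{Step 5 (the $\eta$-uniform bound).} This is the main point. Proposition~\ref{PAD} with $k=3$ bounds the derivatives of $h_\eta$ up to third order by a constant depending only on $(r,\delta_0,K)$. Moreover $|h_\eta|\le\delta_2$, and $h_\eta=-\delta_2$ outside $\iota_{1,P}(C_r)$. If one wants to be more explicit here, the same bound can be rederived from the construction. Inside $\iota_{1,P}(C_r)$, $h_\eta$ is obtained from $f_{j,\eta}$ of Proposition~\ref{PCh} by solving $y+h_\eta(y)\mathbf{n}(y)\in\operatorname{graph}(f_{j,\eta})$. The implicit function theorem applies because $\nabla f_{j,\eta}$ is small for small $r$. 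This expresses the first three derivatives of $h_\eta$ through those of $f_{j,\eta}$, which are bounded by $c_r(K_f+K_g)$, and through $C^2$ quantities of $\mathbf{n}$.

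\textbf{Conclusion.} Combining Steps 2--5 gives $\|h_\eta\|_{B^s_{q,p}}\le C_0$, with $C_0$ independent of $\eta\in(0,\eta_0)$.

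\textbf{Expected obstacle.} The delicate step is Step 5, and specifically a regularity count. The normal coordinate is only $C^{k-1}=C^2$, so a naive implicit-function argument would give $h_\eta$ only $C^2$. I would first try to use Proposition~\ref{PAD} exactly as stated ($C^3$ control). If that route fails, the fallback is to aim only for a $W^{3-1/q,q}$-type bound, using the Besov index $s<3$. That can be obtained from $C^2$ control together with a difference-quotient estimate of the second derivatives coming from $f_{j,\eta}\in C^3$.
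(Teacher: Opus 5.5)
Your Steps 1--5 are the paper's proof: on compact $\hat{\Gamma}$ one has $C^3(\hat{\Gamma})\subset B_{q,p}^{3-1/p-1/q}(\hat{\Gamma})$ because $3-1/p-1/q<3$, and Proposition~\ref{PAD} with $k=3$ gives the $\eta$-uniform $C^3$ bound, with your Steps 2--4 just unpacking that embedding chart by chart. The obstacle you anticipate is real, though, and your fallback would not remove it: since $\mathbf{n}$ is only $C^2$, $\nabla^2 h_\eta$ contains third derivatives of the reference graph $f_j$ (for $d=2$, to leading order in $\eta$, $h_\eta\approx-\eta\,(1+|f_j'|^2)^{-1/2}$, whose second derivative contains $f_j'f_j'''$), and these are merely continuous, so no difference-quotient gain above order $2$ is available, whereas $3-1/p-1/q>2$ in the regime $p>2$, $q>d$ of Theorem~\ref{TLW}; a genuine repair needs one more derivative of $\hat{\Gamma}$, or $h_\eta$ prescribed directly on $\hat{\Gamma}$ rather than obtained by translating $f_j$.
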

\begin{proof}
If $\hat{\Gamma}$ is compact, $C^3(\hat{\Gamma})\subset B_{p,q}^{3-\alpha}$ for any $\alpha\in(0,3)$ and $p,q\in(1,\infty)$.
 Then Proposition~\ref{PAD} completes the proof.
\end{proof}

%%%%%%%%%%%%%%%%%%%%%%%%%%%%%%%%%%%%%%%%%%%%%%%%%
\subsection{Construction of initial velocity} \label{SSVe} % Section 4.3

Let $\Omega$ be a $C^k$ ($k\ge2$) splash domain with compact boundary.
 We would like to construct an initial velocity on $\Omega_{(\eta)}$ having necessary uniform bound.
\begin{prop} \label{PIV}
For a given $b\in W^{2-1/q,q}(\partial\Omega_{(\eta)})$ satisfying $\int_{\partial\Omega_{(\eta)}}b\,d\mathcal{H}^{n-1}=0$, there exists a family of solenoidal velocity field $u_{0\eta}\in W^{2,q}(\Omega_{(\eta)})$ satisfying the compatibility condition on $\partial\Omega_{(\eta)}$ such that
\begin{gather}
u_{0\eta}\cdot\mathbf{n}_{\partial\Omega_{(\eta)}}=b, \notag \\
\|u_{0\eta}\|_{W^{2,q}}\le C_1\|b\|_{W^{2-1/q,q}(\partial\Omega_{(\eta)})} \label{EEIV}
\end{gather}
with $C_1$ depending only on $q\in(1,\infty)$ and $C^2$-regularity of $\partial\Omega$, i.e., $(r,\delta_0,K)$ if $\partial\Omega$ is considered as type $(r,\delta_0,K)$.
\end{prop}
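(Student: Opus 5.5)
The plan is to construct $u_{0\eta}$ in two stages: first produce a $W^{2,q}$ solenoidal field with normal trace $b$, then correct it so that the tangential stress condition $(\mathbb{D}(u_{0\eta})\mathbf{n})_{\tan}=0$ also holds. As in \cite{CS}, both stages rest on the Solonnikov--Shchadilov solvability theory \cite{SS71} for the stationary Stokes system with mixed boundary conditions:
\[
-\Delta u+\nabla\varpi=0,\quad \operatorname{div}u=0 \ \text{in}\ \Omega_{(\eta)},\qquad u\cdot\mathbf{n}=b,\quad (\mathbb{D}(u)\mathbf{n})_{\tan}=0\ \text{on}\ \partial\Omega_{(\eta)} .
\]
This problem satisfies the Lopatinskii--Shapiro condition. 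Its kernel consists of rigid motions tangent to the boundary, so it is Fredholm, and it is solvable when $\int b=0$; any kernel element we fix by projecting it out. The estimate $\|u\|_{W^{2,q}}\le C\|b\|_{W^{2-1/q,q}}$ is obtained by localization. Since $\partial\Omega_{(\eta)}$ is embedded and $C^k$ with $k\ge2$, the solution is directly a candidate for $u_{0\eta}$.

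\textbf{Step 1: bounded case.} If $\Omega$ is bounded, so is $\Omega_{(\eta)}$, and we simply take $u_{0\eta}$ to be the solution above. The compatibility condition and the normal trace hold by construction. For \eqref{EEIV}, the only issue is that the constant $C_1$ must not depend on $\eta$. The localized a priori estimate uses only the $C^2$ type of the boundary and a lower-order term $\|u\|_{L^q}$. By Propositions~\ref{PCh} and \ref{PAD}, $\partial\Omega_{(\eta)}$ has a $C^2$ type $(r',\delta_0',K')$ controlled by $(r,\delta_0,K)$, uniformly in $\eta$.

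\textbf{Step 2: exterior case.} If $\Omega$ is exterior, we reduce to a bounded problem. Fix a large ball $B_R$ containing $\Gamma$ and the region where $\Omega_{(\eta)}$ differs from $\Omega$, and set $G_\eta=\Omega_{(\eta)}\cap B_{R}$. On $G_\eta$ we solve the Stokes problem above with boundary data $b$ on $\partial\Omega_{(\eta)}$ and zero normal data plus the tangential stress condition on $\partial B_R$; this data has zero mean, so the problem is solvable. Next we take a cutoff $\psi$ equal to $1$ near $\partial\Omega_{(\eta)}$ and supported in $B_R$, and set $v=\psi u$. Then $v$ keeps the boundary conditions but $\operatorname{div} v=\nabla\psi\cdot u$ is supported in the annulus $A=B_R\setminus B_{R/2}$, which is independent of $\eta$, and has zero mean there because $u\cdot\mathbf{n}=0$ on $\partial B_R$. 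By the Bogovskii lemma \cite{Gal} on $A$ we obtain $w\in W^{2,q}_0(A)$ with $\operatorname{div}w=\nabla\psi\cdot u$ and $\|w\|_{W^{2,q}}\le C\|u\|_{W^{1,q}(A)}$. We then define $u_{0\eta}=v-w$, extended by zero. Since $w$ vanishes near $\partial\Omega_{(\eta)}$, both boundary conditions survive, and $u_{0\eta}$ is compactly supported, so it decays at infinity.

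\textbf{Main obstacle.} The hard part is uniformity of $C_1$ in $\eta$, specifically the lower-order term. To remove it we would argue by contradiction and compactness. Suppose $\eta_n\to0$ and there are solutions $u_n$ normalized by $\|u_n\|_{W^{2,q}}=1$ with data $b_n\to0$. The domains $\Omega_{(\eta_n)}$ converge in $C^2$ to a splash-type limit domain, still embedded and contained in $\Omega$, so a subsequence of $u_n$ converges to a nonzero solution of the homogeneous problem that is orthogonal to the kernel. This is a contradiction. Uniqueness up to rigid motions in the limit domain is the delicate point. To avoid the limit domain being genuinely non-embedded, we would restrict to $\eta\in(0,\eta_0)$ bounded below in terms of the domain, or alternatively use the observation that near $P$ the two sheets are separated by $2\eta$ while the localization only uses cylinders of size independent of $\eta$, and perform the localization on the wing manifold $\Omega_\delta$ rather than in $\mathbb{R}^d$.
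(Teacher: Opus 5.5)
Your Steps~1 and~2 are the paper's proof. In the bounded case it takes the same free-slip Stokes solution. In the exterior case it truncates to $\Omega_{(\eta)}\cap B_{R+5}(0)$, multiplies by a radial cutoff, and subtracts a Bogovskii correction supported in the fixed annulus $B_{R+4}(0)\setminus\overline{B_{R+1}(0)}$. Your explicit conditions on the outer sphere, the flux argument for $\int_A\nabla\psi\cdot u\,dx=0$, and the projection away from rigid motions tangent to the boundary fill in details the paper skips; the paper speaks of a ``unique'' solution. The paper, however, only asserts that $C_1$ is independent of $\eta$ because $\Omega_{(\eta)}$ is uniformly $C^2$. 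Your ``main obstacle'' paragraph tries to justify this, and as written it does not work.

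\emph{The limit is not embedded.} In $\iota_{1,P}(C_{r/2})$ the boundary of the limit of $\Omega_{(\eta_n)}$ coincides with $\partial\Omega$, which self-intersects at $P$; this is assertion (i) of Theorem~\ref{TMain}.

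\emph{You cannot keep $\eta$ bounded below.} Proposition~\ref{PCol} and Theorem~\ref{TCol} need $\eta<\eta_1=t_1/16$, with $t_1$ fixed by the $\eta$-independent bound $B$. So uniformity is needed exactly as $\eta\downarrow0$.

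\emph{The uniform type in Step~1 is only component-wise.} It holds in the sense of Definition~\ref{DIm}, not with one graph per ball as in the standard Stokes estimates. In that standard sense the admissible radius near $P$ is $\lesssim\eta$, because the opposite sheet is at distance about $2\eta$. Hence localizing on $\Omega_\delta$, with cutoffs living on a single sheet, is not an optional alternative: it is what makes Step~1 true. The compactness argument must then be run on the splash limit $\Omega_{(0)}$ viewed in $\Omega_\delta$, transporting $\Omega_{(\eta_n)}$ to it (e.g.\ by a Hanzawa map). There, integration by parts still shows that homogeneous solutions are rigid motions tangent to the boundary.

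\emph{The limit need not be orthogonal to the whole limit kernel.} Let $K_\eta$ be the kernel on $\Omega_{(\eta)}$ and $K_0$ the kernel on $\Omega_{(0)}$. Your limit is orthogonal only to $\lim K_{\eta_n}$, not automatically to $K_0$, and this matters. Suppose some $w\in K_0$ stays at distance $\sim1$ from $K_\eta$. Since $\partial\Omega_{(\eta)}$ is an $O(\eta)$ $C^3$-perturbation of $\partial\Omega_{(0)}$, the data $b=w\cdot\mathbf{n}_{\partial\Omega_{(\eta)}}$ is $O(\eta)$ in $W^{2-1/q,q}$. Yet $w|_{\Omega_{(\eta)}}$ solves the problem with this data and has norm $\sim1$, so $C_1\gtrsim\eta^{-1}$. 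You therefore have to verify $K_0=\lim K_{\eta_n}$ for this family. The paper's bare assertion of uniformity is equally silent on this point.
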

\begin{proof}
Let us first consider the case that $\Omega$ is bounded so that $\Omega_{(\eta)}$ is also bounded.
 We consider the Stokes problem
\begin{equation}\label{ESSL}
	\left\{
\begin{array}{rl}
    -\Delta u+\nabla\varpi=0,
    &\operatorname{div}u=0\ \text{in}\ \Omega_{(\eta)}, \\
    \left(\mathbb{D}(u)\mathbf{n}\right)_{\tan}=0,
    & u\cdot\mathbf{n}=b\ \text{on}\ \partial\Omega_{(\eta)}
\end{array}
\right.
\end{equation}
with $b$ satisfying $\int_{\partial\Omega_{(\eta)}}b\,d\mathcal{H}^{d-1}=0$.
 By \cite{SS71}, there is a unique solution $(u,\nabla\varpi)$ such that for $q\in(1,\infty)$
\[
    \|u\|_{W^{2,q}(\Omega_{(\eta)})}
    \le C_1\|b\|_{W^{2-1/q,q}(\partial\Omega_{(\eta)})}
\]
and $C_1$ depends only on $q$ and $C^2$-regularity of $\Omega_{(\eta)}$.
 Since $\Omega$ is at least $C^2$ domain so that $\Omega_{(\eta)}$ is uniformly $C^2$, dependence of $C_1$ is what we required.

In the case of an unbounded domain, we consider $\Omega_{(\eta)R}=\Omega_{(\eta)}\cap B_{R+5}(0)$ such that $\Omega_{(\eta)}^c\subset B_R(0)$ for $\eta\in(0,\eta_0)$.
 Let $u_R$ be the solution of \eqref{ESSL} with $\Omega_{(\eta)}$ replaced by $\Omega_{(\eta)R}$.
 We multiply cut-off function $\varphi\in C_c^\infty(0,R+5)$ such that $\varphi=1$ on $[0,R+2]$ and $\varphi=0$ on $[R+3,R+5)$ with $0\le\varphi\le1$.
 We consider
\[
    u_{R\varphi}(x) = \varphi\left(|x|\right) u_R(x).
\]
This function satisfies
\[
    \left( \mathbb{D}(u)\mathbf{n}\right)_{\tan}=0
    \quad\text{and}\quad
    u\cdot\mathbf{n}=b
    \quad\text{on}\quad
    \partial\Omega_{(\eta)}    
\]
with \eqref{EEIV}.
 Since
\[
    \operatorname{div}u_{R\varphi}(x)
    =\nabla\left(\varphi\left(|x|\right)\right)\cdot u_R(x)=:\psi
    \quad\text{in}\quad
    \Omega_{0,\eta}
\]
and $\nabla\varphi\left(|x|\right)$ is supported in $A=B_{R+4}(0)\setminus\overline{B_{R+1}(0)}$($=\Omega_{(\eta)}\cap A$) by $\Omega_{(\eta)}^c\subset B_R(0)$, we apply the stantard Bogovski's lemma to conclude that there exists a $v\in W^{2,q}(A)$ with $\operatorname{spt}v\subset A$ satisfying
\[
    \operatorname{div}v=\psi
    \quad\text{in}\quad A.
\]
Moreover, $\psi\mapsto v$ is a continuous linear operator
\[
    \|v\|_{W^{2,q}(A)}
    \le C\|\psi\|_{W^{1,q}(A)}
\]
with $C$ depending only on $q$ and $R$; see P.~Galdi's textbook \cite{Gal}.
 (We need compatibility condition $\int_A\psi\,dx=0$ which is automatically fulfilled since $\psi=\operatorname{div}u_{R\varphi}$.)
 We set
\[
    u_0:=u_{R\varphi}-v
\]
to get a desired vector field.
\end{proof}

%%%%%%%%%%%%%%%%%%%%%%%%%%%%%%%%%%%%%%%%%%%%%%%%%
\subsection{Collapsing bubble} \label{SSCB} % Section 4.4

Let $\Omega$ be a $C^k$ ($k\ge2$) splash domain in $\mathbb{R}^d$ with compact boundary $\hat{\Gamma}$ in $\Omega_\delta$.
 For a given $\eta\in[0,\eta_0)$ we construct $\Omega_{(\eta)}$ as in Section~\ref{SSC} and write
\[
    \Omega_{(\eta)}=\Omega^{h_{0\eta}}
\]
for some function $h_{0\eta}:\hat{\Gamma}\to(-\delta_2,0)$.
 For a function $h_\eta:\hat{\Gamma}\times[0,T)\to(-\delta_2,\delta_2)$ with $h_\eta|_{t=0}=h_{0\eta}$, we consider an evolving family of domain
\[
    \Omega_\eta(t) := \Omega^{h_\eta(t)}.
\]
By definition, its initial configuration $\Omega_\eta(0)=\Omega_0^{h_{0\eta}}$, which we also write $\Omega_{0,\eta}$.
 Let $\|\cdot\|_{C^\mu}$ denote the H\"older norm on $\hat{\Gamma}\times[0,T)$ with exponent $\mu\in(0,1)$.
\begin{prop} \label{PCol}
Let $P\in\Gamma$ be a point of self-intersection.
 Assume that $h_{0\eta}$ is taken as in Section~\ref{SSC}.
 Assume that $h_\eta,\nabla h_\eta\in C\left(\hat{\Gamma}\times[0,T)\right)$ with
\[
    \|h_\eta\|_\infty\le\delta_2.
\]
Assume that the normal velocity $V$ of $\Omega_\eta(t)$ satisfies
\[
    \|V\|_{C^\mu}\le A_1
\]
with some constant $A_1$ as a function on $\hat{\Gamma}\times[0,T)$.
 Then, there exists $\eta_1\in(0,\eta_0)$ and $\delta_3>0$ such that for $\eta\in(0,\eta_1)$, $\partial\Omega_\eta(t)$ has a self-intersection point (near $P$) for some $t_0\in(0,T)$ provided that at $t=0$,
\begin{gather*}
    V=1 \quad\text{on}\quad \iota_{1,P}(\partial C_{r,f_{1,\eta}}\cap C_{r/2}), \\
    V=0 \quad\text{on}\quad \iota_{2,P}(\partial C_{r,f_{2,\eta}}\cap C_{r/2})
\end{gather*}
and
\[
        \|\nabla_{\hat{\Gamma}}h_\eta\|_\infty\le\delta_3.
\]
The number $\eta_1$ can be taken so that it only depends on $\delta_1$, $\delta_2$, $A_1$, $\mu$ and $C^2$ regularity of $\hat{\Gamma}$ (i.e., $(r,\delta_0,K)$).
\end{prop}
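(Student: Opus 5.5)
We argue in the local chart $\iota_{1,P}(C_r)=\iota_{2,P}(C_r)$ around the self-intersection point $P$. There both sheets of $\hat{\Gamma}$ are graphs over the same base $B_r$, in opposite normal directions (Remark~\ref{RIm}~(i)). The first sheet is $x_d=f_1(x')$, with $\Omega$ lying above it. The second sheet is $x_d=\tilde f_2(x')$, with $\Omega$ lying below it, and $\tilde f_2\ge f_1$ with equality at $x'=0$.

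By the construction in Section~\ref{SSC} via Proposition~\ref{PCh}, the initial boundary $\partial\Omega_{0,\eta}$ inside $C_{r/2}$ consists of the two graphs $x_d=f_1+\eta$ and $x_d=\tilde f_2-\eta$. The gap between these two sheets above $x'=0$ is therefore exactly $2\eta$. The plan is to follow the evolution of the heights of the two sheets over the base point $x'=0$ and to show that the lower sheet catches the upper one before time $T$, while the upper sheet stays essentially at rest.

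\textbf{Step 1 (heights along normals).} Write $\partial\Omega_\eta(t)$ as $y+h_\eta(y,t)\mathbf{n}(y)$ with $y\in\hat{\Gamma}$, and let $y_1,y_2\in\hat{\Gamma}$ be the two preimages of $P$ under $\pi$. Their normals satisfy $\mathbf{n}(y_2)=-\mathbf{n}(y_1)$. The normal velocity is
\[
V=\partial_t h_\eta\,\bigl(\mathbf{n}(y)\cdot\mathbf{n}_{h_\eta}\bigr).
\]
Since $\|\nabla_{\hat{\Gamma}}h_\eta\|_\infty\le\delta_3$ and $\|h_\eta\|_\infty\le\delta_2$, the factor $\mathbf{n}\cdot\mathbf{n}_{h_\eta}$ lies in $[1-c\delta_3-c\delta_2,\,1]$, with $c$ depending only on the $C^2$ character $(r,\delta_0,K)$. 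Hence $\partial_t h_\eta=V/(\mathbf{n}\cdot\mathbf{n}_{h_\eta})$.

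\textbf{Step 2 (using the Hölder bound).} The bound $\|V\|_{C^\mu}\le A_1$ gives $|V(y,t)-V(y,0)|\le A_1t^\mu$. With the prescribed initial data this yields
\[
\partial_t h_\eta(y_1,t)\ge \frac{1-A_1t^\mu}{1}\quad\text{(up to the factor above)},\qquad |\partial_t h_\eta(y_2,t)|\le \frac{A_1t^\mu}{1-c\delta_3-c\delta_2}.
\]
Choose $t_*\le T/2$ with $A_1t_*^\mu\le 1/4$. Then for $t\le t_*$ the first height increases at rate at least $1/2$, while the second moves by at most $\tfrac12 t$. After integrating, the distance between the two sheets along the common normal line through $P$ is at most
\[
2\eta-\tfrac12 t+\tfrac14 t+\text{(tilt error)}.
\]

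\textbf{Step 3 (tilt error and contact).} The points $y_i+h_\eta(y_i,t)\mathbf{n}(y_i)$ lie on the same line through $P$, so the distance between them equals $2\eta$ minus the net normal displacements. The dependence on $\delta_3$ enters only through the factor controlled in Step 1. Choose $\eta_1$ with $8\eta_1<t_*$, so that for $\eta<\eta_1$ the gap becomes non-positive at some time $t\le 8\eta<t_*<T$.

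By continuity of $h_\eta$, there is a first time $t_0$ at which the two sheets, which are graphs over a neighborhood of $x'=0$, touch. Before $t_0$ the gap is positive; at $t_0$ the two sheets meet while remaining locally on opposite sides, so $\partial\Omega_\eta(t_0)$ has a self-intersection point near $P$. Nearness to $P$ follows because both displacements are $O(\eta)$.

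The dependence of $\eta_1$ is only through $t_*(A_1,\mu,T)$, $\delta_2$ and the $C^2$ type; $\delta_3$ is fixed small in terms of the same quantities. The constraint $\|h_\eta\|_\infty\le\delta_2$ guarantees that the normal coordinates of Section~\ref{SD} remain valid throughout.

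\textbf{Main obstacle.} The delicate point is Step 3: showing that the first contact is a genuine self-intersection of the boundary, meaning the two sheets stay graphs with the fluid on opposite sides, rather than a loss of the parametrization. This is where the smallness of $\nabla h_\eta$ (that is, $\delta_3$) and the fact that the two sheets are graphs over the common base $B_{r/2}$ are essential. Note also that the proof uses only $\|V\|_{C^\mu}\le A_1$ and the time $T$; the same $T$ is supplied uniformly in $\eta$ by Theorem~\ref{TLW}.
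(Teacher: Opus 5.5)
Your proposal is correct and follows essentially the paper's argument. The paper tracks the graph heights $f_{j,\eta}(0,t)$ over the base point via $\partial_t f_{j,\eta}=-V\sqrt{1+|\nabla' f_{j,\eta}|^2}$, whereas you track the normal heights $h_\eta(y_i,t)$ at the two preimages of $P$ via $V=\mathcal{B}_h\partial_t h$. In both, H\"older continuity in time keeps the two sheet speeds near $1$ and $0$ on an interval depending on $A_1$, $\mu$ (and on $T$), so the gap $2\eta$ closes within time $O(\eta)$. Two minor slips need fixing. First, with $\Omega$ above the first sheet and below the second you need $\tilde f_2\le f_1$, not $\tilde f_2\ge f_1$. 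Second, the constants in Step 2 should be made consistent: the first sheet moves at rate at least $3/4$ and the second at most $(1/4)/(1-c\delta_3)$.
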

\begin{proof}
In a neighborhood $\iota_{1,P}(C_r)$ of $P$, $\Omega_{0,\eta}$ can be written as
\[
    C_{r/2,f_1+\eta}\cup \left(C_{r/2}\setminus\bar{C}_{r/2,-f_2-\eta}\right)
\]
by translation and notation.
 We may assume that $\Omega_\eta(t)$ can be written as
\begin{align*}
    &\Omega_\eta(t)\cap\iota_{1,P}(C_r)=\iota_{1,P}(C_{r,f_{1,\eta}(t)}) \\
    &\Omega_\eta(t)\cap\iota_{2,P}(C_r)=\iota_{2,P}(C_{r,f_{2,\eta}(t)})
    = \iota_{1,P}(C_r\setminus \bar{C}_{r,-f_{2,\eta}(t)})
\end{align*}
with
\[
    f_{1,\eta}(x',0)=f_1(x)+\eta, \quad
    f_{2,\eta}(x',0)=f_2(x')+\eta,
    \quad\text{for}\quad |x'|<r/2
\]
if $\delta_3$ is chosen small; see Figure~\ref{FCo}.
\begin{figure}[htb]
\centering
    \includegraphics[width=0.4\linewidth]{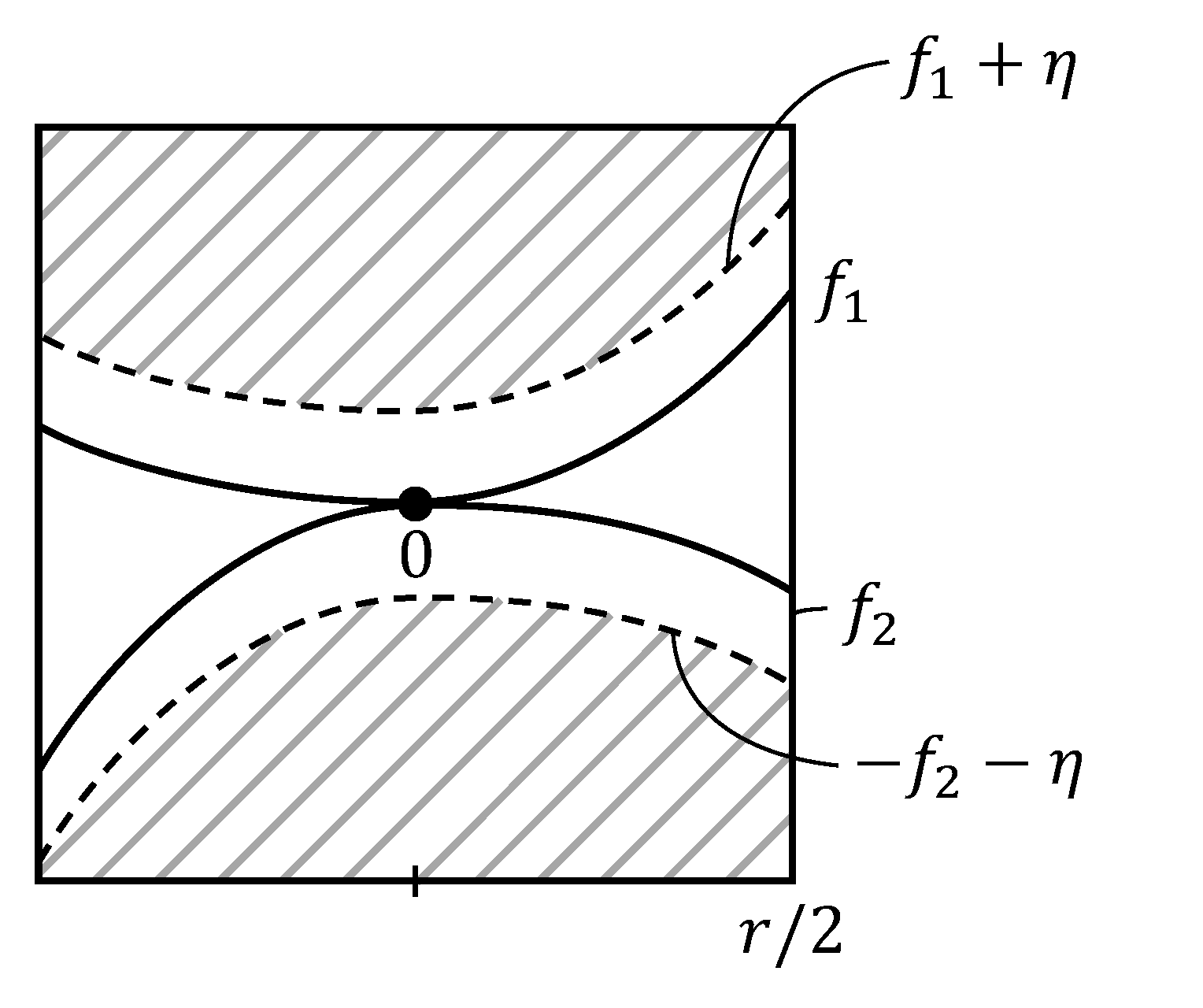}
    \caption{Initial profile}
    \label{FCo}
\end{figure}
This smallness only depends on $C^2$ regularity of $\hat{\Gamma}$.

Since
\[
    \partial_t f_{j,\eta}=-V\sqrt{1+|\nabla'f_{j,\eta}|^2}, \quad
    j=1,2,
\]
we observe that
\[
    \partial_t f_{1,\eta}(x',0)\le-1 \quad
    \partial_t f_{2,\eta}(x',0)=0
    \quad\text{for}\quad |x'|<r/2.
\]
Since $V$ is H\"older continuous,
\begin{align*}
    &V\le-\frac12 \quad\text{for}\quad j=1 \\
    &V\ge\frac{1}{\sqrt{1+\|\nabla'f_{2,\eta}\|_\infty^2}}\cdot\frac14 \quad\text{for}\quad j=2
\end{align*}
for $|x'|<r/2$ and $t\in(0,t_1]$ for sufficiently small $t_1\le T$ depending only on the H\"older regularity $A_1$.
 (Actually, H\"older norm in time is enough.)
 Here $\|\nabla'f_{2,\eta}\|_\infty$ is a $L^\infty$ norm in $B_r\times(0,T)$.
 We take $\delta_3$ smaller so that $\|\nabla'f_{2,\eta}\|_\infty$ is finite.
 This smallness depends only on $C^2$ regularity of $\hat{\Gamma}$.
 We thus conclude that
\begin{gather*}
    \partial_t f_{1,\eta}(x',t) \le -\frac12 \\
    \partial_t f_{2,\eta}(x',t) \ge \frac14
\end{gather*}
for $|x'|<r/2$, $t\in(0,t_1)$.
 This implies that
\begin{align*}
    f_{1,\eta}(0,t)+f_{2,\eta}(0,t)
    &= \int_0^t\left(\partial_t f_{1,\eta}(0,\tau)+\partial_t f_{2,\eta}(0,\tau)\right)\,d\tau+2\eta \\
    &\le \left(-\frac12+\frac14\right)t+2\eta
    = -\frac{t}{4}+2\eta.
\end{align*}
If $\eta_1=t_1/16$, then
\[
    f_{1,\eta}(0,t)+f_{2,\eta}(0,t)
\]
becomes negative for some $t\in(0,t_1)$, $\eta\in(0,\eta_1)$.
 This means that $\partial\Omega_\eta(t)$ should have self-intersection at some time $t>0$ provided that $\eta_1$ is small.
\end{proof}

We are now in position to state a detailed form of Theorem~\ref{TMain}.
\begin{thm} \label{TCol}
Assume that $2/p+d/q<1$, $p>2$, $q>d$.
 Let $\Omega$ be a $C^3$ splash domain $\mathbb{R}^d$ with compact boundary $\hat{\Gamma}$.
 Let $P\in\hat{\Gamma}$ be a point of self-intersection.
 For a given $\eta\in(0,\eta_0)$ let $\Omega_{0,\eta}$ be a domain $\Omega_{(\eta)}=\Omega^{h_{0\eta}}$ constructed in Section~\ref{SSC} with $h_{0\eta}\in B_{p,q}^{3-1/p-1/q}(\hat{\Gamma})$ such that $\|h_{0\eta}\|_{B_{q,p}^{3-1/p-1/q}}\le\varepsilon$, where $\varepsilon$ is given in Theorem~\ref{TLW}.
 Then there exists $\eta_1\in(0,\eta_0)$ and initial velocity field $u_{0\eta}\in B_{q,p}^{2(1-1/p)}$ satisfying the compatibility condition \eqref{EComp} such that $\partial\Omega_\eta(t)$ of the solution $\left(u_\eta,\varpi_\eta,\Omega_\eta(t)\right)$ (constructed in Theorem~\ref{TLW}) to \eqref{ENS}, \eqref{EIn} with $u|_{t=0}=u_{0\eta}$, $\Omega_\eta(t)|_{t=0}=\Omega_{0,\eta}$ has a self-intersection point near $P$ in a finite time provided that $\eta\in(0,\eta_1)$.
Moreover, the solution $u_\eta$ and the mean curvature $\kappa_\eta$ of $\partial\Omega_\eta(t)$ is bounded as $t$ tends to the collapsing time. 
\end{thm}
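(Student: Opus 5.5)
The plan is to combine the uniform local well-posedness of Theorem~\ref{TLW} with the geometric collapse criterion of Proposition~\ref{PCol}. The key point is that all constants must be controlled independently of $\eta$, so that the existence time $T$ does not shrink as $\eta\downarrow0$. Since the initial gap is of size $O(\eta)$, a fixed lifespan then forces a self-intersection.

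\textbf{Step 1: initial velocity.} Fix $\eta\in(0,\eta_0)$. Choose a boundary datum $b\in C^2(\partial\Omega_{(\eta)})$ built from a smooth cut-off in the chart $\iota_{1,P}(C_r)$:
\begin{itemize}
\item $b=1$ on the $j=1$ sheet $\iota_{1,P}(\partial C_{r,f_{1,\eta}}\cap C_{r/2})$;
\item $b=0$ on the $j=2$ sheet near $P$;
\item $b$ is compensated elsewhere (for instance by a negative bump on a portion of $\partial\Omega_{(\eta)}$ away from $\iota_{1,P}(C_r)$, where $h_{0\eta}=-\delta_2$) so that $\int b\,d\mathcal{H}^{d-1}=0$.
\end{itemize}
Because $\partial\Omega_{(\eta)}$ coincides with a fixed surface outside $\iota_{1,P}(C_{3r/4})$ and has uniformly bounded $C^3$ norm by Proposition~\ref{PAD}, $b$ can be chosen with $\|b\|_{W^{2-1/q,q}}\le C$ uniformly in $\eta$. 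Proposition~\ref{PIV} then gives a solenoidal $u_{0\eta}\in W^{2,q}$ satisfying \eqref{EComp}, with $u_{0\eta}\cdot\mathbf{n}=b$ and $\|u_{0\eta}\|_{W^{2,q}}\le C_1C=:B$. Since $W^{2,q}\subset B^{2(1-1/p)}_{q,p}$, we also get $\|u_{0\eta}\|_{B^{2(1-1/p)}_{q,p}}\le B$ uniformly in $\eta$.

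\textbf{Step 2: uniform lifespan.} By hypothesis $\|h_{0\eta}\|_{B^{3-1/p-1/q}_{q,p}}\le\varepsilon$, and $B$ is independent of $\eta$. Theorem~\ref{TLW} therefore yields a solution on $(0,T)$ with $T=T(B,p,q,d,r,\delta_0,K)$ independent of $\eta$. It also gives $\|h_\eta\|_{X_T}+\|u^\#\|_{Y_T}\le CB$, $\|\nabla h_\eta\|_\infty\le\varepsilon_0$ and $\|h_\eta\|_\infty\le\delta_1$. Shrinking $\varepsilon_0$ if necessary gives $\|\nabla_{\hat\Gamma}h_\eta\|_\infty\le\delta_3$.

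\textbf{Step 3: H\"older bound on the normal velocity (main obstacle).} We need $\|V\|_{C^\mu(\hat\Gamma\times[0,T))}\le A_1$ uniformly in $\eta$. The normal velocity is $V=u\cdot\mathbf{n}$, and in the $h$-parametrization $\partial_t h$ equals $V$ times a factor depending only on $\nabla h$. For $u^\#$ we use the parabolic embedding
\[
Y_T\subset C^{\mu}\bigl([0,T];C^{1}(\bar\Omega)\bigr),
\]
valid since $2/p+d/q<1$ via the mixed-derivative embedding $Y_T\subset W^{\theta,p}(0,T;W^{2(1-\theta),q})$ with suitable $\theta$. For $h$ we use
\[
X_T\subset C^{\mu}\bigl([0,T];C^{1+\mu}(\hat\Gamma)\bigr).
\]
Together with Theorem~\ref{TInv}, which gives $|\nabla\Phi_h|<1$, this shows that $V$ as a function on $\hat\Gamma\times[0,T)$ is H\"older with norm $\le A_1(CB)$. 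The embedding constants for fixed $T$ may blow up as $T\to0$; to avoid this, I would extend the functions to a fixed interval $(0,T_0)$ by an $\eta$-independent extension operator. At $t=0$, $V=b$, which is $1$ on sheet~1 and $0$ on sheet~2. Proposition~\ref{PCol} then applies with $\eta_1=\eta_1(\delta_1,\delta_2,A_1,\mu,r,\delta_0,K)$, shrunk further so that the collapse time $t_1\sim16\eta$ is less than $T$; explicitly one needs $16\eta_1<t_1\le T$. This gives a self-intersection near $P$ at some $t_0<T$, hence $T_c\le t_0<T$.

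\textbf{Step 4: boundedness at $T_c$.} Since $T_c<T$, the solution is regular on $[0,T_c]$ inside $\Omega_\delta$. From \eqref{EOri} and $u^\#\in C([0,T];B^{2(1-1/p)}_{q,p})\subset C([0,T];C^1)$, $u_\eta$ remains bounded as $t\uparrow T_c$. Likewise $h\in C([0,T];B^{3-1/p-1/q}_{q,p})\subset C([0,T];C^2)$ bounds the principal curvatures, and hence $\kappa_\eta$, up to $T_c$.
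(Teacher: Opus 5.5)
Your proposal is correct and follows the paper's route. It obtains an $\eta$-independent bound $B$ on $u_{0\eta}$ from Proposition~\ref{PIV} and hence an $\eta$-independent lifespan from Theorem~\ref{TLW}; it then gets a H\"older bound on $V$ from the parabolic embeddings, feeds Proposition~\ref{PCol} with $16\eta_1<t_1\le T$, and bounds $u_\eta$ and $\kappa_\eta$ via the trace-space embeddings. Beyond the paper, you enforce $\int b\,d\mathcal{H}^{d-1}=0$ (which Proposition~\ref{PIV} requires but the paper's choice of $b$ ignores), set $b=0$ on the second sheet as Proposition~\ref{PCol} demands, and control the H\"older regularity of the normal $\mathbf{n}_h$ through $X_T$; these tighten points the paper leaves implicit.
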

\begin{proof}
Since $\hat{\Gamma}$ is compact, it is of type $(r,\delta_0,K)$, we may take $K$ as small as we like by taking $r$ sufficiently small.
 By construction of $f_\eta$ and $h_{0\eta}$, for any $\varepsilon_1>0$, $\|h_{0\eta}\|_{C^3}\le\varepsilon_1$ by choosing $\delta_2$ small by Proposition~\ref{PCh}.
 This implies that $\|h_{0\eta}\|_{B_{q,p}^{3-1/p-1/q}}\le\varepsilon$ by choosing $\varepsilon_1$ smaller.

We shall construct initial velocity.
 We may take $r$ small so that the Jacobi matrix of coodinate change to normal coodinate is close to the identity in $\iota_{j,P}(C_r)$.
 We shall fix $r$.
 Let $b$ be a $C^2$ vector field on $\partial\Omega_{0,\eta}$ such that $b\equiv1$ on $\iota_{1,P}(C_{r/2})$ and $b\equiv0$ on $\partial\Omega_{0,\eta}\setminus\iota_{1,P}(C_r)$.
 By Proposition~\ref{PIV}, there is a vector field $u_{0\eta}\in W^{
 2,q}(\Omega_{0,\eta})$ such that
\[
    \|u_{0\eta}\|_{W^{2,q}}\le B
\]
with $B$ independent of $\eta$.
 (This constant $B$ depends only on $C^2$ regularity of $\hat{\Gamma}$.)
 Moreover, it satisfies the compatibility condition.

We solve the problem by Theorem~\ref{TLW} and found that
\begin{align*}
    &h_\eta\in L^p( 0,T;W^{3-1/q,q}(\hat{\Gamma}) )
    \cap W^{1,p}( 0,T;W^{2-1/q,q}(\hat{\Gamma}) )=:X_T \\
    &u_\eta^\# = u_\eta\circ\Xi\in L^p( 0,T;W^{2,q}(\Omega_0) )
    \cap W^{1,p}( 0,T; L^q(\Omega) )=:Y_T
\end{align*}
for some $T>0$ independent of $\eta$ and $u_{0,\eta}$.
 Moreover, Theorem~\ref{TLW} implies that $\|h_\eta\|_\infty\le\delta_2$, $\|\nabla_\Gamma h_\eta\|\le\delta_3$ by taking $\varepsilon$ smaller.
 By interpolation (cf.\ \cite{PS})
\[
    Y_T \subset H^{s,p}( 0,T;B_{q,p}^{2(1-s)}(\Omega) ),
    \quad 0<s<1,
\]
where $H^{s,p}$ is the space of Bessel potential.
 We take $s>1/p$ to get
\[
    H^{s,p}( 0,T;B_{q,p}^{2(1-s)}(\Omega) )
    \subset C^\nu( 0,T;B_{q,p}^{2(1-s)}(\Omega) )
\]
by Morrey's inequality with $\nu=s-1/p$.
 Since $2/p+d/q<1$,
\begin{align*}
    2(1-s)=2-2\left(\nu+\frac{1}{p}\right)
    &=1-2\nu+1-\frac{2}{p} \\
    &>1-2\nu+\frac{d}{q}.
\end{align*}
We take $\nu$ small so that $2\nu<d/q$.
 Then
\[
    B_{q,p}^{2(1-s)} \subset W^{1,q}.
\]
Again by Morrey's inequality (see e.g.\ \cite{Ev}), we see $W^{1,q}\subset C^{\nu_1}(\bar{\Omega})$ for $\nu_1=1-d/q$.
 Thus $Y_T\subset C^\mu\left(\Omega\times[0,T)\right)$ for $\mu=\min(\nu,\nu_1)$.
 We thus conclude that
\[
    \|V\|_{C^\mu} \le C\|u_\eta\|_{Y_T}.
\]
We are now able to apply Proposition~\ref{PCol} and we obtain a desired result of collapse.
We note that $u_\eta$ and $\kappa_\eta$ are bounded on $(0,T)$.
 Indeed, $h_\eta\in X_T$ and $u\in Y_T$ implies that
\[
    Lh_1\in C( [0,T);B_{q,p}^{3-2/p}(\Omega) ) \quad
    u_\eta\in C( [0,T);B_{q,p}^{2(1-1/p)}(\Omega) ).
\]
The boundedness of $u_\eta$ and $\kappa_\eta$ comes from the Sobolev embedding as discussed in Subsection~\ref{SSDiff}.
 The proof is now complete.
\end{proof}

\begin{proof}[Proof of Theorem~\ref{TMain}]
Theorem~\ref{TMain} is essentially contained in Theorem~\ref{TCol} except we did not mention that the monotone limit of $\Omega_{0,\eta}$ as $\eta\to0$ is a splash domain close to $\Omega$.
 Fortunately, this follows from the construction of $\Omega_{(\eta)}$. 
\end{proof}

%%%%%%%%%%%%%%%%%%%%%%%%%%%%%%%%%%%%%%%%%%%%%%%%%
\section{Sketch of the proof of the well-posedness} \label{SW} % Section 5

As noticed in Section~\ref{SSL}, the local well-posedness in Theorem~\ref{TLW} is essentially a special case of \cite[Theorem~6.1]{Sh} when the boundary of $\Omega$ is embedded.
 The proof of \cite[Theorem~6.1]{Sh} is very involved.
 We just briefly sketch the proof of Theorem~\ref{TLW} to clarify why his theory still works for a splash domain.

Let us first list the steps of the proof of Theorem~\ref{TLW}.
 Let $\Omega$ be a $C^3$ splash domain with compact boundary $\hat{\Gamma}$ in a domain $\Omega_\delta$ with $\delta$-wing constructed from $\Omega$.
 We postulate that a moving domain $\Omega(t)$ in $\Omega_\delta$ is of the form $\Omega^{h(t)}$ defined in Section~\ref{SSH}, where $h=h(y,t)$ is a function on $\hat{\Gamma}$ with value in $(-\delta_1,\delta_1)$.
\begin{enumerate}
\item [(i)] (Coordinate change by the Hanzawa transform)
 For $h$, we consider the Hanzawa transform $\Xi_{h(t)}$ and set
\[
    u_h^\#(y,t):=u\left(\Xi_{h(t)}(y),t\right),\quad
    \varpi_h^\#(y,t)=\varpi\left(\Xi_{h(t)}(y),t\right).
\]
We derive the equation for $u_h^\#$, $\varpi_h^\#$ and $h$ in a fixed splash domain $\Omega$.
 Note that the Hanzawa transform is given for a domain $\Omega^h$ in $\Omega_\delta$ (not in $\mathbb{R}^d$).
\item [(i\hspace{-0.1em}i)] (Linearization)
 We linearize the equation for $u_h^\#$, $\varpi_h^\#$ and $h$ around $h=0$.
\item [(i\hspace{-0.1em}i\hspace{-0.1em}i)] (Analysis of the linearized equation)
 We derive maximal regularity estimate for the linearized equation.
 Here we invoke the assumption that the surface tension coefficient $\sigma>0$.
\item [(i\hspace{-0.1em}v)] (Construction of a solution)
 We estimate nonlinear terms and construct a solution in a short time by Banach's fixed point theorem assuming that $h|_{t=0}$ is small.
 To handle nonlinear terms we invoke assumptions $2/p+d/q<1$, $p>2$, $q>d$
.
\end{enumerate}
In steps (i\hspace{-0.1em}i\hspace{-0.1em}i), (i\hspace{-0.1em}v), the problem is reduced on a chart $\iota_{j,P}(C_r)$ so the embeddedness of the boundary of $\Omega$ is unnecessary.

%%%%%%%%%%%%%%%%%%%%%%%%%%%%%%%%%%%%%%%%%%%%%%%%%
\subsection{Transformed problem} \label{SSCC} % Section 5.1

We write $x=\Xi_{h(t)}(y)$ and its inverse by $y=Z_{h(t)}(y)$.
 We simply write $u^\#$, $\varpi^\#$ instead of $u_h^\#$, $\varpi_h^\#$.

\vspace{0.7em}
\noindent{\bf Momentum and mass conservation laws.}
 We first transform the first two equations of \eqref{ENS}.
 We first observe that
\[
    (\nabla_x u)(x,t)
    =(\nabla_y u^\#)(y,t)\nabla_x y
    = (\nabla_y u^\#)(y,t)-F_1(u^\#,h)
\]
with
\[
    F_1(u^\#,h)
    :=(\nabla_y u^\#)(y,t)(I-\nabla_x y).
\]
Here we use the convention that $(\nabla_x w)_{ij}=(\partial w^i/\partial x_j)$ for a vector $w$.
 By this formula,
\[
    (\operatorname{div}_x u)(x,t)
    =(\operatorname{div}_y u^\#)(y,t)
    -\mathcal{D}(u^\#,h),
\]
where
\[
    \mathcal{D}(u^\#,h)
    :=\sum_{j=1}^d(\partial_{y_j}u^{\#j})(y,t)(1-\partial_{x_j}y_j)
    -\sum_{\substack{j,m=1\\j\neq m}}^d(\partial_{y_m}u^{\#j})(y,t)\partial_{x_j}y_m.
\]
Similarly, for a $d\times d$ matrix $M^\#(y)$, we define the vector $\mathcal{D}(M^\#,h)$ whose $k$th component $\mathcal{D}(M^\#,h)_k$ is set to be
\[
    \mathcal{D}(M^\#,h)_k
    :=\sum_{j=1}^d \partial_{y_j}M_{j_k}^\#(1-\partial_{x_j}y_j)
    -\sum_{j=1}^d \sum_{\ell\neq j} \partial_{y_\ell}M_{j_k}^\# \partial_{x_j}y_\ell.
\]
Then
\[
    \operatorname{div}_x M
    =\operatorname{div}_y M^\#- \mathcal{D}(M^\#,h),
\]
where $M(x)=M^\#\left(Z_h(x)\right)$.
 Thus,
\[
    \operatorname{div}_x \mathbb{S}(u,\varpi)
    =\operatorname{div}_y \mathbb{S}(u^\#,\varpi^\#)+ \mathcal{D}(\varpi^\#I,h)+F_2(u^\#,h)
\]
with
\begin{align*}
   F_2(u^\#,h):=
   &-2\mu\mathcal{D}\left(\mathbb{D}_y(u^\#),h\right)
   -\mu\operatorname{div}_y F_1(u^\#,h)
   -\mu\operatorname{div}_y F_1(u^\#,h)^T \\
   &+\mu\mathcal{D}\left(F_1(u^\#,h),h\right)
   +\mu\mathcal{D}\left(F_1(u^\#,h)^T,h\right).    
\end{align*}
Since
\[
    (\partial_t u)(x,t)
    =(\partial_t u^\#)(y,t)
    +\nabla_y u^\#(y,t)\partial_t Z_h,
\]
we observe that the equation
\[
    \partial_t u+u\cdot\nabla_x u
    -\operatorname{div}\mathbb{S}(u,\varpi)=0
\]
is transformed into
\[
    \partial_t u^\#
    -\operatorname{div}\mathbb{S}(u^\#,\varpi^\#)
    =F_2(u^\#,h)+\mathcal{D}(\varpi^\#,h)-\nabla_y u^\#(y,t)(\partial_t Z_h)
    -u\cdot\nabla_xu.
\]
For later convenience, we eliminate $\varpi^\#$ in the right-hand side.
 Since $\nabla\varpi=-\partial_tu-u\cdot\nabla_xu+2\mu\operatorname{div}_x\mathbb{D}(u)$, we see
\[
    \nabla_y\varpi^\# = G(u^\#,h)\cdot \nabla_y x
\]
with
\begin{align*}
        G(u^\#,h)
        &=(-\partial_t u^\#-\nabla_y u^\# \cdot\partial_t Z_h)
        +\left(-(u^\#\cdot\nabla_y)u^\#+u^\#\cdot F_1(u^\#,h)\right)\\
        &+\mu\left(\operatorname{div}_y(\nabla_yu^\#\cdot\nabla_xy)
        -\mathcal{D}(\nabla_yu^\#\cdot\nabla_xy,h)\right)\\
        &+\mu\left(\operatorname{div}_y(\nabla_yu^\#\cdot\nabla_xy)
        -\mathcal{D}\left((\nabla_yu^\#\cdot\nabla_xy)^T,h\right)\right).
\end{align*}
Thus
\begin{align*}
        \mathcal{D}(\varpi^\#I,h)
        &=\nabla_y\varpi^\#(I-\nabla_xy) \\
        &=G(u^\#,h)\cdot(\nabla_yx-I)
        =: F_3(u^\#,h)
\end{align*}
since $\nabla_yx\cdot\nabla_xy=I$.
 We now observe that
\[
    \partial_t u+(u\cdot\nabla)u
    - \operatorname{div}\mathbb{S}(u,\varpi)=0
\]
is transformed into
\begin{equation} \label{ET1}
    \partial_t u^\# - \operatorname{div}\mathbb{S}(u^\#,\varpi^\#)
    =\mathcal{F}(u^\#,h)
\end{equation}
with $\mathcal{F}(u^\#,h):=F_2(u^\#,h)+F_3(u^\#,h)-(u^\#\cdot\nabla_y)u^\#\cdot\nabla_xy-\nabla_yu^\#\partial_tZ_h$.
 The equation $\operatorname{div}u=0$ is easily transformed into
\begin{equation} \label{ET2}
    \operatorname{div}u^\#
    =\mathcal{D}(u^\#,h).
\end{equation}

\vspace{0.7em}
\noindent{\bf Kinematic condition.}
 We shall write the normal $\mathbf{n}_{h(t)}$ and the normal velocity $V_{\hat{\Gamma}_h}$ of $\hat{\Gamma}_{h(t)}$ as a function on $\hat{\Gamma}$.
 We just recall \cite[Chapter~2]{PS}.
 The normal is of the form
\begin{equation} \label{ENor}
\mathbf{n}_h=\mathcal{B}_h(\mathbf{n} - \mathcal{A}_h)
\end{equation}
with
\begin{align} \label{Ah:Bh}
\mathcal{A}_h := (I-h\mathcal{L})^{-1}\nabla_{\hat{\Gamma}}h, \quad \mathcal{B}_h := \left(1+|\mathcal{A}_h|^2\right)^{-1/2},
\end{align}
where $\mathbf{n}$ denotes the normal of $\hat{\Gamma}$ and $\mathcal{L}$ denotes the Weingarten tensor of $\hat{\Gamma}$; see \cite[(2.44), (2.45)]{PS}.
 Here $\nabla_{\hat{\Gamma}}$ denotes the surface gradient defined by $\nabla_{\hat{\Gamma}}=(I-\mathbf{n}\otimes\mathbf{n})\nabla$.
 The normal velocity $V_{\hat{\Gamma}_h}$ of $\hat{\Gamma}_{h(t)}$ is defined by
\[
    V_{\hat{\Gamma}_h}(y,t)
    =\partial_t\Xi_{h(t)}(y)\cdot\mathbf{n}_{h(t)}(y), \quad
    y\in\hat{\Gamma};
\]
see \cite[Chapter~1, \S3.2 (a)]{PS}.
 Since the Weingarten tensor $\mathcal{L}$ leaves the tangent space $T_{\hat{P}}\hat{\Gamma}$ at a point $\hat{P}\in\hat{\Gamma}$ invariant, we see, by \eqref{ENor},
\[
    V_{\hat{\Gamma}_h}(y,t)=\mathcal{B}_{h(t)}(y)\partial_t h(y,t).
\]
The kinematic condition $V=u\cdot\mathbf{n}$ on $\hat{\Gamma}_h$ can be written as
\[
    \mathcal{B}_{h(t)}(y)\partial_th(y,t)
    =\mathcal{B}_{h(t)}(y)\left(u^\#(y,t)\cdot\mathbf{n}(y)-u^\#(y,t)\cdot\mathcal{A}_{h(i)}(y)\right), \quad
    y\in\hat{\Gamma}.
\]
Rearranging this relation, we obtain
\begin{equation} \label{EKi}
    \partial_t h+u^\#\cdot\nabla_\Gamma h - u^\#\cdot\mathbf{n}=\mathcal{E}(u^\#,h)
\end{equation}
with
\[
    \mathcal{E}(u^\#,h):=-u^\#(\mathcal{A}_{h(t)}-\nabla_\Gamma h).
\]
It turns out it is more convenient to put $u^\#\cdot\nabla_\Gamma h$ in the left since $\mathcal{A}_{h(t)}-\nabla_\Gamma h$ is expected to be small.

\vspace{0.7em}
\noindent{\bf Balance by surface tension.}
 We next write the force balance
\begin{equation} \label{EFB}
    \mathbb{S}(u,\varpi)\mathbf{n}_h
    = \sigma\kappa_h\mathbf{n}_h
\end{equation}
on the boundary $\hat{\Gamma}_h$.
 Since there is no time derivative, we shall fix $t$ and write $h(t)$ simply by $h$.
 Let $\mathbb{P}_h$ denote the tangential projection to $\hat{\Gamma}_h$, i.e.,
 \[
    \mathbb{P}_h:=I-\mathbf{n}_h\otimes\mathbf{n}_h.
 \]
 The equation \eqref{EFB} can be rewritten as
\begin{align}
    \mathbb{P}_h\left(\mathbb{D}(u)\mathbf{n}_h\right) &=0, \label{EFT} \\
    \mathbf{n}_h \mathbb{S}(u,\varpi)\mathbf{n}_h
    &= \sigma\kappa_h. \label{EFN}
\end{align}
By \eqref{ENor}, we see that
\[
    \mathbb{P}_h=\mathbb{P}_0+\mathcal{T}_h, \quad
    \mathbb{P}_0=I-\mathbf{n}\otimes\mathbf{n}
\]
with
\[
    \mathcal{T}_h:=-(\mathcal{B}_h^2-1)\mathbf{n}\otimes\mathbf{n}+\mathcal{B}_h^2
    (\mathcal{A}_h\otimes\mathbf{n}+\mathbf{n}\otimes\mathcal{A}_h-\mathcal{A}_h\otimes\mathcal{A}_h).
\]
By the coordinate change, \eqref{EFT} becomes
\begin{equation} \label{EFTT}
    \mathbb{P}_0 2\mu_0\mathbb{D}_y(u^\#)\mathbf{n}
    =\mathcal{J}_{\tan}(u^\#,h)
\end{equation}
with
\begin{align*}
    \mathcal{J}_{\tan}(u^\#,t) &:= \mathbb{P}_0 2\mu\mathbb{D}_y(u^\#)\mathcal{A}_h \\
    &+\mathbb{P}_0 \mu\left(F_1(u^\#,h)+F_1(u^\#,t)^T\right)\cdot(\mathbf{n}-\mathcal{A}_h) \\
    &+\mathcal{J}_h F_4(u^\#,h)\cdot(\mathbf{n}-\mathcal{A}_h) \\
    F_4(u^\#,h) &:= 2\mu\mathbb{D}_y(u^\#)-\mu F_1(u^\#,h)-\mu F_1(u^\#,h)^T.
\end{align*}
Let us write \eqref{EFN} in our coordinate system.
 Since the Weingarten tensor $\mathcal{L}$ leaves the tangent space $T_P\hat{\Gamma}$ invariant \cite[Chapter~2, \S1.2]{PS}, we have $\mathcal{A}_h\in T_P\hat{\Gamma}$.
 As a result
\[
    \kappa_h=\kappa +\Delta_{\hat{\Gamma}}h+H_1(h),
\]
where
\[
    H_1(h):=(\mathcal{B}_h-1)\kappa+(B_h-1)\Delta_{\hat{\Gamma}}h
    +\mathcal{B}_h(\mathbf{n}-\mathcal{A}_h)C(h).
\]
Here $C(h)$ is a linear combination of $h$ up to its first derivative whose coefficients are bounded and its bound only depends on $C^2$ regularity of $\hat{\Gamma}$.
 (It also includes a zero-th order term which is a given function and its size is also controlled by $C^2$-regularity of $\hat{\Gamma}$).
 The calculation is involved but it is the same as in \cite[\S3.3]{Sh}.
 A more systematic calculation of the mean curvature of a surface given by a height function is given in \cite[Chapter~2, \S2.5, (2.49)]{PS}.
 We do not use detailed form of $C(h)$.

Since
\[
    \mathbf{n}_h 2\mu\mathbb{D}_x(u)\mathbf{n}_h
    =\mathbf{n}2\mu\mathbb{D}_y(u^\#)\mathbf{n}
    -F_5(u^\#,h)
\]
with
\begin{align*}
    F_5(u^\#,h) &:=\mathbf{n}\left(\mu F_1(u^\#,h)+\mu F_1(u^\#,h)^T\right)\mathbf{n}
    -(\mathcal{B}_h^2-1) F_4(u^\#,h)\mathbf{n} \\
    &+\mathcal{B}_h^2 \left\{ \mathbf{n} F_4(u^\#,h)\mathcal{A}_h
    +\mathcal{A}_h F_4(u^\#,h)\mathbf{n}-\mathcal{A}_h F_4(u^\#,h)\mathcal{A}_h\right\},
\end{align*}
we now conclude that \eqref{EFN} can be written as
\begin{equation} \label{EFNT}
    \mathbf{n} 2\mu\mathbb{D}_y(u^\#)\mathbf{n}
    -\varpi^\#-\sigma\kappa-\sigma\Delta_{\hat{\Gamma}}h
    =\mathcal{J}_{\operatorname{nor}}(u^\#,h)
\end{equation}
with
\[
    \mathcal{J}_{\operatorname{nor}}(u^\#,h):=F_5(u^\#,h)+\sigma H_1(h).
\]

We now observe that our equation \eqref{ENS} is transformed into \eqref{ET1}, \eqref{ET2}, \eqref{EKi}, \eqref{EFTT}, \eqref{EFNT}, i.e.,
\begin{equation}
\begin{aligned} \label{ETFul}
    \partial_t u^\#-\operatorname{div}_y\mathbb{S}_y(u^\#,\varpi^\#)
    &=\mathcal{F}(u^\#,h) &&\text{in}\quad \Omega \\
    \operatorname{div}_y u^\#
    &=\mathcal{D}(u^\#,h) &&\text{in}\quad \Omega \\
    \partial_t h+u^\#\cdot\nabla_{\hat{\Gamma}}h-u^\#\cdot\mathbf{n}
    &=\mathcal{E}(u^\#,h) &&\text{on}\quad \hat{\Gamma} \\
    \mathbb{P}_0 2\mu\mathbb{D}_y(u^\#)\mathbf{n}
    &=\mathcal{J}_{\tan}(u^\#,h) &&\text{on}\quad \hat{\Gamma} \\
    \mathbf{n}2\mu\mathbb{D}_y(u^\#)\mathbf{n}-\varpi^\# -\sigma\kappa-\sigma\Delta_{\hat{\Gamma}}h
    &=\mathcal{J}_{\operatorname{nor}}(u^\#,h) &&\text{on}\quad \hat{\Gamma}.
\end{aligned}
\end{equation}
The last two equation \eqref{EFTT}, \eqref{EFNT} can be written in
\begin{equation} \label{EFBT}
    \mathbb{S}(u^\#,\varpi)\mathbf{n}-\sigma\mathbf{n}\Delta_{\hat{\Gamma}}h
    =\mathcal{J}+\sigma\mathbf{n}\kappa
\end{equation}
with $\mathcal{J}=\mathcal{J}_{\tan}+\mathbf{n}\mathcal{J}_{\operatorname{nor}}$.
 Note that the term $\sigma\mathbf{n}\kappa$ is determined if we fix $\hat{\Gamma}$.

It is convenient to prepare estimates of a kind of lift of $\mathcal{A}_h$ defined by
\begin{equation} \label{ELiftA}
    \hat{\mathcal{A}}_h(y)
    = \chi\left(\frac{d(y)}{\delta_1}\right)
    \left(I-(Lh)\mathcal{L}\right)^{-1}
    \nabla_{\hat{\Gamma}} (Lh)\left(\Pi(y)\right), \quad
    y\in\Omega.
\end{equation}
\begin{prop} \label{PDifA}
Assume the same hypotheses of Theorem~\ref{TInv} concerning $\Omega$.
 Assume that $h_i$ ($i=1,2$) satisfies assumptions of $h$ in Theorem~\ref{TInv} with constants $\varepsilon_0$ and $\varepsilon_1$.
 Then by taking $\varepsilon_1$ smaller,
\[
    \left|\hat{\mathcal{A}}_{h_1}(y)-\hat{\mathcal{A}}_{h_2}(y)\right|
    \le C_2 \left(\left|L(h_1-h_2)(y)\right|
    +\left|\nabla L(h_1-h_2)(y)\right|\right), \quad
    y\in\Omega
\]
with some constant $C_2$ depending only on $\varepsilon_0$, $\varepsilon_1$, $\delta_1$ and $\Omega$ through $(r,\delta_0,K)$.
 In particular,
\[
    \left|\hat{\mathcal{A}}_{h_i}(y)\right|
    \le C_2 \left(\left|Lh_i(y)\right|
    +\left|\nabla Lh_i(y)\right|\right), \quad
    i=1,2.
\]
\end{prop}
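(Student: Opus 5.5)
The plan is to treat $\hat{\mathcal{A}}_h$ as a smooth nonlinear function of the pair $(Lh,\nabla Lh)$ evaluated pointwise, and to estimate differences by a resolvent identity, exactly in the spirit of Corollary~\ref{CInv}. Write $M_i:=I-(Lh_i)\mathcal{L}$ and $g_i:=\nabla_{\hat{\Gamma}}(Lh_i)$ (evaluated as in \eqref{ELiftA}). Since the cut-off factor $\chi(d/\delta_1)$ lies in $[0,1]$, it can be dropped from all estimates. The Weingarten tensor $\mathcal{L}$ is bounded by a constant depending only on the $C^2$-regularity, i.e. on $(r,\delta_0,K)$, since $\mathcal{L}=\nabla_{\hat{\Gamma}}\mathbf{n}$. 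The surface gradient satisfies $|\nabla_{\hat{\Gamma}}(Lh)|\le|\nabla Lh|$, because it is the projection $(I-\mathbf{n}\otimes\mathbf{n})\nabla Lh$.

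First, shrink $\varepsilon_1$ so that $\varepsilon_1\|\mathcal{L}\|_\infty\le 1/2$, as was done for $R_h$ in Theorem~\ref{TInv}. Then $\|Lh_i\|_\infty\le\varepsilon_1$ gives, via the Neumann series, that $M_i$ is invertible with $|M_i^{-1}|\le 2$. This already yields the second claim:
\[
|\hat{\mathcal{A}}_{h_i}|\le 2|\nabla Lh_i|.
\]
Second, for the difference, split
\[
\hat{\mathcal{A}}_{h_1}-\hat{\mathcal{A}}_{h_2}
=M_1^{-1}(g_1-g_2)+(M_1^{-1}-M_2^{-1})g_2 .
\]
The first term is bounded by $2|\nabla L(h_1-h_2)|$. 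For the second term, use the resolvent identity
\[
M_1^{-1}-M_2^{-1}=M_1^{-1}(M_2-M_1)M_2^{-1}=M_1^{-1}\bigl(L(h_1-h_2)\bigr)\mathcal{L}M_2^{-1}.
\]
Here the matrices commute, since both are polynomials in the same symmetric $\mathcal{L}$, though this is not even needed. Hence
\[
|(M_1^{-1}-M_2^{-1})g_2|\le 4\|\mathcal{L}\|_\infty\,|L(h_1-h_2)|\,|\nabla Lh_2|\le 4\|\mathcal{L}\|_\infty\varepsilon_0\,|L(h_1-h_2)|,
\]
using $\|\nabla Lh_2\|_\infty\le\varepsilon_0$. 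Collecting the two terms gives the estimate with
\[
C_2=\max\bigl(2,\;4\varepsilon_0\|\mathcal{L}\|_\infty\bigr),
\]
which depends only on the stated quantities.

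The main (mild) obstacle is the evaluation point. In \eqref{ELiftA}, $Lh$ and $\nabla_{\hat{\Gamma}}Lh$ are taken at $\Pi(y)$, while the claimed bound is pointwise at $y$. I would handle this by reading \eqref{ELiftA} with $\mathcal{L}$ and $\mathbf{n}$ evaluated at $\Pi(y)$ but $Lh$ and its tangential gradient at $y$, consistent with how $R_h$ and $D_h$ were treated in Section~\ref{SSIH}. Failing that, I would estimate by the sup over the normal fibre and note that the same argument gives the bound with $y$ replaced by $\Pi(y)$ on the right. In either reading, the algebra above is unchanged.
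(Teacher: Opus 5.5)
Your proof is correct and is the argument the paper has in mind. The paper's proof is only a sketch: it shrinks $\varepsilon_1$ so that $I-(Lh)\mathcal{L}$ is invertible and obtains the second claim by setting $h_2=0$; your Neumann-series bound and resolvent-identity difference estimate fill it in exactly as $K_h^{-1}$ and $(I+R_h)^{-1}$ are handled in Corollary~\ref{CInv}. Two things you do differently are only cosmetic: you bound $|\hat{\mathcal{A}}_{h_i}|$ directly rather than specializing $h_2=0$, and you explicitly deal with the evaluation-point ambiguity in \eqref{ELiftA}, which the paper passes over.
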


This is easy to prove.
 We note that we have to take $\varepsilon_1$ smaller so that $I-(Lh)\mathcal{L}$ is invertible.
 The second assertion follows from the first estimate by taking one of $h_i$ equals zero.

%%%%%%%%%%%%%%%%%%%%%%%%%%%%%%%%%%%%%%%%%%%%%%%%%
\subsection{Linearization and approximation} \label{SSLP} % Section 5.2

We would like to solve the initial value problem for \eqref{ETFul} locally-in-time only smallness assumption on $h|_{t=0}=h_0$ without assuming that initial velocity is small.

We approximate initial data $u_0^\#$ as
\[
    u_\zeta = \frac1\zeta \int_0^\zeta T(s)u_0^\#\, ds, \quad \zeta>0,
\]
where $T(s)$ is an analytic semigroup in $L^q$ satisfying the maximal regularity property, i.e.,
\[
    \left\lVert T(t)u_0\right\rVert_{L^p\left(0,\infty;W^{2,q}(\Omega)\right)}
    +\left\lVert T(t)u_0\right\rVert_{W^{1,p}\left(0,\infty;L^q(\Omega)\right)}
    \le C\lVert u_0\rVert_{B_{q,p}^{2(1-1/p)}}.
\]
Moreover, it satisfies
\[
    \left\lVert T(t)u_0\right\rVert_{L^\infty\left(0,T;B_{p,q}^{2(1-1/p)}\right)}
    \le C\lVert u_0\rVert_{B_{q,p}^{2(1-1/p)}}.
\]
For example, we take $T(s)=e^{s(\Delta-1)}$, where $\Delta$ denotes the Dirichlet Laplacian.
 We linearize \eqref{EKi} around $u_\zeta$ and observe that
\begin{equation} \label{Eki2}
    \partial_t h+(u_\zeta\cdot\nabla_\Gamma)h-u^\#\cdot\mathbf{n}
    =\mathcal{E}+(u_\zeta-u^\#)\nabla_{\hat{\Gamma}}h.
\end{equation}
Note that if we linearize around $u_0^\#$, the regularity of coefficient not enough for analysis of \eqref{EKi}.
 We consider the system \eqref{ET1}, \eqref{ET2}, \eqref{Eki2}, \eqref{EFBT}, i.e., 
\begin{equation}
\begin{aligned} \label{ETFul2}
    \partial_t u^\#-\operatorname{div}_s\mathbb{S}_y(u^\#,\varpi^\#)
    &=\mathcal{F} &&\text{in}\quad \Omega \\
    \operatorname{div}_y u^\#
    &=\mathcal{D} &&\text{in}\quad \Omega \\
    \partial_t h+u_\zeta\cdot\nabla_\Gamma h-u^\#\cdot\mathbf{n}
    &=\mathcal{E}+(u_\zeta-u^\#)\cdot\nabla_\Gamma h &&\text{on}\quad \hat{\Gamma} \\
    \mathbb{S}_y(u^\#,\varpi^\#)\mathbf{n}-\sigma\mathbf{n}\Delta_\Gamma h
    &=\mathcal{J}+\sigma\mathbf{n}\kappa &&\text{on}\quad \hat{\Gamma}
\end{aligned}
\end{equation}
under initial condition
\[
    u^\#|_{t=0}=u_0^\#, \quad
    h|_{t=0}=h_0.
\]
Here $\mathcal{J}$ is a vector so that $(\mathcal{J})_{\tan}=\mathcal{J}_{\tan}$, $\mathcal{J}\cdot\mathbf{n}-\mathcal{J}_{\operatorname{nor}}$.
 Since we are just interested in the existence of a local-in-time solution, lower order linear terms are included in the right-hand side.
 This is different from what is presented in \cite{Sh}, where global-in-time problem is also considered.

%%%%%%%%%%%%%%%%%%%%%%%%%%%%%%%%%%%%%%%%%%%%%%%%%
\subsection{Analysis of liearized problem} \label{SSMax} % Section 5.3

We consider a linear system and state an $L^p$-$L^q$ maximal regularity result for a system for a $C^3$ splash domain with compact boundary.
 This is essentially contained in results of \cite{Sh}, where he treated a general uniformly $C^3$ domain but whose boundary is embedded in $\mathbb{R}^d$.

We consider a generalized Stokes system coupled with a free surface $h$ of the form
\begin{equation}
\begin{aligned} \label{EGS}
    \partial_t u-\operatorname{div}\mathbb{S}(u,\varpi)
    &=f &&\text{in}\quad \Omega \\
    \operatorname{div} u
    &=\operatorname{div} g &&\text{in}\quad \Omega \\
    \partial_t h+U_\zeta\cdot\nabla_\Gamma h-u\cdot\mathbf{n}+L_1(h)
    &=\theta &&\text{on}\quad \hat{\Gamma} \\
    \mathbb{S}(u,\varpi)\mathbf{n}-\left(L_2(h)+\sigma\Delta_\Gamma h\right)\mathbf{n}
    &=\omega &&\text{on}\quad \hat{\Gamma} \\
    \left.(u,h)\right|_{t=0}
    &=(u_0,h_0) &&\text{in}\quad \Omega\times\hat{\Gamma}.
\end{aligned}
\end{equation}
For boundary data $U_\zeta$ we assume that there exist positive constants $M_0$, $a_0$ ($<1$), $b_0$ such that
\begin{equation} \label{EAU}
	\left\{
\begin{array}{l}
     \left|U_\zeta(x)\right|\le M_0,\ 
     \left|U_\zeta(x)-U_\zeta(y)\right|\le M_0,\ x,y\in\hat{\Gamma} \\
     \|U_\zeta\|_{W^{2-1/s,s}(\hat{\Gamma})}\le M_0\zeta^{-b_0}\ 
     \text{for all}\ \zeta\in(0,1)\ \text{for some}\ s\in(d,\infty).
\end{array}
\right.
\end{equation}
The operator $L_1$ and $L_2$ are linear operators with bounds $M_1$ and $M_2$ satisfying
\[
    \left\|L_1(v)\right\|_{W^{2-1/q,q}(\hat{\Gamma})}
    \le M_1\|v\|_{W^{1,q}(\Omega)}, \quad
    \left\|L_2(h)\right\|_{W^{1, q}(\Omega)}
    \le M_2\|h\|_{W^{2,q}(\Omega)};
\]
when we consider $h$ in $\Omega$, we consider its lift $Lh$ in Section~\ref{SH}.

It is convenient to introduce a necessary solenoidal space
\[
    J_q(\Omega):= \left\{ f\in L^q(\Omega) \biggm|
    \int_\Omega f\cdot\nabla\phi\,dx=0
    \ \text{for all}\ \phi\in\hat{W}_0^{1,q'}(\Omega) \right\}
\]
where $q'=q/(q-1)$ represents the conjugate exponent of $q$.
 The next theorem is a trivial modification of the maximal $L^p$-$L^q$ regularity for \eqref{EGS} obtained by \cite[Corollary~4.6]{Sh}.
\begin{thm}[\cite{Sh}] \label{TShMR}
Let $\Omega$ be a $C^3$ splash domain in $\mathbb{R}^d$ with compact boundary $\hat{\Gamma}$ of type $(r,\delta_0,K)$.
 Let $1<p,q<\infty$ with $2/p+1/q\neq1$.
 Let
\[
    (u_0,h_0)\in B_{q,p}^{2(1-1/p)}(\Omega)\times B_{q,p}^{3-1/p-1/q}(\hat{\Gamma})
\]
be an initial data of \eqref{EGS}.
 Then there exists a constant
\[
    \gamma_*= \gamma_*(\mu,\sigma,p,q,r,\delta_0,K,a_0,b_0,M_0,M_1,M_2)>0
\]
such that for any $\gamma\ge\gamma_*$ and
\[
	\left\{
\begin{array}{l}
     f\in L^p\left(0,T;L^q(\Omega)\right),\ 
     \theta\in L^p( 0,T;W^{2-1/q,q}(\hat{\Gamma}) ), \\
     e^{-\gamma t}\operatorname{div}g\in L^p(\mathbb{R};W^{1,q}(\Omega) )
     \cap H^{1/2,p}(\mathbb{R};L^q(\Omega) ), \\
     e^{-\gamma t} g \in W^{1, p}(\mathbb{R};L^q(\Omega) ), \\
     e^{-\gamma t}\omega\in L^p(\mathbb{R};W^{1,q}(\Omega) )
     \cap H^{1/2,p}(\mathbb{R};L^q(\Omega) ) 
     \cap C(\mathbb{R}, B_{q,p}^{1-2/p} (\Omega) )\\
     \text{(where the lift of}\ \omega\ \text{from}\ \hat{\Gamma}\ \text{to}\ \Omega\ \text{is also denoted by}\ \omega\text{)},
\end{array}
\right.
\]
that satisfies compatibility conditions
\begin{gather*}
    u_0-g|_{t=0}\in J_q(\Omega), \\
    \left(\mu\mathbb{D}(u_0)\mathbf{n}\right)_{\tan}
    =\left(\omega|_{t=0}\right)_{\tan}
    \in B_{q,p}^{1-\frac2p-\frac1q}(\Omega)
    \quad\text{if}\quad 2/p+1/q<1,
\end{gather*}
the generalized Stokes problem \eqref{EGS} admits a unique solution $(u,\varpi,h)$ with
\begin{align*}
    & u\in L^p(0,T; W^{2,q}(\Omega) )
    \cap W^{1,p}( 0,T; L^q(\Omega) ) \\
    & \varpi\in L^p( 0,T; W^{1,q}(\Omega)
    + \hat{W}_0^{1,q}(\Omega) ) \\
    & h\in L^p( 0,T; W^{3-1/q,q}(\hat{\Gamma}) )
    \cap W^{1,p}( 0,T; W^{2-1/q,q}(\hat{\Gamma}) )
\end{align*}
that satisfies the estimate
\begin{align*}
&\|u\|_{L^p(0,T;W^{2,q}(\Omega))} + \|\partial_t u\|_{L^p(0,T;L^q (\Omega))} + \|h\|_{L^p(0,T;W^{3-1/q,q}(\hat{\Gamma}))} + \|\partial_t h\|_{L^p( 0,T;W^{2-1/q,q}(\hat{\Gamma}) )} \\
&\ \ \le Ce^{2\gamma\zeta^{- b_0} T} \Bigl\{ \|u_0\|_{B_{q,p}^{2(1-1/p)}(\Omega)}
    + \kappa^{-b_0} \|h_0\|_{B_{q,p}^{3-1/p-1/q}(\hat{\Gamma})} \\
&\ \ \quad + \|f\|_{L^p( 0,T;L^q(\Omega) )} + \|e^{-\gamma t}\partial_t g\|_{L^p(\mathbb{R}; L^q(\Omega))} + \|e^{-\gamma t}(\operatorname{div}g,\omega)\|_{L^p( \mathbb{R}; W^{1,q}(\Omega) )} \\
&\ \ \quad + \|e^{-\gamma t}(\operatorname{div}g,\omega)\|_{H^{1/2,p}(\mathbb{R};L^q(\Omega))}
    + \|\theta\|_{L^p( 0,T;W^{2-1/q,q}(\hat{\Gamma}) )} \Bigr\}
\end{align*}
with some $C=C(\mu,\sigma,p,q,r,\delta_0,K,M_0)>0$ independent of $\zeta\in(0,1)$.
\end{thm}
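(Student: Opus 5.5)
The plan is to reduce Theorem~\ref{TShMR} to Shibata's result \cite[Corollary~4.6]{Sh} by localization on the charts of $\Omega_\delta$. Shibata's proof of maximal $L^p$-$L^q$ regularity for \eqref{EGS} in a uniformly $C^3$ domain has three ingredients. The first is model problems in $\mathbb{R}^d$ and in bent half spaces $C_{r,f}$, where $f$ has small $C^3$ norm, with the free-surface coupling $\partial_t h-u\cdot\mathbf{n}$ and $\sigma\Delta_\Gamma h$ and with $\mathcal{R}$-bounded resolvent estimates for $\lambda$ in a sector $|\lambda|\ge\lambda_0$. The second is gluing these local solutions by a partition of unity to produce a parametrix, with error terms of lower order. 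The third is the weak Dirichlet problem, used to handle the pressure and the solenoidal part. I will check each of these on a splash domain $\Omega$, working throughout on $\Omega_\delta$ with the coordinate patches $\mathcal{U}_r$ from \eqref{ELS} and the partition of unity $\{\varphi_{ij}\}$ from Subsection~\ref{SSS}, together with one interior cut-off.

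\emph{Step 1: local problems.} For each $(i,j)\in\Lambda$, pull back the data by $\iota_{j,P_i}$ to $C_{r,f_{j,P_i}}$. By Definition~\ref{DIm}, $f_{j,P_i}$ satisfies \eqref{EAF1} and \eqref{EAF2} with $K_{f}\le K$, and by Remark~\ref{RIm}(i\hspace{-0.1em}i\hspace{-0.1em}i) we may shrink $r$ to make $K$ small. Each localized problem is therefore exactly one of Shibata's bent half space problems, and it satisfies his resolvent and maximal regularity estimates with constants depending only on $(\mu,\sigma,p,q,r,\delta_0,K)$. The two sheets through a self-intersection point $P$ sit in different charts $\iota_{1,P}\neq\iota_{2,P}$. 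Since the supports of $\varphi_{ij}$ are taken in the manifold $\Omega_\delta$, two overlapping sheets never interact, and embeddedness is never used. The interior piece is a whole-space problem.

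\emph{Step 2: gluing and perturbation.} Define the parametrix $\sum\varphi_{ij}\,\iota_{\#j,P_i}(\text{local solution})$. The commutator terms involve only derivatives of $\varphi_{ij}$ applied to the solution, so they are lower order. Following Shibata, they are absorbed for $\operatorname{Re}\lambda\ge\gamma_*$, which gives the $\mathcal{R}$-bounded solution operator for the resolvent problem of \eqref{EGS}. The terms $U_\zeta\cdot\nabla_\Gamma h$, $L_1$ and $L_2$ are then treated as perturbations:
\begin{itemize}
\item $L_1$ and $L_2$ through the bounds $M_1,M_2$;
\item $U_\zeta$ through \eqref{EAU}, exactly as in \cite{Sh}. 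This is where the factor $e^{2\gamma\zeta^{-b_0}T}$ comes from, because the commutator with $U_\zeta$ costs $\|U_\zeta\|_{W^{2-1/s,s}}\le M_0\zeta^{-b_0}$.
\end{itemize}
Finally, Weis' operator-valued Fourier multiplier theorem converts $\mathcal{R}$-boundedness into the time-dependent $L^p$ estimate. The initial data are handled by the same semigroup extension argument, and the compatibility conditions are used exactly as in \cite{Sh}.

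\emph{Step 3: pressure and solenoidal space.} The only global ingredient is unique solvability of the weak Dirichlet problem in $\hat{W}^{1,q}_0(\Omega)$. This is Lemma~\ref{LWD}, which holds for splash domains with compact boundary. It identifies $J_q(\Omega)$ and the pressure space $W^{1,q}+\hat{W}^{1,q}_0$, and it is what lets the reduced Stokes operator be defined. Uniqueness follows by the same duality argument in $\Omega$. Traces and lifts on $\hat{\Gamma}$ are well defined by Lemma~\ref{LLi}.

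I expect the main obstacle to be verifying that no step in \cite{Sh} silently uses embeddedness. The likely places are global arguments: the reduction of the pressure by a harmonic-type problem, and the definition of $\hat{W}^{1,q}_0$ with boundary values on $\hat{\Gamma}$ rather than on $\partial\Omega$. My approach is to formulate every such argument on $\Omega_\delta$. There, $\Omega$ itself is an honest open subset of $\mathbb{R}^d$, and only its boundary is doubled into $\hat{\Gamma}$. Volume integrals are therefore unchanged, while boundary integrals are summed over both sheets via the charts. With this convention, Green's formula and Lemma~\ref{LWD} apply verbatim, and the remaining estimates of \cite{Sh} carry over line by line.
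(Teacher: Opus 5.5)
Your localization scheme matches the paper's sketch: you take charts and the partition of unity in $\Omega_\delta$ rather than in $\mathbb{R}^d$, reduce the resolvent problem to whole-space and bent half-space problems, use Lemma~\ref{LWD} for the pressure and $J_q(\Omega)$, and absorb $L_1,L_2$ into $\gamma_*$.

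\textbf{Uniqueness.} You say it ``follows by the same duality argument in $\Omega$''. However, the duality argument of \cite[Remark~4.5]{Sh} is available only when $U_\zeta=0$, and the paper says explicitly that it does not seem to work for $U_\zeta\neq0$. Duality requires solving the adjoint problem in the dual maximal-regularity class. That adjoint problem contains the adjoint transport operator $-\operatorname{div}_{\hat{\Gamma}}(U_\zeta\,\cdot)$, so its coefficients involve $\nabla U_\zeta$, and its solvability does not follow from the theory under \eqref{EAU}. The paper instead obtains uniqueness from a priori estimates for solutions of \eqref{EGS} with $U_\zeta\neq0$. These need an extra hypothesis of \cite{Sh}: the interior of $\Omega$ must admit a finite covering \cite[Definition~2.3]{Sh}. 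That hypothesis holds here only because $\hat{\Gamma}$ is compact ($\Omega$ bounded or exterior), so your argument must use compactness at this point as well, not only through Lemma~\ref{LWD}.

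\textbf{The transport term.} In Step~2 you treat $U_\zeta\cdot\nabla_\Gamma h$, together with $L_1,L_2$, as a perturbation absorbed by taking $\gamma$ large, while your Step~1 model problems contain no transport term. This does not close, because the term is not lower order. For $h\in W^{3-1/q,q}(\hat{\Gamma})$ it lies exactly in the data class $W^{2-1/q,q}(\hat{\Gamma})$ of $\theta$. Its leading part is bounded by $\|U_\zeta\|_{L^\infty}\|h\|_{W^{3-1/q,q}(\hat{\Gamma})}$, which gains no negative power of $|\lambda|$. Put differently, surface tension smooths $h$ by exactly one derivative, which is the same order as the transport symbol $iU_\zeta\cdot\xi'$. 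A Neumann series around the transport-free problem therefore closes only for small $M_0$. The paper, following \cite{Sh}, instead freezes $U_\zeta$ at a point of each chart and keeps the constant-coefficient transport term in the model problem \eqref{ERe1}; this is why the multiplier bounds, and hence $C$, depend on $M_0$. Only two pieces are treated perturbatively: the oscillation $U_\zeta(x)-U_\zeta(x_0)$ on small charts, and the terms carrying derivatives of $U_\zeta$. The latter are of size $M_0\zeta^{-b_0}$ and produce the factor $e^{2\gamma\zeta^{-b_0}T}$. With these two corrections, the rest of your outline agrees with the paper's sketch.
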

\begin{rem} \label{RBes}
Since this theorem is essentially taken from \cite[Corollary~4.6]{Sh} which corresponds to the case where $\partial\Omega$ is embedded, in this article it is sufficient for us to clarify the dependence of constants without presenting the whole detailed proof.
 Moreover, since $u$ must belong to $C( [0,T);B_{q,p}^{2(1-1/p)}(\Omega) )$ by $u\in Y_T$, $\mathbb{D}(u)$ is continuous from $[0,T)$ to $B_{q,p}^{1-2/p}(\Omega)$.
 This means that $\omega\in C( [0,T);B_{q,p}^{1-2/p}(\Omega) )$.
 The compatibility condition for $\omega$ is necessary when $1-2/p-1/q>0$ because the spatial trace exists for this exponent.
 We thus add $\omega\in C( [0,\infty);B_{q,p}^{1-2/p}(\Omega) )$.
\end{rem}
\begin{proof}[Brief sketch of the proof]
Actually, the proof is very involved.
 To show the maximal regularity, we prove $\mathcal{R}$-sectoriality by considering its resolvent problem.
 The detailed proof is found in \cite[Section~5]{Sh}.

The problem is easily reduced to the case without $L_1$ and $L_2$ (cf.\ \cite[\S5.9]{Sh}).
 By considering a suitable partition of unity (cf.\ \cite[Proposition~2.2]{Sh}) the resolvent problem corresponding to \eqref{EGS} is reduced to the problem in whole space and in a slightly perturbed $C^3$ half-space.
 For a splash domain we consider a partition of unity in a neighborhood not of the closure of $\Omega$ in $\mathbb{R}^d$ but of the closure of $\Omega$ in $\Omega_\delta$ i.e., $\Omega$ with $\delta$-wing.
 Once we notice this fact, all remaining procedures are the same as \cite[\S5.6--5.9]{Sh}.

Let us briefly recall a problem for a perturbed half-space $\mathbb{R}_f^d=\left\{(x',x_d)\in\mathbb{R}^d; f(x')<x_d\right\}$.
 By a suitable diffeomorphism from the half-space $\mathbb{R}_+^d$ to $\mathbb{R}_f^d$, the resolvent problem is reduced to a perturbation of the problem in a half-space, i.e.,
\begin{equation} \label{ERe1}
	\left\{
\begin{aligned}
    \lambda u - \operatorname{div}\mathbb{S}(u,\varpi)
    &=f_0 &&\quad\text{in}\quad \mathbb{R}_+^d \\
    \operatorname{div} u
    &=\operatorname{div}g_0 &&\quad\text{in}\quad \mathbb{R}_+^d \\
    \lambda^h+U_\zeta\cdot\nabla'h-u\cdot\mathbf{n}
    &=\theta_0 &&\quad\text{on}\quad \partial\mathbb{R}_+^d \\
    \left(\mathbb{S}(u,\varpi)-\sigma\Delta'h\right)\mathbf{n}
    &=\omega_0 &&\quad\text{on}\quad \partial\mathbb{R}_+^d,
\end{aligned}
\right.
\end{equation}
where $\lambda$ is a complex parameter.
 We may assume $U_\zeta$ is a constant by localization.
 In this procedure, we see $\gamma_*$ depends on the regularity of $\hat{\Gamma}$.
 Let us concentrate \eqref{ERe1}.
 When $(f_0,\operatorname{div}g_0,\omega_0)=0$, the resolvent problem \eqref{ERe1} can be transformed into an ODE system by applying the partial Fourier transform to the tangential direction.
 The explicit formula of the Fourier transform $(\hat{u},\hat{\varpi},\hat{h})$ is obtained (cf.\ \cite[Theorem~5.13]{Sh}).
 In this case, $\mathcal{R}$-boundedness of the resolvent is not diffcult by checking the Fourier multiplier of the solution operator.
 The dependence of $r_*$ on $\sigma$, $\mu$, $M_0$ is due to the estimate in the Fourier multipliers (cf.\ \cite[Lemma~5.14]{Sh}).
 Note that $U_\zeta$ is assumed to be a constant by localization and perturbation.
 If $\sigma=0$, then we can still write a solution explicitly but there is no regularizing effect for $h$ so we do not get maximal regularity estimates.

The general case of \eqref{EGS} with $(f_0,\operatorname{div}g_0,\omega_0)\neq0$ can be reduced to the case where $(f_0,\operatorname{div}g_0,\omega)=0$ by solving the resolvent problem of the Stokes equations:
\[
	\left\{
\begin{aligned}
    \lambda u - \operatorname{div}\mathbb{S}(u,\varpi)
    &=f_* &&\text{in}\quad \mathbb{R}_+^d \\
    \operatorname{div} u
    &=\operatorname{div}g_* &&\text{in}\quad \mathbb{R}_+^d \\
    \mathbb{S}(u,\varpi)
    &=\omega_* &&\text{on}\quad \mathbb{R}^d
\end{aligned}
\right.
\]
(cf.\ \cite[\S5.3]{Sh}).
 To kill the pressure term we usual use some Helmholtz decompositions which is obtained by solving the weak Dirichlet problem mentioned in Section~\ref{SSL}.
 This is why we mention the solvability of a weak Dirichlet problem in Section~\ref{SSL}.
 The full resolvent problem including $L_1$, $L_2$ can be solved by considering a perturbation problem for the resolvent problem \eqref{ERe1} (cf.\ \cite[\S5.9]{Sh}).
 This is the reason why $\gamma_*$ deoends on $M_1$, $M_2$.
 Note that these perturbations are in some sense lower order perturbations.
 The dependence $C$ on $\mu$, $\sigma$, $M_0$ is due to the fact that the $\mathcal{R}$-bound for the solution operator of \eqref{ERe1}.
 The reason that $C$ is independent of $M_1$, $M_2$ is that the perturbation by $L_1$ and $L_2$ is squeezed in $\gamma_*$ by taking $\gamma_*$ larger.

We have thus constructed a solution with necessary estimates.
 As noted in \cite[Remark~4.5]{Sh}, the uniqueness of a solution follows by duality if $U_\zeta=0$.
 However, in the case of $U_\zeta\neq0$, the duality argument does not seem to work and one has to derive a priori estimates which need further assumptions on $\Omega$ such that inside $\Omega$ has a finite covering \cite[Definition~2.3]{Sh} which is fulfilled for an exterior or a bounded domain.
\end{proof}

%%%%%%%%%%%%%%%%%%%%%%%%%%%%%%%%%%%%%%%%%%%%%%%%%
\subsection{Remark on local solvability} \label{SSLS} % Section 5.4

We should solve \eqref{ETFul2} by using Theorem~\ref{TShMR}.
 The function spaces where constructed solution lies in $X_T$ for $h$ and $Y_T$ for $u$ and $Z_T$ for $\varpi$ as in Section~\ref{SSL}.
 We consider
\begin{equation}
\begin{aligned} \label{EMap}
    \partial_t u-\operatorname{div}_y\mathbb{S}_y(u,\varpi)
    &=\mathcal{F}(v,k) &&\text{in}\quad \Omega \\
    \operatorname{div}_y u
    &=\mathcal{D}(v,k) &&\text{in}\quad \Omega \\
    \partial_t h+u_\zeta\cdot\nabla_\Gamma h-u\cdot\mathbf{n}
    &=\mathcal{E}(v,k) &&\text{on}\quad \hat{\Gamma} \\
    \mathbb{S}_y(u,\varpi)\mathbf{n}-\sigma\mathbf{n}\Delta_\Gamma h
    &=\mathcal{J}(v,k)+\sigma\mathbf{n}\kappa &&\text{on}\quad \hat{\Gamma}
\end{aligned}
\end{equation}
for $(k,v)\in X_T\times Y_T$ and consider a mapping $\mathcal{S}$ from $(k,v)$ to $(h,u)$ by applying  Theorem~\ref{TShMR}.
 To prove Theorem~\ref{TLW}, we construct a self-mapping by taking a ball in $X_T\times Y_T$ of the form
\[
    \mathbb{B}=\left\{k\bigm|
    \|k\|_{X_T}\le\varepsilon'\right\}
    \times \left\{v\bigm|
    \|v\|_{Y_T}\le B'\right\}
\]
with small $\varepsilon'>0$ and small $T$, where $B'\le CB$ with some constant $C$.
The terms involving $k$ in \eqref{EMap} are small and contribution to $h$ and $u$ from the right-hand side is small for small $T$.
The whole process is tedious but it is possible.
 For the linear estimates in Theorem~\ref{TShMR}, we only invoke the case $L_1=0$, $L_2=0$ since we are just interested in local-in-time solvability and lower order linear terms are harmless.
 To estimate nonlinear terms $\mathcal{F}$ and $\mathcal{D}$, Lemma~\ref{LDif} with $h_2=0$ plays a key note.
 To estimate $\mathcal{E}$, we use estimate in Proposition~\ref{PDifA} only on $\hat{\Gamma}$.
 To estimate $\mathcal{J}_{\tan}$ and $\mathcal{J}_\mathrm{nor}$ we fully use Proposition~\ref{PDifA} with its time derivative estimates when $h_i$ depends on time (Lemma~\ref{Est:Ah:FBC}).
 To control the norm of its lift $\hat{\mathcal{J}}$ in $H_p^{1/2}\left(0,T;L^q(\Omega)\right)\cap L^p\left(0,T;W^{1,q}(\Omega)\right)$.
 In \cite{Sh}, the estimates for $\mathcal{A}_h$ and $\hat{\mathcal{A}}_h$ are not used so the proof is more involved.
The reason we use the lift as in \cite{Sh} is that we need to control pressure which is obtained by a kind of the Helmholtz decomposition.

Once we prove that $\mathcal{S}$ is a self-mapping in $\mathbb{B}$, i.e., $\mathcal{S}:\mathbb{B}\to\mathbb{B}$, it remains to prove that $\mathcal{S}$ is a strict construction by taking $T$ smaller.
 Then, Banach's fixed point theorem yields a unique fixed point of $\mathcal{S}$.
 This fixed point is exactly the solution what we seek for.
 In \cite{Sh}, the proof of strict contractivity of $\mathcal{S}$ is not explicit but we are able to prove by applying Lemma~\ref{LDif} and Proposition~\ref{PDifA} with its time derivative estimates for $\partial_t\hat{\mathcal{A}}_{h(t)}$ (Lemma~\ref{Est:Ah:FBC}).
 This is a brief idea of the proof of Theorem~\ref{TLW}.

%%%%%%
\subsection{Some technical issues}
\label{sub:techiss}

In the solvability of the linear Stokes system \eqref{EGS}, we note in particular that the free boundary data is given as a bulk quantity rather than a boundary quantity (cf.\ Theorem~\ref{TShMR}).
 This is because in Shibata's theory for the linear Stokes system, the estimate of the pressure is obtained by connecting the pressure to the fluid velocity and the height function that parameterizes the free boundary, through the weak Dirichlet operator which solves the Dirichlet boundary value problem for the Poisson equation weakly.
 This procedure corresponds to get a pressure by a kind of the Helmholtz decomposition.
Since the input for the weak Dirichlet operator should be functions defined on the entire domain $\Omega$, data defined only on the boundary $\Gamma$ cannot be input into the weak Dirichlet operator directly.
 To cope with this difficulty, Shibata considers the lifting of the boundary data, cf.\ \cite[Remark~2.6]{Sh}.
This finally results in the necessity that the free boundary data to be given as a bulk quantity in the solvability theorem for the linear Stokes system.
 As a result, when we try to estimate the free boundary condition in showing that the solution map $\mathcal{S}: \mathbb{B} \to \mathbb{B}$ is indeed a contraction map, we also have to consider the lifting of the boundary data to the bulk space.

Moreover, we estimate the time-dependence of the free boundary condition in the space of Bessel potentials.
 In \cite{Sh}, he only used the space on $\mathbb{R}$, apart from the lifting we also need to consider an extention in time for both the fluid velocity and the height function.
 This process may be avoided by considering the space of Bessel potentials on an interval as in \cite{PS}, but we follow Shibata's approach.
 This is why extensions in time for $u$ and $h$ are done in \cite[Section~6]{Sh}.

Let us explain his extension. 
 For a given function $f$ defined on $(0,T)$, we set the extension $f^\ast$ of $f$ in time to be
\begin{eqnarray*}
f^\ast(t) :=
\left\{
\begin{array}{lcl}
0 &\text{for}& t \leq 0, \\
f(t) &\text{for}& 0<t<T, \\
f(2T - t) &\text{for}& T<t<2T, \\
0 &\text{for}& t>2T.
\end{array}
\right.
\end{eqnarray*}
We pick $\psi \in C^\infty(\mathbb{R})$ that satisfies $\psi  \geq 0 $ in $\mathbb{R}$, $\psi = 1$ for $t > -1$ and $\psi = 0$ for $t < -2$.
With $\lambda$ being chosen sufficiently large, the Stokes system 
\begin{equation*}
\left\{
  \begin{aligned}
  \partial_t w + \lambda w - \operatorname{div} \mathbb{S}(w, \tau) &= 0& &\text{in}& &\Omega, \\
  \operatorname{div} w &= 0& &\text{in}& &\Omega, \\
  \partial_t \tau + \lambda \tau - w \cdot \mathbf{n} &= 0& &\text{on}& &\hat{\Gamma}, \\
  \mathbb{S}(w, \tau) \mathbf{n} - \sigma \Delta_{\hat{\Gamma}} \tau \mathbf{n}&= 0& &\text{on}& &\hat{\Gamma}, \\ 
  (w, \tau) \bigm|_{t=0} &= (0, h_0)& &\text{in}& &\Omega \times \hat{\Gamma}
  \end{aligned}
\right.
\end{equation*}
admits a unique solution $(w, \tau) \in X_T \times Y_T$; see e.g.\ \cite[Theorem~4.3]{Sh}.
Then, the extension $\widetilde{h}$ of $h$ in time can be constructed by
\begin{align} \label{Ext:time:h}
\widetilde{h}(t,\cdot) := h^\ast(t,\cdot) - \tau^\ast(t,\cdot) + \psi(t) \tau^\ast(|t|, \cdot), \quad t \in \mathbb{R}. 
\end{align}
Since $h(0,\cdot) - \tau(0,\cdot) = 0$, we note that $\widetilde{h}$ is differentiable with respect to $t$ for all $t \in \mathbb{R}$.
As a result, we can define the extension of $\mathcal{A}_h$ and $\mathcal{B}_h$, which is defined by formula \eqref{Ah:Bh}, in both time and spatial variable to be
\begin{align} \label{Lift:Aeta}
\hat{\mathcal{A}}_h := \chi\Big( \frac{d(\cdot, \hat{\Gamma})}{\delta_1} \Big) ( I - (L \widetilde{h}) \mathcal{L} \circ \Pi )^{-1} \nabla_{\hat{\Gamma}} ( (L \widetilde{h}) \circ \Pi ), \quad \hat{\mathcal{B}}_h := (1 + |\hat{\mathcal{A}}_h|^2)^{-\frac{1}{2}}
\end{align}
as is defined in \eqref{ELiftA}.
 We also define the lifting of $\mathbf{n}$ in spatial variable to be
\begin{equation}
    \hat{\mathbf{n}}:=\chi\left(\frac{d(\cdot,\hat{\Gamma})}{\delta_1}\right)
    \mathbf{n}\circ\Pi.
\end{equation} 
 On the other hand, the extension $\widetilde{u}$ of $u$ in time can be constructed as follows.
First, we pick a continuous extension $\overline{u}_0 \in B_{q,p}^{2(1 - 1/p)}(\mathbb{R}^d)$ of $u_0 \in B_{q,p}^{2(1 - 1/p)}(\Omega)$ in spatial variable $x$ that satisfies $\overline{u}_0 = u_0$ in $\Omega$ and set 
\begin{align*}
E(t) u_0 := \mathrm{e}^{- (2 - \Delta)t} \overline{u}_0 := \mathscr{F}^{-1}[ \mathrm{e}^{- (2 + |\xi|^2)t} \mathscr{F}[\overline{u}_0](\xi) ].
\end{align*}
The extension $\widetilde{u}$ of $u$ in time can then be constructed by
\begin{align*}
\widetilde{u}(t,\cdot) := u^\ast(t,\cdot) - [E(t) u_0]^\ast + \psi(t) [E(|t|) u_0]^\ast.
\end{align*}
Similarly, as $u_0 - E(0) u_0 = 0$, $\widetilde{u}$ is differentiable with respect to $t$ for all $t \in \mathbb{R}$.
Finally, we define $( \mathcal{F}(\widetilde{v}, \widetilde{k}), \mathcal{D}(\widetilde{v}, \widetilde{k}), \mathcal{E}(\widetilde{v}, \widetilde{k}),\mathcal{J}(\widetilde{v}, \widetilde{k}) )$ in the Stokes system \eqref{EMap} by replacing $(v, k, \mathcal{A}_k, \mathbf{n})$ in $( \mathcal{F}, \mathcal{D}, \mathcal{E}, \mathcal{J} )$ by $(\widetilde{v}, \widetilde{k}, \hat{\mathcal{A}}_k, \hat{\mathbf{n}})$.

To show that the solution map $\mathcal{S}: (v,k) \mapsto (u,h)$ is a contraction mapping, we mainly need two interpolation inequalities
\begin{align}
&\| v \|_{L^\infty ( 0,T; B_{q,p}^{2(1 - 1/p)}(\Omega) )} \notag \\
&\ \ \leq C \{ \| v_0 \|_{B_{q,p}^{2(1 - 1/p)}(\Omega) } + \| v \|_{L^p ( 0,T; W^{2,q}(\Omega) )} + \| \partial_t v \|_{L^p ( 0,T; L^q(\Omega) )} \} \label{IntPo1:Dom}
\end{align}
and
\begin{align}
&\| h \|_{L^\infty (0,T; B_{q,p}^{3 - 1/p - 1/q}(\hat{\Gamma}) )} \notag \\
&\ \ \leq C \{ \| h_0 \|_{B_{q,p}^{3 - 1/p - 1/q}(\hat{\Gamma}) }
+ \| h \|_{L^p (0,T; W^{3 - \frac{1}{q}, q}(\hat{\Gamma}) )} + \| \partial_t h \|_{L^p (0,T; W^{2 - \frac{1}{q}, q}(\hat{\Gamma}) )} \}\label{IntPo2:Bdy}
\end{align}
together with the inequality
\begin{align}
\| h \|_{L^\infty( 0,T; W^{2 - \frac{1}{q},q}(\hat{\Gamma}) )} \leq \| h_0 \|_{W^{2 - \frac{1}{q}, q}(\hat{\Gamma}) } + T^{\frac{1}{p'}} \| \partial_t h \|_{L^p ( 0,T; W^{2 - \frac{1}{q},q}(\hat{\Gamma}) )},\label{etr:inf:w2q}
\end{align}
cf.\ \cite[Section~6]{Sh}.
The estimate constants in \eqref{IntPo1:Dom} and \eqref{IntPo2:Bdy} depend only on the boundary regularity, and the inequality \eqref{etr:inf:w2q} can be easily derived by applying Minkowski's integral inequality to estimate the $W^{2 - \frac{1}{q}, q}$-norm of 
\begin{align*}
h(x,t) = h_0(x) + \int_0^t (\partial_s h)(x,s) \, ds \quad \text{where} \quad (t,x) \in (0, T) \times \hat{\Gamma}.
\end{align*}

In order to obtain the difference estimate for $\mathcal{J}$, we would need the following product rule.
Suppose that $1 < p < \infty$, $d<q<\infty$ and $0<T\leq 1$.
For 
\begin{align*}
g \in H_{q,p}^{1,\frac{1}{2}}(\Omega \times \mathbb{R}) := H_p^{\frac{1}{2}}( \mathbb{R}; L^q(\Omega) ) \cap L^p( \mathbb{R}; W^{1,q}(\Omega) )
\end{align*}
and $f \in W^{1,\infty}( \mathbb{R}; L^q(\Omega) ) \cap L^\infty( \mathbb{R}; W^{1,q}(\Omega) )$ that satisfies $\partial_t f \in L^p( (0,\infty); W^{1,q}(\Omega) )$ and $f$ vanishes for $t \notin (0, 2T)$, there exists a constant $C_{p,q}>0$, which depends only on $p$ and $q$, such that
\begin{align}
&\| f g \|_{H_{q,p}^{1,\frac{1}{2}}(\Omega \times \mathbb{R})} \notag \\
&\leq C_{p,q} \{ \| f \|_{L^\infty( \mathbb{R}; W^{1,q}(\Omega) )} + T^{\frac{q-d}{pq}} \| \partial_t f \|_{L^\infty( \mathbb{R}; L^q(\Omega) )}^{1 - \frac{d}{2q}} \| \partial_t f \|_{L^p( \mathbb{R}; W^{1,q}(\Omega) )}^{\frac{d}{2q}} \} \| g \|_{H_{q,p}^{1,\frac{1}{2}}(\Omega \times \mathbb{R})}, \label{ProR2:qp1h}
\end{align}
cf.\ \cite[Lemma~2.6]{ShSh07}.
Apart from this product rule, we need in addition an estimate for $\hat{\mathcal{A}}_h$ which reads as follows.

\begin{lem} \label{Est:Ah:FBC}
Suppose that $h_1, h_2 \in X_T$ satisfy the size condition in Theorem~\ref{TLW}. Then, there exist constants $C = C(p,q,d,r,\delta_0,K) > 0$ and $\mathcal{M} = \mathcal{M}(p,q,d,r,\delta_0,K,B) > 0$ such that if $\varepsilon + T^{\frac{1}{p'}} \mathcal{M} < (2 C)^{-1}$, the $\hat{A}_{h_i}$ defined by \eqref{Lift:Aeta} with $i = 1,2$ can be estimated by
\begin{align} \label{W1q:es:Ah}
\| \hat{A}_{h_i} \|_{L^\infty( \mathbb{R}; W^{1,q}(\Omega) )} \leq C (\varepsilon + T^{\frac{1}{p'}} \mathcal{M})
\end{align}
and
\begin{align} \label{Lq:es:dtAh}
\| \partial_t \hat{\mathcal{A}}_{h_i} \|_{L^\infty( \mathbb{R}; L^q(\Omega) )} \leq C \mathcal{M}
\end{align}
Moreover, the difference $\hat{A}_{h_1} - \hat{A}_{h_2}$ can be estimated by
\begin{align}
\| \hat{\mathcal{A}}_{h_1} - \hat{\mathcal{A}}_{h_2} \|_{L^\infty( \mathbb{R}; W^{1,q}(\Omega) )} \leq C T^{\frac{1}{p'}} \| \partial_t (h_1 - h_2) \|_{L^p(0,T; W^{2 - \frac{1}{q}, q}(\hat{\Gamma}) )} \label{difA:est1}
\end{align}
and
\begin{align}
&\| \partial_t \hat{\mathcal{A}}_{h_1} - \partial_t \hat{\mathcal{A}}_{h_2} \|_{L^\infty( \mathbb{R}; L^q(\Omega) )} \notag \\
&\ \ \leq C \{ \| \partial_t (h_1 - h_2) \|_{L^\infty ( 0,T; W^{1 - \frac{1}{q},q}(\hat{\Gamma}) )} + \| \partial_t (h_1 - h_2) \|_{L^p( 0,T; W^{2 - \frac{1}{q},q}(\hat{\Gamma}) )} \}. \label{Dif:dtAh}
\end{align}
\end{lem}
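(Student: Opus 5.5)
The plan is to treat $\hat{\mathcal{A}}_h$ in \eqref{Lift:Aeta} as a smooth nonlinear function of the lifted quantities $L\widetilde{h}$ and $\nabla L\widetilde{h}$, and to reduce every estimate to bounds on these. Write $\hat{\mathcal{A}}_h = \chi\,G(L\widetilde h)\,\nabla_{\hat\Gamma}((L\widetilde h)\circ\Pi)$ with $G(s)=(I-s\mathcal{L}\circ\Pi)^{-1}$. Since $\mathcal{L}$ is $C^1$ ($\hat\Gamma$ is $C^3$), $G$ is smooth with bounded derivatives as long as $|s|\,\|\mathcal{L}\|_\infty\le 1/2$. The first step is therefore to check that the extension $\widetilde h$ from \eqref{Ext:time:h} satisfies
\[
\|L\widetilde h\|_{L^\infty(\mathbb{R};W^{2,q}(\Omega))}\le C(\varepsilon+T^{1/p'}\mathcal{M}),
\]
where $\mathcal{M}$ bounds $\|\partial_t h\|_{L^p(0,T;W^{2-1/q,q})}$ (of order $CB$ by Theorem~\ref{TLW}) together with the corresponding norms of $\tau$. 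This uses \eqref{etr:inf:w2q} for $h$ and $\tau$ on $(0,T)$, Lemma~\ref{LLi}, and the fact that $h^*,\tau^*$ and $\psi\tau^*(|t|)$ only reflect or cut off in time, so their sup norms are controlled by those on $(0,T)$ together with $\|h_0\|\le\varepsilon$. Since $q>d$, we have $W^{2,q}\subset W^{1,\infty}$, so $\|L\widetilde h\|_{W^{1,\infty}}$ is small once $C(\varepsilon+T^{1/p'}\mathcal{M})<1/2$. This guarantees that $G$ is well defined.

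Next, \eqref{W1q:es:Ah} follows from the product rule in $W^{1,q}$. We have $\nabla\hat{\mathcal{A}}_h$ equal to $\nabla\chi\cdot(\cdots)$ plus $G'(L\widetilde h)\nabla L\widetilde h\otimes\nabla L\widetilde h$ plus $G\,\nabla^2 L\widetilde h$, plus terms with $\nabla\mathcal{L}$. Each term is bounded in $L^q$ by $C\|L\widetilde h\|_{W^{2,q}}(1+\|L\widetilde h\|_{W^{1,\infty}})$. For \eqref{Lq:es:dtAh}, differentiate in $t$:
\[
\partial_t\hat{\mathcal{A}}_h=\chi\bigl[G'(L\widetilde h)\,\partial_tL\widetilde h\,\nabla_{\hat\Gamma}L\widetilde h+G\,\nabla_{\hat\Gamma}\partial_tL\widetilde h\bigr],
\]
so that
\[
\|\partial_t\hat{\mathcal{A}}_h\|_{L^q}\le C\|\partial_t L\widetilde h\|_{W^{1,q}}\le C\|\partial_t\widetilde h\|_{W^{1-1/q,q}(\hat\Gamma)}.
\]
The supremum in time of the latter is where the main obstacle lies. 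Plain $L^p$ control of $\partial_t h$ does not give an $L^\infty$-in-time bound. I would use the equation itself: by the kinematic relation \eqref{EKi}, $\partial_t h=u^\#\cdot\mathbf n-u^\#\cdot\nabla_\Gamma h+\mathcal{E}$. Here $u^\#\in L^\infty(0,T;B^{2(1-1/p)}_{q,p})\subset L^\infty(0,T;W^{1,q})$ by \eqref{IntPo1:Dom}, and its trace lies in $W^{1-1/q,q}$. The analogous statement for $\tau$ follows from its equation. This produces a constant $\mathcal{M}$ depending on $B$, and it explains why the hypothesis \eqref{Dif:dtAh} carries an $L^\infty(0,T;W^{1-1/q,q})$ norm of $\partial_t(h_1-h_2)$. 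The contribution of $\psi\tau^*(|t|)$ is handled in the same way.

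For the difference estimates, write
\[
\hat{\mathcal{A}}_{h_1}-\hat{\mathcal{A}}_{h_2}=\chi\bigl[(G(L\widetilde h_1)-G(L\widetilde h_2))\nabla L\widetilde h_1+G(L\widetilde h_2)\nabla L(\widetilde h_1-\widetilde h_2)\bigr],
\]
with $G(a)-G(b)=G(a)(a-b)\mathcal{L}G(b)$. Taking the $W^{1,q}$ norm with the same product rules bounds this by $C\|L(\widetilde h_1-\widetilde h_2)\|_{L^\infty W^{2,q}}$. Since $h_1(0)=h_2(0)$, the $\tau$-parts coincide, so $\widetilde h_1-\widetilde h_2=(h_1-h_2)^*$. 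Then \eqref{etr:inf:w2q} with zero initial datum gives \eqref{difA:est1}. Finally, for \eqref{Dif:dtAh}, expand $\partial_t\hat{\mathcal{A}}_{h_1}-\partial_t\hat{\mathcal{A}}_{h_2}$ by the same telescoping. Terms containing $\partial_t L(\widetilde h_1-\widetilde h_2)$ are bounded in $L^q$ by $\|\partial_t(h_1-h_2)\|_{L^\infty W^{1-1/q,q}}$. Terms containing differences of $G$ or of $\nabla L\widetilde h$, multiplied by $\partial_tL\widetilde h_i$, are bounded using $L^\infty$ smallness of the differences, which in turn comes from $\|\partial_t(h_1-h_2)\|_{L^p W^{2-1/q,q}}$ via \eqref{etr:inf:w2q} and Sobolev embedding, times the uniform bound $\mathcal{M}$ absorbed into $C$.
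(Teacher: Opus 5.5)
Your proof is correct and follows the paper's own argument, made explicit: the paper only says to argue as in Proposition~\ref{PDifA} (resolvent identity and Neumann series for $(I-(L\widetilde h)\mathcal{L})^{-1}$) and to combine \eqref{etr:inf:w2q} with the Banach algebra property of $W^{1,q}(\Omega)$ and $W^{1-1/q,q}(\hat{\Gamma})$ for $q>d$, which is exactly your scheme. Two refinements: first, your use of the kinematic equation to get $\partial_t h\in L^\infty(0,T;W^{1-1/q,q}(\hat{\Gamma}))$ is the ingredient the paper hides in $\mathcal{M}$ via \cite[Section~6]{Sh}, and since the lemma is applied to arbitrary elements of the contraction ball (where no equation holds), this bound should be built into the ball and shown to be preserved by the linearized kinematic equation; second, in your last step $\mathcal{M}$ should not be absorbed into $C$ but paired with the factor $T^{1/p'}$ from \eqref{etr:inf:w2q} and controlled by the hypothesis $T^{1/p'}\mathcal{M}<(2C)^{-1}$, so that $C$ stays independent of $B$.
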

\begin{proof}
This lemma can be proved as in Proposition~\ref{PDifA}.
 Indeed, this lemma can be derived by combining the inequality \eqref{etr:inf:w2q} with the fact that $W^{1,q}(\Omega)$ and $W^{1 - \frac{1}{q},q}(\hat{\Gamma})$ are both Banach algebra provided that $d < q < \infty$.
\end{proof}

By \eqref{Ext:time:h}, we can observe that
\begin{align}
(L \widetilde{h}_1) (t, y) - (L \widetilde{h}_2) (t,y) = \widetilde{h}_1^\ast(t,y) - \widetilde{h}_2^\ast(t,y) = 0
\end{align}
for $t \notin (0, 2T)$ and $y \in \hat{\Gamma}$.
As a result, it can be deduced from the Neumann series expansion that $\hat{\mathcal{A}}_1(t, \cdot) - \hat{\mathcal{A}}_2(t, \cdot) = 0$ for $t \notin (0, 2T)$.
Hence, by combining Lemma~\ref{Est:Ah:FBC} with the inequality \eqref{ProR2:qp1h}, we can derive the difference estimate for $\mathcal{J}$.

\begin{rem} \label{Depen:T}
It should be emphasized that constants $\varepsilon, B$ in Lemma~\ref{Est:Ah:FBC} are respectively the size restriction for initial height and initial velocity in Theorem~\ref{TLW}.
Moreover, the constant $\mathcal{M}$ in Lemma~\ref{Est:Ah:FBC} is a quadratic polynomial in $B$ with coefficients being independent of $\varepsilon$, $h_0$ and $T$, cf.\ \cite[Section~6]{Sh}.
Hence, from the size condition $\varepsilon + T^{\frac{1}{p'}} \mathcal{M} < (2 C)^{-1}$, we can observe the philosophy that the time for local existence is independent of $h_0$.
\end{rem}
%
%%%%%%

%%%%%%%%%%%%%%%%%%%%%%%%%%%%%%%%%%%%%%%%%%%
\section*{Acknowledgements}

This work was done as a part of research activities of Social Cooperation Program ``Mathematical Science for Refrigerant Thermal Fluids" sponsored by Daikin Industries, Ltd.\ at the University of Tokyo.
 The authors are grateful to members of the Technology and Innovation Center of Daikin Industries, Ltd.\ for showing several interesting phenomena related to collapse of a bubble with fruitful discussion which triggered this work.
 The work of the first author was partly supported by the Japan Society for the Promotion of Science (JSPS) through the grants KAKENHI: JP24K00531, JP24H00183 and by Arithmer Inc., Daikin Industries, Ltd.\ and Ebara Corporation through collaborative grants.
 The work of the second author was partly supported by Daikin Industries, Ltd.\ through collaborative grants.
 This work is mainly developed during the stay of the second author as a postdoc at the Graduate School of Mathematical Sciences at the University of Tokyo.
 Its hospitality is gratefully acknowledged.
 
%
%%%%%%%%%%%%%%%%%%%%%%%%%%%%%%%%%%%%%%%%%%%

\bigskip

\noindent
(Y.~Giga)
{\it Email address}: \href{mailto:labgiga@ms.u-tokyo.ac.jp}{\nolinkurl{labgiga@ms.u-tokyo.ac.jp}}\\
(Z.~Gu)
{\it Email address}: \href{mailto:guzy@szu.edu.cn}{\nolinkurl{guzy@szu.edu.cn}}

\end{document}